\newcommand{\sM}{\mathscr{M}}
\newcommand{\mM}{\mathcal{M}}
\newcommand{\m}{\mathrm{m}}
\newcommand{\qm}{\mathrm{qm}}
\newcommand{\PO}{\mathfrak{PO}}
\newcommand{\C}{\mathbb{C}}
\newcommand{\R}{\mathbb{R}}
\newcommand{\Z}{\mathbb{Z}}
\newcommand{\D}{\mathbb{D}}
\newcommand{\CP}{\mathbb{C}P}
\newcommand{\PN}{(\mathbb{C}P^1)^n}
\newcommand{\Pm}{(\mathbb{C}P^1)^{2m}}
\newcommand{\Ts}{\Theta^n_s}
\newcommand{\Tsm}{\Theta^{2m}_s}
\newcommand{\fL}{\mathfrak{L}}
\newcommand{\id}{\mathrm{Id}}
\newcommand{\im}{\mathrm{im}}
\newcommand{\ev}{\mathrm{ev}}
\newcommand{\hol}{\mathrm{hol}}
\newcommand{\RRe}{\mathrm{Re}}
\newcommand{\Aut}{\mathrm{Aut}}
\newcommand{\Ham}{\mathrm{Ham}}
\newcommand{\OP}{\operatorname}
\newcommand{\indx}{\operatorname{index}}
\newcommand{\del}{\partial}
\newcommand{\be}{\textbf{e}}
\newcommand{\bb}{\mathfrak{b}}
\newcommand{\fs}{\mathfrak{s}}
\newcommand{\subscript}[2]{$#1 _ #2$}
\newcommand{\PxP}{\mathbb{C}P^1 \times \mathbb{C}P^1}
\newcommand{\Blk}{\mathbb{C}P^2\# k\overline{\mathbb{C}P^2}}
\newcommand{\BlIII}{\mathbb{C}P^2\# 3\overline{\mathbb{C}P^2}}
\newcommand{\PxPBlII}{\mathbb{C}P^1 \times \mathbb{C}P^1\# 2\overline{\mathbb{C}P^2}}
\numberwithin{equation}{section}
\newtheorem{thm}{Theorem}[section]
\newtheorem*{thm*}{Theorem}
\newtheorem{prp}[thm]{Proposition}
\newtheorem{lem}[thm]{Lemma}
\newtheorem{cor}[thm]{Corollary}
\newtheorem{cnj}[thm]{Conjecture}
\theoremstyle{definition}
\newtheorem{dfn}[thm]{Definition}
\newtheorem{rmk}[thm]{Remark}
\newtheorem{qu}[thm]{Question}
\newtheorem{ass}[thm]{Assumption}
\begin{document}

\title[Continuum families of non-displaceable Lagrangian tori in $(\CP^1)^{2m}$]{Continuum families of non-displaceable Lagrangian tori in $(\CP^1)^{2m}$}

\author{Renato Vianna}
\footnote{The author was supported by the Herschel Smith postdoctoral fellowship from
the University of Cambridge.}

\begin{abstract} 
  
We construct a family of Lagrangian tori $\Theta^n_s \subset (\CP^1)^n$, $s \in
(0,1)$, where $\Theta^n_{1/2} = \Theta^n$, is the monotone twist Lagrangian
torus described in \cite{ChSch10}. We show that for $n = 2m$ and $s \ge 1/2$
these tori are non-displaceable. Then by considering $\Theta^{k_1}_{s_1} \times
\cdots \times \Theta^{k_l}_{s_l} \times (S^2_{\OP{eq}})^{n - \sum_i k_i} \subset
(\CP^1)^n$, with $s_i \in [1/2,1)$ and $k_i \in 2\Z_{>0}$, $\sum_i k_i \le n$ we
get several $l$-dimensional families of non-displaceable Lagrangian tori. We
also show that there exists partial symplectic quasi-states
$\zeta^{\bb_s}_{\be_s}$ and linearly independent homogeneous Calabi
quasimorphims $\mu^{\bb_s}_{\be_s}$ \cite{FO311b} for which $\Tsm$ are
$\zeta^{\bb_s}_{\be_s}$-superheavy and $\mu^{\bb_s}_{\be_s}$-superheavy. We also
prove a similar result for $(\BlIII, \omega_\epsilon)$, where $\{\omega_\epsilon
;0 < \epsilon < 1\}$ is a family of symplectic forms in $\BlIII$, for which $\omega_{1/2}$
is monotone.

\end{abstract}

\maketitle

\section{Introduction} 

In \cite{FO312}, Fukaya-Oh-Ohta-Ono construct a one-dimensional family of
non-displaceable Lagrangian tori in $(\CP^1)^2$. They arise as fibres of a
(informally called) semi-toric moment map \cite[Section~3]{Wu15}, where the
fibres over the interior of the semi-toric moment polytope are Lagrangian tori,
but over a special vertex of the polytope lies a Lagrangian $S^2$ (the
anti-diagonal) where the semi-toric moment map is not differentiable. 

The weighted barycentre of the semi-toric polytope was proven by
Oakley-Usher \cite{OU13} to be the Chekanov torus \cite{ChSch10} in $(\CP^1)^2$.
The other regular fibres are Hamiltonian isotopic to so called Chekanov type tori 
described in \cite[Example 3.3.1]{Au07}. In fact, the semi-toric Lagrangian 
fibration described in \cite{FO312} can be seen as a limit of almost toric 
fibrations, in which `most of the fibres' are Chekanov type tori, see
\cite[Section~6.4]{Vi16a} and \cite[Remark~3.1]{ToVi15}.

The definition of Chekanov type tori can be easily extended to higher
dimensions, see Definition \ref{def: Chekanov type tori}. In particular, we can
get analogues of the non-displaceable tori \cite{FO312}. We can show that these
tori are non-displaceable in $\Pm$.

\subsection{Results}

 \begin{thm}\label{thm: main}
  
For a positive even integer $n = 2m$, there is a continuum of non-displaceable Lagrangian tori
$\Theta^{2m}_s \subset (\CP^1)^{2m}$, $s \in [1/2, 1)$, for which $\Theta^{2m}_{1/2} =
\Theta^{2m}$ is the monotone twist Lagrangian torus described in \cite{ChSch10}. More 
precisely, for any Hamiltonian $\Psi \in \Ham ((\CP^1)^{2m})$, we have that
$| \Tsm \cap \Psi (\Tsm) | \ge 2^{2m}$.

\end{thm}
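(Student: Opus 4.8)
The plan is to compute a bulk-deformed Lagrangian Floer cohomology of $\Tsm$ and to feed the outcome into the standard non-displaceability and Arnold-type intersection machinery of Fukaya--Oh--Ohta--Ono \cite{FO311b}. It suffices to produce, for each $s \in [1/2,1)$, a bulk class $\bb_s$ on $\Pm$ --- a Novikov-valued cycle supported on the toric divisors together with the additional divisor governing the wall-crossing that defines the Chekanov-type torus --- and a weak bounding cochain $b_s \in H^1(\Tsm; \Lambda_0)$, so that $(\Tsm, b_s)$ is $\bb_s$-weakly unobstructed with
\[
HF\big((\Tsm; \bb_s, b_s), (\Tsm; \bb_s, b_s)\big) \neq 0 .
\]
Nonvanishing already yields non-displaceability. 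For the numerical bound, choose $\Psi \in \Ham(\Pm)$ with $\Psi(\Tsm)$ transverse to $\Tsm$; then the (bulk-deformed) Floer complex $CF((\Tsm; \bb_s, b_s), (\Psi(\Tsm); \Psi_* \bb_s, \Psi_* b_s))$ is freely generated over $\Lambda$ by the points of $\Tsm \cap \Psi(\Tsm)$, its cohomology is isomorphic to $HF((\Tsm; \bb_s, b_s), (\Tsm; \bb_s, b_s))$ by Hamiltonian invariance, and a cohomology group has dimension at most that of its cochain complex; hence $|\Tsm \cap \Psi(\Tsm)| \ge \dim_\Lambda HF(\Tsm, \Tsm)$. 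So it is enough to arrange, in addition, that this Floer cohomology has \emph{full} rank $2^{2m} = \dim_\Lambda H^*(T^{2m}; \Lambda)$; a $C^\infty$-small perturbation of $\Psi$ disposes of the non-transverse case.

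By the FOOO criterion the weak bounding cochains with nonzero bulk-deformed Floer cohomology are precisely the critical points of the bulk-deformed potential function $\PO^{\bb_s}_{\Tsm} \colon H^1(\Tsm; \Lambda_0) \to \Lambda$, and at a \emph{non-degenerate} critical point the Floer cohomology is isomorphic as a $\Lambda$-module to $H^*(T^{2m}; \Lambda)$, hence of rank $2^{2m}$. The problem therefore reduces to: (i) computing $\PO^{\bb_s}_{\Tsm}$; and (ii) showing that for every $s \in [1/2,1)$ one can choose $\bb_s$ so that $\PO^{\bb_s}_{\Tsm}$ has a non-degenerate critical point over $\Lambda$.

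For (i): since $\Tsm$ is a Chekanov-type torus (Definition~\ref{def: Chekanov type tori}) it arises from a toric fibre of $\Pm$ by a wall-crossing/mutation, so its potential can be obtained either from the toric superpotential of $\Pm$ (which, up to normalisation, is $\sum_{i=1}^{2m}(z_i + T z_i^{-1})$) by the associated change of variables together with the wall-crossing correction, following \cite{Au07}, or directly by a toric degeneration argument in the spirit of \cite{FO312}, enumerating the Maslov-index-$2$ holomorphic discs bounded by $\Tsm$ and, for the bulk terms, their intersection numbers with the chosen divisors. Either route produces an explicit Laurent expression in the exponentiated coordinates on $H^1(\Tsm; \Lambda_0)$ whose coefficients are monomials $T^{A_j}$ with exponents $A_j$ depending on $s$, modified by the parameters of $\bb_s$; for $s = 1/2$ the torus is monotone and these exponents all coincide.

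Step (ii) is where the hypotheses enter. Writing out the critical point equations, the parameters of $\bb_s$ can be tuned to reabsorb the area discrepancy produced by $s \neq 1/2$, reducing the equations to a polynomial system over $\Lambda$ of essentially monotone type; the requirement $s \ge 1/2$ is exactly what keeps the corrected coefficients in $\Lambda_0$, so that a root of the system has the correct valuation and is a genuine critical point rather than one ``at infinity''. The parity of $n$ is essential here: the wall-crossing term of the Chekanov-type potential carries a factor whose exponent matches $n = 2m$, and the relevant resultant of the system is nonzero precisely when $n$ is even, so a critical point exists; a dimension count then shows that it can be taken non-degenerate, which by the above forces $\dim_\Lambda HF(\Tsm, \Tsm) = 2^{2m}$. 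I expect the main obstacle to be exactly this last package --- rigorously identifying the disc potential of the non-toric torus $\Tsm$ (justifying the degeneration, treating the bulk contributions, and settling the moduli-space regularity), and then certifying a non-degenerate root of the resulting system for all $s \in [1/2,1)$ and precisely for $n$ even. Once these are in place, non-displaceability and the $2^{2m}$ intersection estimate follow from the standard FOOO formalism.
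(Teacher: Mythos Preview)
Your overall strategy coincides with the paper's: compute the bulk-deformed potential of $\Tsm$, locate a critical point for a suitable bulk $\bb_s$, and invoke the FOOO intersection estimate (Theorem~\ref{thm: FOOOnonDisp}). Two points of execution deserve comment.

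\emph{Full rank without non-degeneracy.} You plan to certify that the critical point is non-degenerate in order to get $\dim_\Lambda HF = 2^{2m}$. The paper bypasses this. Since $H^*(T^{2m})$ is generated as a ring by $H^1$, and $\m_2^{b,\bb}$ deforms the cup product by terms of strictly lower degree (equation~\eqref{eq: cup*}), the Leibniz rule \eqref{eq: Leibniz} propagates $\m_1^{b,\bb}|_{H^1}=0$ to $\m_1^{b,\bb}\equiv 0$. Thus \emph{any} critical point already yields $HF(\Tsm,(b,\bb);\Lambda_{0,nov})\cong H^*(T^{2m};\Lambda_{0,nov})$; this is Lemma~\ref{lem: H^1} and Corollary~\ref{cor: PotFloerHom}. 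Your route via the Hessian would also work (the critical points of Lemma~\ref{lem: Critical Pts} are visibly non-degenerate), but it is an extra computation the paper avoids.

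\emph{The parity and the valuation.} Your account of why $n$ must be even is heuristic; the paper's is arithmetic. One bulks only by the toric divisors $\{y_i=0\}$ (no additional ``wall-crossing divisor'' enters $\bb_s$). The $w_i$-equations force $w_i=\epsilon_i e^{k_iT^\rho/2}$ with $\epsilon_i=\pm1$, and then $u=\epsilon_n e^{k_nT^\rho/2}T^{1/2}\bigl(1+\sum_{i=1}^{n-1}\epsilon_i e^{k_iT^\rho/2}\bigr)$. The valuation of $u$ is $1/2$ unless $1+\sum_{i=1}^{n-1}\epsilon_i=0$, which needs exactly $m$ of the $n-1=2m-1$ signs equal to $-1$ --- possible precisely when $n$ is even (Lemma~\ref{lem: val=1/2}). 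In that case $\mathrm{val}(u)=1/2+\rho$, and since $u=z_\beta=T^s\exp(b\cap\partial\beta)$ one takes $\rho=s-1/2$; the constraint $s\ge 1/2$ is exactly $\rho\ge 0$, i.e.\ $\bb_s\in H^2(\Pm,\Lambda_+)$.
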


The case $n = 2$ was proven in \cite{FO312}. The case $n = 1$ is clearly 
false, since only the monotone circle is non-displaceable. 

\begin{qu}
  For $n \ge 3$ odd and $s \in [1/2, 1)$, are the tori $\Ts$ from Definition \ref{def: 
  Ts} (non)-displaceable?
\end{qu}

\newpage
An immediate consequence of the proof of Theorem \ref{thm: main} is

\begin{cor} \label{cor: ProductTori} 
For $s_i \in [1/2,1)$, and positive even integers $k_i$, $i = 1, \dots, l$, and $n \ge \sum_i k_i$,
the Lagrangian tori

\[ \Theta^{k_1}_{s_1} \times \cdots \times \Theta^{k_l}_{s_l} \times
(S^1_{\OP{eq}})^{n - \sum_i k_i} \subset (\CP^1)^n \]
are non-displaceable.
     
\end{cor}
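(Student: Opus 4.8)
The plan is to deduce Corollary~\ref{cor: ProductTori} directly from the intersection estimate in Theorem~\ref{thm: main}, together with the analogous (classical) fact that the equatorial circle $S^1_{\OP{eq}} \subset \CP^1$ is non-displaceable with $|S^1_{\OP{eq}} \cap \psi(S^1_{\OP{eq}})| \ge 2$ for every $\psi \in \Ham(\CP^1)$. Indeed, each factor $\Theta^{k_i}_{s_i}$ is a Lagrangian torus in $(\CP^1)^{k_i}$ satisfying $|\Theta^{k_i}_{s_i} \cap \Phi(\Theta^{k_i}_{s_i})| \ge 2^{k_i}$ for all $\Phi \in \Ham((\CP^1)^{k_i})$ by Theorem~\ref{thm: main}, and similarly each $S^1_{\OP{eq}}$-factor contributes a factor of $2$. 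The product $L := \Theta^{k_1}_{s_1} \times \cdots \times \Theta^{k_l}_{s_l} \times (S^1_{\OP{eq}})^{n - \sum_i k_i}$ is then a Lagrangian torus in $(\CP^1)^n = (\CP^1)^{k_1} \times \cdots \times (\CP^1)^{k_l} \times (\CP^1)^{n - \sum_i k_i}$, and the claim is that it is non-displaceable, with intersection number at least $2^n$.

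The first step is to recall \emph{why} the intersection estimate is multiplicative under products. The estimate $|L \cap \Phi(L)| \ge 2^{2m}$ in Theorem~\ref{thm: main} is not an elementary statement; in this circle of ideas it comes from a lower bound on the rank of the Floer cohomology $HF((L, \text{loc.\ syst.}), (L, \text{loc.\ syst.}))$ (or, equivalently, from the fact that $L$ is a nonzero object with invertible-enough structure in the relevant Fukaya category, detected by a bulk-deformed potential function $\PO^{\bb}$ having a critical point). The key algebraic input is the Künneth formula for Lagrangian Floer cohomology: for monotone (or bulk-deformed) Lagrangians, $HF(L_0 \times L_0', L_1 \times L_1') \cong HF(L_0, L_1) \otimes HF(L_0', L_1')$, together with the compatibility of bulk deformations and local systems with products. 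Under this isomorphism, if $HF$ of each factor has rank $\ge 2^{k_i}$ (resp.\ $\ge 2$ for the $S^1_{\OP{eq}}$ factors), then $HF$ of the product has rank $\ge 2^n$, and since $\dim_{\R} L = n$ its total Betti number is $2^n$, forcing every Floer differential to vanish, hence non-degenerate intersections between $L$ and a Hamiltonian-transverse pushoff number at least $2^n$. I would therefore state the corollary's proof as: ``This follows from Theorem~\ref{thm: main}, the non-displaceability of $S^1_{\OP{eq}} \subset \CP^1$, and the Künneth formula for Floer cohomology \cite{FO311b}'' and then spell out the rank count.

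The main obstacle is making the Künneth step rigorous in the bulk-deformed setting used to prove Theorem~\ref{thm: main}. The proof of the main theorem presumably equips $\Tsm$ with a bulk class $\bb_s$ and a local system $\be_s$ so that the bulk-deformed potential $\PO^{\bb_s}$ has a critical point, certifying non-vanishing Floer cohomology; to run the product argument one must check that $\bb_{s_1} \oplus \cdots \oplus \bb_{s_l} \oplus 0$ is an admissible bulk class on $(\CP^1)^n$ and that the potential of the product Lagrangian is the sum of the potentials of the factors, so that a tuple of critical points assembles to a critical point of the total potential. This is standard (the superpotential of a product of toric-type fibrations is additive, and bulk deformations supported on distinct factors do not interact to leading order), but it does require that the holomorphic discs contributing to $\PO$ for the product split as discs in a single factor times constant discs in the others — which holds for these monotone/Chekanov-type tori by a positivity/index argument. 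Once this splitting is in place, the corollary is immediate, so I would keep the write-up short: cite the Künneth theorem and the additivity of the potential, note that $S^1_{\OP{eq}}$ is the monotone fibre of $\CP^1$ with a one-critical-point potential, and conclude the $2^n$ bound.

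Finally, I would remark that the families are genuinely $l$-dimensional and pairwise non-Hamiltonian-isotopic for generic parameters, since the critical values of $\PO^{\bb_s}$ (equivalently, the Floer-theoretic "spectral invariants" or the valuations appearing in $HF$) vary continuously and injectively with $(s_1, \dots, s_l)$ — but this is an addendum rather than part of the non-displaceability proof, and can be omitted if the paper establishes it elsewhere.
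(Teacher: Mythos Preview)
Your proposal is correct, and in its third paragraph you in fact land on exactly the paper's argument: the bulk-deformed potential of the product Lagrangian is the \emph{sum} of the bulk-deformed potentials of the factors,
\[
\PO^{\Theta^{k_1}_{s_1} \times \cdots \times \Theta^{k_l}_{s_l} \times (S^1_{\OP{eq}})^{n-\sum k_i}}_{\bb}
= \PO^{\Theta^{k_1}_{s_1}}_{\bb} + \cdots + \PO^{\Theta^{k_l}_{s_l}}_{\bb} + \PO^{(S^1_{\OP{eq}})^{n-\sum k_i}}_{\bb},
\]
so the tuple consisting of the critical points $b_{s_i}$ produced in the proof of Theorem~\ref{thm: main} together with the obvious critical points for the equatorial factors is a critical point of the total potential, and Corollary~\ref{cor: PotFloerHom} gives $HF \cong H^*(T^n;\Lambda_{0,nov})$. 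That one line is literally all the paper records.

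The difference is only in packaging. You lead with a K\"unneth formula for bulk-deformed Floer cohomology and treat the additivity of $\PO$ as the technical lemma needed to justify it; the paper skips K\"unneth entirely and runs the ``critical point $\Rightarrow$ $HF\ne 0$'' machine directly on the product torus. The paper's route is shorter and avoids having to locate or verify a K\"unneth statement in the bulk-deformed setting (which is not quite an off-the-shelf citation). Your route is more conceptual and would generalise more readily to products where one factor's Floer theory is known by means other than a potential computation. Your closing remark about distinguishing tori for varying $(s_1,\dots,s_l)$ is indeed handled separately in the paper and is not needed for the non-displaceability statement.
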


Just by looking to the symplectic area spectrum of Maslov index 2 relative 
homology classes we can conclude:

\begin{prp} \label{prp:toridistinct1}
 The tori $\Ts$ is not symplectomorphic to $\Theta^{k_1}_{s_1} \times \cdots \times \Theta^{k_l}_{s_l} \times
(S^1_{\OP{eq}})^{n - \sum_i k_i}$, if $n > \sum_i k_i$.
\end{prp}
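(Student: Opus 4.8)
The plan is to use, as a symplectomorphism invariant of a Lagrangian torus $L\subset(\CP^1)^n$, the symplectic-area data of its Maslov-index-$2$ relative classes: the coset
\[
\OP{Spec}_2(L)\ :=\ \bigl\{\,\langle[\omega],\beta\rangle\ :\ \beta\in H_2\bigl((\CP^1)^n,L;\Z\bigr),\ \mu(\beta)=2\,\bigr\}\ \subset\ \R
\]
of the period group $P(L):=\langle[\omega],\ker\mu\rangle$, refined by recording which of these values are realised by holomorphic discs and with what multiplicity — equivalently, the disc potential $\PO_L$ up to a monomial change of variables. Any symplectomorphism $\Phi$ of $(\CP^1)^n$ taking $\Ts$ to
\[
T:=\Theta^{k_1}_{s_1}\times\cdots\times\Theta^{k_l}_{s_l}\times(S^1_{\OP{eq}})^{\,n-\sum_i k_i}
\]
induces an isomorphism of the relative $H_2$'s intertwining $\mu$ and $[\omega]$, and preserves, in each matched class, the count of Maslov-$2$ holomorphic discs (an invariant of the pair); hence all of this data is preserved. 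The refinement is genuinely needed, since for $s=\tfrac12$ — and likewise for $T$ with all $s_i=\tfrac12$ — $\OP{Spec}_2$ collapses to the single monotonicity constant.

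I would first read $\PO_{\Ts}$, together with the areas of its monomials (functions of $s$, $n$, $\omega(\CP^1)$), off Definition~\ref{def: Ts} and the disc count recorded there; the relevant feature is that $\Ts$ is a twisted torus, so its Maslov-$2$ discs are entangled across all the $(\CP^1)$-factors and $s$ enters the spectrum ``at level $n$''. For $T$ one instead uses that $\bigl((\CP^1)^n,T\bigr)$ is a product: the relative $H_2$ splits $\mu$- and $[\omega]$-compatibly into the factor pieces, and $\PO_T=\sum_i\PO_{\Theta^{k_i}_{s_i}}+\sum_j(u_j+u_j^{-1})$ in disjoint variable blocks (cf.\ \cite{OU13}), each equatorial block $u_j+u_j^{-1}$ having both monomials of area $\tfrac12\omega(\CP^1)$. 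Since $n>\sum_i k_i$ at least one equatorial block is present, so after a monomial change of variables $\PO_T=Q(x)+u+u^{-1}$ with the variable $u$ absent from $Q$; in particular $P(T)=\sum_i P(\Theta^{k_i}_{s_i})$ and $\tfrac12\omega(\CP^1)\in\OP{Spec}_2(T)$.

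The conclusion would then follow from showing that $\PO_{\Ts}$ admits no such splitting, i.e.\ that for no monomial change of variables is $\PO_{\Ts}$ of the form $Q(x)+u+u^{-1}$ with $u$ absent from $Q$ — equivalently, that the Newton polytope of $\PO_{\Ts}$ is not a bipyramid over such a $Q$ in a complementary coordinate direction. When $s\ne\tfrac12$ one can usually see the obstruction already at the crude level of $\OP{Spec}_2$ and $P$: the $s$-dependence of $\OP{Spec}_2(\Ts)$ sits at level $n$ and cannot be reproduced by a sum of factor contributions with parameters $s_i\in[\tfrac12,1)$ at levels $k_i<n$ together with the equatorial shifts. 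The case $s=\tfrac12$ (hence everything monotone), where the crude invariant is useless, is the main obstacle: it forces one to work with the disc-count refinement and to establish the ``indecomposability'' of $\PO_{\Theta^n_{1/2}}$ by an explicit inspection of its monomials and of its Newton polytope — this is the only step that is not formal.
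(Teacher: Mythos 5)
Your argument for $s\neq 1/2$ is exactly the paper's: the areas of Maslov index $2$ classes of $\Ts$ lie in $s+(1-2s)\Z$, the product torus with an equatorial factor bounds a Maslov $2$ disc of area $1/2$, and $1/2\in s+(1-2s)\Z$ forces $s=1/2$. For $s=1/2$ your strategy (use the symplectomorphism invariance of Maslov $2$ disc counts for monotone Lagrangians and show the potential of $\Theta^n_{1/2}$ cannot be put in the form $Q(x)+u+u^{-1}$) is also the paper's strategy, but you have not actually carried out the decisive step: you explicitly defer the ``indecomposability'' of $\PO_{\Theta^n_{1/2}}$ to an unspecified inspection of monomials and Newton polytopes. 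That is the whole content of the monotone case, so as written the proof is incomplete. The step is genuinely doable, and the paper closes it with a one-line observation that you should extract from Proposition \ref{prp: Poten}: among the classes carrying Maslov $2$ holomorphic discs, count the pairs $(\sigma_1,\sigma_2)$ with $\del\sigma_1=-\del\sigma_2$. For $\Ts$ the classes are $\beta$ and $H_i-\beta-\alpha_i+\alpha_j$, so every such pair has $\del\sigma_i=\pm\del\beta$ (a single antipodal boundary direction mod sign), whereas for the product torus each factor contributes its own antipodal direction, giving at least two. This is precisely the obstruction to splitting off a $u+u^{-1}$ block, stated in a form that requires no Newton-polytope analysis.

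One further caution: you assert in general that a symplectomorphism ``preserves, in each matched class, the count of Maslov-$2$ holomorphic discs (an invariant of the pair).'' For non-monotone tori this is not known and is exactly the paper's Conjecture \ref{cnj:toridistinct} mechanism (invariance of the count of minimal-area discs), announced as forthcoming work; it is a theorem only in the monotone case \cite{ElPo93}. Your logic survives because you only invoke the disc counts at $s=1/2$, but the blanket claim should be restricted to the monotone setting.
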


Consider the counts of holomorphic (for the standard complex structure in
$(\CP^1)^n$) Maslov index 2 disks with boundary in $\Ts$, respectively
$\Theta^{k_1}_{s_1} \times \cdots \times \Theta^{k_l}_{s_l}$ ($n = \sum_i k_i$),
passing through a fixed point. Among these, look at the count of disks that have
minimal area. For $s, s_i \in (1/2,1)$, this area is $a = 1-s$, respectively $1 -
s_i$ for some $i \in \{1,\dots, l\}$. It follows from Proposition \ref{prp:
Poten} that these counts of disks of smaller area are different if $l > 1$.
Moreover, we show in Proposition \ref{prp:HighMaslovArea} that higher Maslov
index holomorphic disks with boundary on $\Ts$ must have symplectic area bigger
than $a$.
Hence, one expect that in a generic family $J_t$ of almost complex structures,
where $J_0$ is the standard complex structure and $J_1$ is another regular
almost complex structure, $J_t$-holomorphic disks of positive Maslov index and
area smaller than $a$ can only appear in a ``birth-death'' phenomenon. This
should imply that the count of Maslov index 2 disks of symplectic area $a$ with
boundary in $\Ts$ is an invariant under generic choice of almost complex
structure, and hence under symplectomorphisms (in particular Hamiltonian
isotopies) acting on $\Ts$. This would allow us to prove:

\begin{cnj} \label{cnj:toridistinct}
   The tori $\Ts$ is not symplectomorphic to $\Theta^{k_1}_{s_1} \times \cdots \times 
   \Theta^{k_l}_{s_l}$, $n = \sum_i k_i$ -- unless $l=1$ and $s_1 = s$.
\end{cnj}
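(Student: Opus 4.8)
The plan is to promote the numerical observation recorded in the paragraph preceding the conjecture into a genuine symplectic invariant. Fix the point constraint and, for a compatible almost complex structure $J$ on $(\CP^1)^n$, let $N_J(a)$ denote the count (with sign, or appropriately weighted) of $J$-holomorphic Maslov index $2$ disks of symplectic area exactly $a = \min(1-s, \min_i (1-s_i))$ with boundary on the torus in question, passing through the fixed point. First I would set up the relevant moduli spaces of such disks and establish the two inputs already isolated in the text: by Proposition \ref{prp:HighMaslovArea} every holomorphic disk of Maslov index $\ge 4$ has area strictly larger than $a$, and by Propositions \ref{prp: Poten} and \ref{prp:toridistinct1} the open Gromov--Witten data (in particular the minimal-area part of the disk potential) distinguishes $\Ts$ from the product tori when $l>1$, and distinguishes $\Theta^{k_1}_{s_1}\times\cdots$ from $\Theta^{k'_1}_{s'_1}\times\cdots$ whenever the multiset $\{s_i\}$ differs from $\{s'_i\}$. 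Thus it suffices to prove that $N_J(a)$ is independent of the generic choice of $J$ and of the Lagrangian within its symplectomorphism class.

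Next I would prove the invariance of $N_J(a)$ under a generic path $J_t$, $t\in[0,1]$, of compatible almost complex structures. The key point is an energy/area cutoff argument: in a one-parameter family the moduli space of area-$a$ Maslov-$2$ disks is a compact $1$-manifold whose boundary consists of the fibres over $t=0,1$ together with possible bubbling/degeneration configurations. One must rule out boundary contributions from stable maps of total area $a$ and total Maslov index $2$: any such configuration would split the area $a$ among components of total Maslov index $2$, but each non-constant disk component has positive area, and by monotonicity together with the area-$a$ minimality (here one uses that $a$ is the \emph{smallest} area occurring among Maslov $\ge 2$ disks, plus the classification of small-area classes on $(\CP^1)^n$) no proper splitting is possible; sphere bubbles have area $\ge$ the minimal Chern number contribution and can be excluded similarly, or absorbed by choosing $J_t$ generic so that the relevant sphere moduli have negative expected dimension. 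This gives a well-defined cobordism, hence $N_{J_0}(a)=N_{J_1}(a)$. Applying this with $J_1 = \psi^* J_0$ for a symplectomorphism $\psi$ carrying one torus to another shows $N(a)$ is a symplectomorphism invariant of the Lagrangian, and in particular a Hamiltonian-isotopy invariant.

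Finally I would combine the invariance with the explicit computation of $N(a)$ for the standard complex structure: for $\Ts$ the area-$a$ disks are those detected in the analysis behind Proposition \ref{prp:HighMaslovArea}, giving one value, while for the product $\Theta^{k_1}_{s_1}\times\cdots\times\Theta^{k_l}_{s_l}$ a minimal-area disk is a minimal-area disk in one factor $\Theta^{k_i}_{s_i}$ times a constant in the others, so the count records (via Proposition \ref{prp: Poten}) the number of factors realizing the minimum and the structure of that factor's potential. If the two tori were symplectomorphic these data would have to agree, which forces $l=1$ and $s_1=s$ by Propositions \ref{prp:toridistinct1} and \ref{prp: Poten}. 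The main obstacle is the compactness/transversality step in the second paragraph: one must genuinely control \emph{all} degenerations of area exactly $a$, including configurations with constant ghost components and with sphere bubbles in the ambient $(\CP^1)^n$, and ensure that a generic $J_t$ makes the count well defined over $\Z$ (or $\Z/2$) despite the Lagrangian tori being only monotone in the case $s=1/2$ and merely having a favorable area spectrum for $s\in(1/2,1)$; handling the non-monotone range $s\in(1/2,1)$ rigorously — where one cannot appeal to standard Lagrangian Floer package off the shelf — is exactly why the statement is left as a conjecture rather than a theorem.
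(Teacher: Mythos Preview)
Your proposal is not a proof but a heuristic sketch, and it coincides exactly with the informal argument the paper itself gives for why the conjecture should hold. The paper does \emph{not} prove Conjecture~\ref{cnj:toridistinct}; it explicitly leaves it open, remarking that a rigorous proof of the invariance of the minimal-area Maslov-$2$ disk count ``is expected to appear in the forthcoming working of the author together with Egor Shelukhin and Dmitry Tonkonog.'' The paper's own discussion (the paragraph before the conjecture and the end of Section~\ref{sec: LagTori}) already isolates the same two ingredients you do: Proposition~\ref{prp:HighMaslovArea} to control higher-Maslov disks, and the explicit counts $n^2$ versus at most $\sum_i k_i^2$ from Proposition~\ref{prp: Poten}.

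The gap you identify in your final paragraph --- controlling all degenerations of area exactly $a$ in a generic one-parameter family when the torus is not monotone --- is precisely the gap the paper acknowledges and does not fill. In particular, your cobordism argument in the second paragraph is not justified: for $s\in(1/2,1)$ you have no a priori reason why, at some $t\in(0,1)$, a $J_t$-holomorphic disk of Maslov index $0$ or negative Maslov index and small positive area cannot appear and bubble off, since Assumption~\ref{ass: ass}\ref{ass: A1} is only verified for the standard $J_0$ and need not persist along the path. Your appeal to ``monotonicity together with the area-$a$ minimality'' is circular here, since the tori are not monotone for $s\ne 1/2$. So your proposal correctly reproduces the paper's strategy and correctly names the obstruction, but does not advance beyond it; it is not a proof of the conjecture. (Note that the monotone case $s=1/2$ \emph{is} established in the paper, in the proof of Proposition~\ref{prp:toridistinct1} and the subsequent remark.)
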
 

A rigorous statement proving the invariance of the count of the Maslov index 2 disks of minimal
area in the above scenario and hence Conjecture \ref{cnj:toridistinct} is 
expected to appear in the forthcoming working of the author together with 
Egor Shelukhin and Dmitry Tonkonog.



Therefore we see that -- up to a formal proof of Conjecture 
\ref{cnj:toridistinct} -- the tori obtained here differ from products of copies of the tori
obtained in \cite{FO312} and copies of the equator in $\CP^1$.









The idea of the proof of Theorem \ref{thm: main} is that we are
able to find bulk deformations $\bb_s$ for which the bulk deformed Floer
Homology of $\Tsm$ (decorated with some weakly bounding cochain $\sigma$) is
non-zero. The invariance property of the bulk deformed Floer Cohomology under
the action of Hamiltonian diffeomorphisms \cite[Theorem~2.5]{FO311a},
allow us to conclude that the above Lagrangian tori are non-displaceable. 

 Based on the work of Fukaya-Oh-Ohta-Ono \cite{FO311b}, regarding spectral
 invariants with bulk deformations, quasimorphisms and Lagrangian Floer theory,
 we are able to strengthen our result and find families of homogeneous Calabi
 quasimorphisms $\mu^{\bb_s}_{\be_s}$ and partial symplectic quasi-states
 $\zeta^{\bb_s}_{\be_s}$, for which $\Tsm$ is $\mu^{\bb_s}_{\be_s}$-superheavy and
 $\zeta^{\bb_s}_{\be_s}$-superheavy. 
 
 For the definition of homogeneous Calabi quasimorphisms, partial symplectic quasi-states and
 the notion of superheaviness we refer the reader to \cite{EP03, EP06,
 FO311b}. 
 
 
 Following closely the notation of \cite[Lemma~23.3, Theorem~23.4]{FO311b}
 we summarise the above discussion as:

 \newpage
 
 \begin{thm} \label{thm: heavy}
  
For $s \in [1/2,1)$, there exists a bulk-deformation $\bb_s \in
H^2(\Pm,\Lambda_{+})$, and a weak bounding cochain $b_s \in H^1(\Tsm,
\Lambda_0)$ for which

\[ HF(\Tsm, (\bb_s, b_s); \Lambda_{0,nov}) \cong H^*(\Tsm; \Lambda_{0,nov})\]

Moreover, there are idempotents $\be_s$ in the bulk-deformed quantum-cohomology
$QH_{\bb_s}^*(\Pm; \Lambda_{0,nov})$, so that $\Tsm$ is $\mu^{\bb_s}_{\be_s}$-superheavy and
$\zeta^{\bb_s}_{\be_s}$-superheavy. Here $\mu^{\bb_s}_{\be_s}$,
$\zeta^{\bb_s}_{\be_s}$ are respectively the homogeneous Calabi quasimorphism and partial
symplectic quasi-states coming from the bulk-deformed spectral invariant
associated with $\be_s$ \cite[Section~14]{FO311b}.
 
 \end{thm}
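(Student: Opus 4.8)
The strategy is to harvest from the proof of Theorem~\ref{thm: main} the bulk data and the critical point of the bulk-deformed potential, and then feed these into the Fukaya--Oh--Ohta--Ono theory of spectral invariants with bulk \cite{FO311b}. First, recall that the proof of Theorem~\ref{thm: main} already produces a bulk class $\bb_s \in H^2(\Pm, \Lambda_+)$ and a weak bounding cochain $b_s \in H^1(\Tsm, \Lambda_0)$, realized as a nondegenerate critical point of the bulk-deformed potential function $\PO^{\bb_s}$ of $\Tsm$ (computed via Proposition~\ref{prp: Poten}), for which the bulk-deformed Floer cohomology of $\Tsm$ is non-zero. Since the Floer cohomology of a torus at a nondegenerate critical point of its potential is the full cohomology of the torus (cf.\ \cite[Theorem~2.5]{FO311a}), this gives the first asserted isomorphism $HF(\Tsm,(\bb_s,b_s);\Lambda_{0,nov}) \cong H^*(\Tsm;\Lambda_{0,nov})$. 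What remains is to produce the idempotent $\be_s$ and to deduce superheaviness.

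Next I would invoke the bulk-deformed closed--open map $\mathfrak{i}^*_{(\bb_s,b_s)} \colon QH^*_{\bb_s}(\Pm;\Lambda_{nov}) \to HF(\Tsm,(\bb_s,b_s);\Lambda_{nov})$ (with $\Lambda_{nov}$ the Novikov field), which is a unital ring homomorphism \cite[Section~14]{FO311b}. Because $\Pm = (\CP^1)^{2m}$ and $QH^*(\CP^1;\Lambda_{nov})$ splits as a product of two fields, $QH^*(\Pm;\Lambda_{nov})$ is semisimple; deforming by the positive-valuation class $\bb_s$ cannot change the number of idempotents, so $QH^*_{\bb_s}(\Pm;\Lambda_{nov}) \cong \prod_i F_i$ with each $F_i$ a field and with orthogonal idempotents $\be_i$ summing to $1$. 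Applying $\mathfrak{i}^*_{(\bb_s,b_s)}$ to $1 = \sum_i \be_i$ and using $HF(\Tsm,(\bb_s,b_s)) \neq 0$, at least one $\mathfrak{i}^*_{(\bb_s,b_s)}(\be_i)$ is non-zero; take $\be_s$ to be such a $\be_i$. Since $QH^*_{\bb_s}(\Pm;\Lambda_{nov})\be_s = F_i$ is a field, the associated bulk-deformed spectral invariant $\rho^{\bb_s}_{\be_s}$ of \cite[Section~14]{FO311b} yields a genuine homogeneous Calabi quasimorphism $\mu^{\bb_s}_{\be_s}$ and a partial symplectic quasi-state $\zeta^{\bb_s}_{\be_s}$.

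Finally, with $\be_s$ chosen so that $\mathfrak{i}^*_{(\bb_s,b_s)}(\be_s)\neq 0$, the superheaviness of $\Tsm$ with respect to both $\mu^{\bb_s}_{\be_s}$ and $\zeta^{\bb_s}_{\be_s}$ is exactly the conclusion of \cite[Lemma~23.3, Theorem~23.4]{FO311b}, whose hypotheses --- $\Tsm$ relatively spin (automatic for a torus), $HF(\Tsm,(\bb_s,b_s))\neq 0$, and $\be_s$ an idempotent not annihilated by the closed--open map --- have all been verified. I expect the genuine work to lie in two places: the inputs to the first step, i.e.\ the construction of the nondegenerate critical point of $\PO^{\bb_s}$, which is where the combinatorics of the Chekanov-type tori $\Tsm$, their Maslov index $2$ disk counts, and the precise choice of $\bb_s$ enter (carried out in the proof of Theorem~\ref{thm: main}); and the bookkeeping in the second step --- checking that $\bb_s$ can be represented in $H^2(\Pm,\Lambda_+)$ with $QH^*_{\bb_s}(\Pm;\Lambda_{nov})$ still a product of fields, and that $\mathfrak{i}^*_{(\bb_s,b_s)}$ converges and is a unital ring homomorphism --- which is exactly where the Kuranishi-structure machinery of \cite{FO311b} is needed. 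The linear independence of the family $\{\mu^{\bb_s}_{\be_s}\}$ asserted in the introduction then follows by evaluating these quasimorphisms on Hamiltonians that detect the distinct minimal disk areas $1-s$, so that no non-trivial finite linear combination of them vanishes.
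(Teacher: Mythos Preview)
Your proposal is correct and follows essentially the same route as the paper: establish $HF \cong H^*$ from a critical point of the bulk-deformed potential (this is Theorem~\ref{thm: FloerHomTsm}, via Corollary~\ref{cor: PotFloerHom} and Corollary~\ref{cor: CritcPotTsm}), find an idempotent not killed by the closed--open map (Proposition~\ref{prp: idempotent}), and invoke the FOOO superheaviness criterion (the paper cites \cite[Theorem~18.8]{FO311b}).

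A few small divergences are worth flagging. First, for semisimplicity of $QH_{\bb_s}^*(\Pm)$ you argue by stability of semisimplicity under a positive-valuation deformation; the paper instead uses the toric mirror isomorphism of \cite{FO316,FO310} to identify $QH_\bb$ with the Jacobian ring of the \emph{toric} bulk-deformed potential \eqref{eq: toricBulkPot}, and checks directly that its $2^{2m}$ critical points are nondegenerate (Lemma~\ref{lem: QuantCohom}). Your deformation argument is morally right but would need a line of justification; the paper's is self-contained. Second, the paper neither claims nor needs \emph{nondegeneracy} of the critical point $b_s$ of $\PO_{\bb_s}^{\Tsm}$ --- Corollary~\ref{cor: PotFloerHom} only requires a critical point --- and your citation of \cite[Theorem~2.5]{FO311a} for the $HF\cong H^*$ step is off (that theorem is the non-displaceability statement, here Theorem~\ref{thm: FOOOnonDisp}; the relevant input is \cite[Theorem~2.3]{FO312}, here Corollary~\ref{cor: PotFloerHom}). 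Finally, your closing remark on linear independence (which is Corollary~\ref{cor: linear indep. quasimorph}, not part of Theorem~\ref{thm: heavy}) via ``Hamiltonians detecting minimal disk areas'' is not the paper's mechanism: the paper uses that the tori $\Tsm$ for distinct $s$ are pairwise \emph{disjoint}, so superheaviness with respect to $\mu^{\bb_s}_{\be_s}$ combined with \cite[Corollary~1.10]{FO311b} gives linear independence directly.
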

 
Here $\Lambda$, $\Lambda_0$, $\Lambda_{nov}$, $\Lambda_{0,nov}$ and $\Lambda_{+}$ are the Novikov rings:

\[\Lambda = \left\{ \sum_{i \ge 0} a_iT^{\lambda_i} | \, \, a_i \in \C, \lambda_i \in
\R, \lambda_i \le \lambda_{i +1}, \lim_{i \to \infty} \lambda_i = \infty \right\}, \]

\[\Lambda_0 = \left\{ \sum_{i \ge 0} a_iT^{\lambda_i} | \, \, a_i \in \C, \lambda_i \in
\R_{\ge 0}, \lambda_i \le \lambda_{i +1}, \lim_{i \to \infty} \lambda_i = \infty \right\}, \]

\[\Lambda_{nov} = \left\{ \sum_{i \ge 0} a_i q^{n_i} T^{\lambda_i} | \, \, n_i \in \Z \, \, a_i \in \C, \lambda_i \in
\R, \lambda_i \le \lambda_{i +1}, \lim_{i \to \infty} \lambda_i = \infty \right\}, \]

\[\Lambda_{0,nov} = \left\{ \sum_{i \ge 0} a_i q^{n_i} T^{\lambda_i} | \, \, n_i \in \Z \, \, a_i \in \C, \lambda_i \in
\R_{\ge 0}, \lambda_i \le \lambda_{i +1}, \lim_{i \to \infty} \lambda_i = \infty \right\}, \]

\[\Lambda_+ = \left\{ \sum_{i \ge 0} a_iT^{\lambda_i} | \, \, a_i \in \C, \lambda_i \in
\R_{> 0}, \lambda_i \le \lambda_{i +1}, \lim_{i \to \infty} \lambda_i = \infty \right\}, \]

The formal parameter $T$ is used to keep track of area of pseudo-holomorphic disks, 
while the formal parameter $q \in \Lambda_{0,nov}$ is used to keep track of the Maslov index.

The following Corollary follows immediately from \cite[Corollary~1.10]{FO311b},
see \cite[Section~19]{FO311b} for a proof.

\begin{cor} \label{cor: linear indep. quasimorph}
  The uncountable set $\{ \mu^{\bb_s}_{\be_s} \}$ of homogeneous Calabi 
  quasimorphisms is linearly independent \cite[Definition~1.9]{FO311b}.
\end{cor}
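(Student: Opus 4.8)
The plan is to deduce the statement from the general linear-independence criterion of Fukaya--Oh--Ohta--Ono, \cite[Corollary~1.10]{FO311b} (whose proof occupies \cite[Section~19]{FO311b}); its hypotheses are furnished by Theorem~\ref{thm: heavy} together with the elementary geometry of the family $\{\Tsm\}_{s\in[1/2,1)}$. Since a linear relation among elements of a vector space involves only finitely many terms, it is enough to fix $s_1<\dots<s_N$ in $[1/2,1)$ and to prove that $\mu_1:=\mu^{\bb_{s_1}}_{\be_{s_1}},\dots,\mu_N:=\mu^{\bb_{s_N}}_{\be_{s_N}}$ are linearly independent as homogeneous Calabi quasimorphisms on $\Ham((\CP^1)^{2m})$ (as $(\CP^1)^{2m}$ is closed, $\Ham$ is perfect, so this is ordinary linear independence, matching \cite[Definition~1.9]{FO311b}).

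The one genuinely geometric ingredient I would use is that the tori $\Theta^{2m}_s$, for distinct $s\in[1/2,1)$, are pairwise disjoint --- this follows from the construction in Definition~\ref{def: Ts}, and is the direct higher-dimensional analogue of the $m=1$ situation of \cite{FO312}, where the corresponding tori are distinct fibres of a semi-toric fibration. One may then pick pairwise disjoint open sets $U_i\supset\Theta^{2m}_{s_i}$. By Theorem~\ref{thm: heavy} each $\Theta^{2m}_{s_i}$ is $\zeta_i$-superheavy, where $\zeta_i:=\zeta^{\bb_{s_i}}_{\be_{s_i}}$, and a superheavy subset is heavy (see \cite{FO311b}; cf.\ \cite{EP06}); hence $\inf_{\Theta^{2m}_{s_i}}H\le\zeta_i(H)\le\sup_{\Theta^{2m}_{s_i}}H$ for every $H\in C^\infty((\CP^1)^{2m})$, so $\zeta_i(H)=c$ as soon as $H\equiv c$ on a neighbourhood of $\Theta^{2m}_{s_i}$.

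Next, given an arbitrary vector $(v_1,\dots,v_N)\in\R^N$, I would choose a time-independent Hamiltonian $H$ with $H\equiv v_i$ near $\Theta^{2m}_{s_i}$ for each $i$ and normalised by $\int_{(\CP^1)^{2m}}H\,\omega^{2m}=0$; the normalisation costs nothing, being arranged on $(\CP^1)^{2m}\setminus\bigcup_iU_i$, which has positive volume. Then $\zeta_i(H)=v_i$ for all $i$ at once, since this value depends only on $H|_{\Theta^{2m}_{s_i}}$. Invoking the standard dictionary between the partial quasi-state and the quasimorphism, \cite[Section~14]{FO311b} (cf.\ \cite{EP03}) --- for a normalised autonomous $H$ one has $\mu_i(\phi^1_H)=-\vol((\CP^1)^{2m})\,\zeta_i(H)$, the overall sign being irrelevant --- we conclude: if $\sum_{i=1}^N c_i\mu_i\equiv0$, then evaluating at $\phi^1_H$ gives $0=-\vol((\CP^1)^{2m})\sum_{i=1}^N c_iv_i$, and since $(v_1,\dots,v_N)$ was arbitrary, every $c_i$ vanishes. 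This yields linear independence of each finite subfamily, hence of the whole uncountable family.

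The only point that is not mere bookkeeping is the claim that the $\Theta^{2m}_s$ are pairwise disjoint, which is precisely what makes the interpolation in the previous paragraph possible; it has to be read off the construction in Definition~\ref{def: Ts} (and is the analogue of the corresponding statement for \cite{FO312}). Everything else is routine verification that the \cite{FO311b} framework applies: that $(\CP^1)^{2m}$ with the bulk classes $\bb_s\in H^2((\CP^1)^{2m};\Lambda_+)$ fits their setup, that the quasi-state--quasimorphism relation and the implication ``superheavy $\Rightarrow$ heavy'' persist in the bulk-deformed setting, and that $\mu^{\bb_s}_{\be_s}$ descends to $\Ham((\CP^1)^{2m})$ (if not, the argument runs identically on $\widetilde{\Ham}$). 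With Theorem~\ref{thm: heavy} in hand, the corollary is then immediate from \cite[Corollary~1.10]{FO311b}.
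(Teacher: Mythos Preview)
Your proposal is correct and follows exactly the paper's approach: invoke \cite[Corollary~1.10]{FO311b} (with the argument of \cite[Section~19]{FO311b}), using as the sole geometric input that the $\Theta^{2m}_s$ are pairwise disjoint for distinct $s$, which indeed follows from Definition~\ref{def: Ts} since the curves $\gamma_s$ foliate $\C\setminus\R_{\le 0}$. Your write-up simply unpacks the content of \cite[Section~19]{FO311b} in more detail than the paper does, but the strategy is identical.
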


To prove linear independency of the above homogeneous Calabi 
quasimorphisms we use that the tori are disjoint, for different values of $s$.
One could ask:

\begin{qu}
  Are the tori $\Ts$ Hamiltonian displaceable from $\Theta^{k_1}_{s_1} \times \cdots \times \Theta^{k_l}_{s_l} \times
(S^1_{\OP{eq}})^{n - \sum_i k_i}$, for $s,s_i \in (1/2,1)$? 
\end{qu}

We note that by construction, these tori intersect for $s, s_i \ge 1/2$. 
See \cite{ToVi15}, for non-displaceability in the case $n = 2$, between $\Ts$ (i.e. tori from 
\cite{FO312}) $s \ge 3/2$ and the Clifford torus $S^1_{\OP{eq}} \times 
S^1_{\OP{eq}}$.

\begin{qu}
 Are the quasimorphisms arising from (particular choice of
 bulk-deformation and weak-bounding cochain for) the tori in 
Corollary \ref{cor: ProductTori} linearly independent for different
partitions $(k_1, \dots, k_l, n - \sum_i k_i)$ of $n$? 
\end{qu}

We finish our results by pointing out that the family given in \cite{FO312}
remain non-displaceable after we perform two blowups (of the same size) on the
rank zero corners of the singular fibration described in \cite{FO312}, see Figure
\ref{fig: Bl3}. This follows from applying the same ideas as Fukaya-Oh-Ohta-Ono
did for the $\PxP$ case. 

\begin{thm} \label{thm: Bl3}
  There exists a continuous family of non-displaceable Lagrangian tori $L_s^\epsilon$ in $(\BlIII, 
  \omega_\epsilon) = (\PxPBlII, \omega_\epsilon)$, where $s \in [1/2,1)$ and $\{ \omega_\epsilon | 0 < \epsilon < 
  1\}$ is a family of symplectic forms for which $(\BlIII, 
  \omega_{1/2})$ is monotone, containing a monotone Lagrangian $L_{1/2}^{1/2}$. 
  
\end{thm}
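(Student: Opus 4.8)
The plan is to adapt the bulk-deformation argument behind Theorem \ref{thm: main} (equivalently Theorem \ref{thm: heavy}) to the blown-up setting, following the template Fukaya-Oh-Ohta-Ono established for $\PxP$. First I would set up the almost toric picture: starting from the semi-toric fibration on $\PxP$ from \cite{FO312} whose singular fibre over a special vertex is the Lagrangian antidiagonal $S^2$, I perform two symplectic blowups of equal size $\epsilon$ at the two rank-zero corners of the moment polytope (the two ``toric'' vertices, not the semi-toric one), see Figure \ref{fig: Bl3}. This produces $\PxPBlII \cong \BlIII$ with a one-parameter family of symplectic forms $\omega_\epsilon$, monotone at $\epsilon = 1/2$; the blowups do not touch the singular fibre, so the family $L_s^\epsilon$, $s \in [1/2,1)$, of Chekanov-type tori survives, with $L_{1/2}^{1/2}$ the monotone torus through the weighted barycentre.

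Next I would compute the potential function with bulk deformation. The Lagrangians $L_s^\epsilon$ bound Maslov index 2 holomorphic disks coming from the toric boundary divisors (the proper transforms of the original divisors and the two exceptional divisors), plus the ``extra'' Chekanov-type disks $\alpha, \beta$ of area $s$ and $1-s$ familiar from the $\PxP$ analysis; the contributions are tracked by the parameter $T$. I would then introduce a bulk class $\bb_s^\epsilon \in H^2(\BlIII, \Lambda_+)$ supported on the (two exceptional plus the relevant toric) divisors and solve, as in \cite[Sections 23--25]{FO311b}, for $\bb_s^\epsilon$ and a weak bounding cochain $b_s^\epsilon \in H^1(L_s^\epsilon, \Lambda_0)$ making $L_s^\epsilon$ a critical point of the bulk-deformed potential $\mathfrak{PO}^{\bb_s^\epsilon}$ with the required nondegeneracy; this yields $HF(L_s^\epsilon,(\bb_s^\epsilon,b_s^\epsilon);\Lambda_{0,nov}) \cong H^*(L_s^\epsilon;\Lambda_{0,nov}) \neq 0$, and non-displaceability follows from Hamiltonian invariance of bulk-deformed Floer cohomology \cite[Theorem~2.5]{FO311a}. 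The superheaviness statement and the construction of $\be_s^\epsilon \in QH^*_{\bb_s^\epsilon}$ would then be obtained exactly as in Theorem \ref{thm: heavy}, via \cite[Lemma~23.3, Theorem~23.4]{FO311b}.

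The main obstacle I expect is bookkeeping the disk count after blowup: one must verify that the proper transforms of the original Maslov index 2 disks, together with the new exceptional-divisor disks, account for \emph{all} Maslov index 2 disks (no new low-area disks sneak in once $\epsilon$ is turned on), and that the areas are the affine functions of $(s,\epsilon)$ predicted by the moment polytope of $\omega_\epsilon$. This is where one genuinely uses that the blowup sizes are equal and small, and that the blown-up points lie on toric (not semi-toric) corners, so standard toric degeneration / open Gromov-Witten arguments in the spirit of \cite{FO312} apply and the semi-toric fibre is unaffected. A secondary technical point is checking that the bulk-balancing system for $\bb_s^\epsilon$ remains solvable over $\Lambda_+$ for the whole range $s \in [1/2,1)$ and $0 < \epsilon < 1$ — but since the $\epsilon = 0$ (i.e.\ $\PxP$) case is solvable by \cite{FO312} and the equations depend continuously (indeed algebraically) on $\epsilon$, this should go through without essential difficulty.
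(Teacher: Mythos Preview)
Your proposal is correct and follows essentially the same route as the paper: construct $L_s^\epsilon$ by blowing up the two rank-zero corners of the semi-toric fibration of $\PxP$, compute the bulk-deformed potential, locate a critical point, and invoke Corollary~\ref{cor: PotFloerHom} together with \cite[Theorem~2.5]{FO311a}.

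Two small points where the paper's execution differs from your sketch. First, rather than ``toric degeneration / open Gromov--Witten arguments'', the paper controls the disk count after blowup by a finite neck-stretch along the boundaries of the blowup balls: one pulls back the standard complex structure $j_\delta$ from a small blowup via a diffeomorphism $\varphi$ (equivalently, inflates along $E_1, E_2$), so that the old Maslov-$2$ disks survive verbatim and the only \emph{new} Maslov-$2$ disks are the two fibre disks in classes $H_i - E_i \pm \alpha$, each of area $1-\epsilon$; positivity of intersection with the proper transform $\tilde D + E_1 + E_2$ (which is half the Maslov class) rules out anything else. This gives the explicit potential $\PO^{L_s^\epsilon} = u + \tfrac{T}{u}(1+w)(1+\tfrac{1}{w}) + T^{1-\epsilon}(w + \tfrac{1}{w})$. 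Second, the bulk class used is simpler than you anticipate---a single cocycle $\mathfrak s$ Poincar\'e dual to $\{y_1=0\}\cup E_1$ suffices---and the resulting critical point equations factor so that the $T^{1-\epsilon}$ term drops out entirely; the critical values $w = -e^{-T^\rho/2}$, $u = \pm T^{1/2}(1-e^{-T^\rho/2})^{1/2}(e^{T^\rho}-e^{T^\rho/2})^{1/2}$ are independent of $\epsilon$, so your ``secondary technical point'' about solvability in $\epsilon$ never arises.
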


\begin{rmk} \label{rmk: Blk}
 It is shown in \cite[Section~5]{FO311a} and \cite[Section~22]{FO311b} a family
of non-displaceable Lagrangian tori in $\Blk$, $k \ge 2$, endowed with some 
\emph{non-monotone} symplectic form.  
\end{rmk}
 
Theorem \ref{thm: Bl3} follows, in the same spirit as \cite[Theorem~1.11]{FO311b} and 
Theorem \ref{thm: heavy}, from:

 \begin{thm} \label{thm: Bl3heavy}

Let $(\BlIII, \omega_\epsilon)$ and $L_s^\epsilon$ be as in Theorem \ref{thm: Bl3}.
For $s \in [1/2,1)$, there exists a bulk-deformation $\bb_s^\epsilon \in
H^2(\BlIII,\Lambda_{+})$, and a weak bounding cochain $b_s^\epsilon \in H^1(L_s^\epsilon,
\Lambda_0)$ for which

\[ HF(\BlIII, (\bb_s^\epsilon, b_s^\epsilon); \Lambda_{0,nov}) \cong H^*(L_s^\epsilon; \Lambda_{0,nov})\]

There are idempotents $\be_s^\epsilon$ in the bulk-deformed quantum-cohomology $QH(\BlIII;
\Lambda)$, so that $L_s^\epsilon$ is $\mu^{\bb_s^\epsilon}_{\be_s^\epsilon}$-superheavy and
$\zeta^{\bb_s^\epsilon}_{\be_s^\epsilon}$-superheavy, where $\mu^{\bb_s^\epsilon}_{\be_s^\epsilon}$,
$\zeta^{\bb_s^\epsilon}_{\be_s^\epsilon}$ are the homogeneous Calabi quasimorphism
and partial symplectic quasi-states coming from the bulk-deformed spectral
invariant associated with $\be_s^\epsilon$ \cite[Section~14]{FO311b}. Moreover, the
uncountable set $\{ \mu^{\bb_s^\epsilon}_{\be_s^\epsilon} \}$ of homogeneous Calabi quasimorphisms
is linearly independent.
 
 \end{thm}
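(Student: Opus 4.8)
The plan is to mirror the strategy used for Theorem~\ref{thm: heavy}, adapting the Fukaya-Oh-Ohta-Ono machinery for toric (and near-toric) manifolds to the blowup $\BlIII = \PxPBlII$. First I would set up the almost-toric picture: realize $(\BlIII,\omega_\epsilon)$ together with the fibration carrying the tori $L_s^\epsilon$ as the blowup of the singular Lagrangian fibration of \cite{FO312} on $(\CP^1)^2$ at the two rank-zero corners, of equal size $\epsilon$, as indicated in Figure~\ref{fig: Bl3}. The key point is that after these two blowups one still has a Lagrangian fibration whose generic fibres are tori, with one singular (nodal) fibre containing a Lagrangian sphere, exactly as in the base case; the monotone member $L_{1/2}^{1/2}$ sits over the weighted barycentre. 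I would then record the potential function $\mathfrak{PO}$ with bulk deformation for this fibration, computing the relevant Maslov-index-$2$ holomorphic-disk counts: the four ``toric'' disk classes coming from the $(\CP^1)^2$-directions, the two extra classes created by each exceptional divisor, and the ``Chekanov-type'' disk(s) bounded by the sphere in the nodal fibre. This is routine given the explicit almost-toric model and the computations of \cite{FO312, FO311a}.

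The heart of the argument is the same balancing/criticality computation as in Theorem~\ref{thm: heavy}: for each $s \in [1/2,1)$ one must exhibit a bulk class $\bb_s^\epsilon \in H^2(\BlIII,\Lambda_+)$ and a weak bounding cochain $b_s^\epsilon \in H^1(L_s^\epsilon,\Lambda_0)$ such that the bulk-deformed potential $\mathfrak{PO}^{\bb_s^\epsilon}(b_s^\epsilon,\cdot)$ has a critical point with nonvanishing Hessian, which by \cite[Theorem~2.5]{FO311a} (the invariance of bulk-deformed Floer cohomology) yields $HF(\BlIII,(\bb_s^\epsilon,b_s^\epsilon);\Lambda_{0,nov}) \cong H^*(L_s^\epsilon;\Lambda_{0,nov})$ and hence non-displaceability. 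The bulk deformation is used precisely to cancel the obstruction coming from the Chekanov-type disk term and to tune the relative areas so the critical point equations become solvable for the whole interval of $s$; I expect the correct ansatz is $\bb_s^\epsilon$ supported on (the Poincaré duals of) the exceptional divisors together with a toric divisor, with coefficients depending on $s$ and $\epsilon$ in a way dictated by the area parameters of the almost-toric base.

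Next I would upgrade non-displaceability to superheaviness following \cite[Section~14, Lemma~23.3, Theorem~23.4]{FO311b}. Once the bulk-deformed Floer cohomology of $L_s^\epsilon$ is the full cohomology, standard arguments produce an idempotent $\be_s^\epsilon$ in $QH_{\bb_s^\epsilon}(\BlIII;\Lambda)$ whose associated spectral invariant defines a homogeneous Calabi quasimorphism $\mu^{\bb_s^\epsilon}_{\be_s^\epsilon}$ and partial symplectic quasi-state $\zeta^{\bb_s^\epsilon}_{\be_s^\epsilon}$, and $L_s^\epsilon$ is superheavy for both; here I must check that the quantum cohomology $QH_{\bb_s^\epsilon}(\BlIII;\Lambda)$ has a field factor through which $L_s^\epsilon$ is detected — this follows as in \cite{FO311b} once the Floer cohomology is nonzero and the potential is Morse at the relevant critical point. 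Finally, linear independence of the uncountable family $\{\mu^{\bb_s^\epsilon}_{\be_s^\epsilon}\}$ follows from \cite[Corollary~1.10, Section~19]{FO311b} exactly as in Corollary~\ref{cor: linear indep. quasimorph}: the tori $L_s^\epsilon$ for distinct $s$ are mutually disjoint (they are distinct fibres of the same fibration), so one can separate their superheavy supports by Hamiltonians and apply the separation criterion for quasimorphisms.

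The main obstacle I anticipate is the explicit criticality computation for the bulk-deformed potential after the two blowups — verifying that a single bulk class works uniformly for all $s \in [1/2,1)$ and all $\epsilon \in (0,1)$, keeping track of the extra disk classes from the exceptional divisors and their interaction with the nodal-fibre (Chekanov) disk, and confirming the Hessian is nondegenerate so the Floer cohomology is the full $H^*$ rather than a proper summand. Everything downstream (superheaviness, idempotents, linear independence) is then a formal consequence of \cite{FO311b} in the same way as Theorem~\ref{thm: heavy}.
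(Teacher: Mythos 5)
Your proposal follows essentially the same route as the paper: blow up the two corners of the semi-toric fibration of \cite{FO312}, compute the potential of $L_s^\epsilon$ (the paper obtains $u + \frac{T}{u}(1+w)(1+\frac{1}{w}) + T^{1-\epsilon}(w+\frac{1}{w})$ via a neck-stretching argument, the last term coming from the two new classes $H_i - E_i \pm \alpha$), exhibit a bulk class whose deformed potential has a critical point (the paper takes $\bb_s^\epsilon = T^{s-1/2}[\fs]$ with $\fs$ Poincar\'e dual to $\{y_1=0\}\cup E_1$), and then run the idempotent, superheaviness and disjointness arguments of \cite{FO311b} exactly as for Theorem \ref{thm: heavy}. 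Two cosmetic caveats: the isomorphism $HF \cong H^*$ follows from any critical point via Corollary \ref{cor: PotFloerHom} (nondegeneracy of the Hessian enters only in the semi-simplicity/idempotent step), and \cite[Theorem~2.5]{FO311a} is the invariance/non-displaceability statement rather than the source of that isomorphism.
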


The rest of the paper is organised as follows: 

In Section \ref{sec: FloerHom}, we make a quick introduction of bulk deformed 
potential and Floer cohomology for a Lagrangian $L$ satisfying Assumption \ref{ass: 
ass}. We refer the reader to \cite{FO311a,FO311b,FO312} for a complete account.
We then prove Lemma \ref{lem: H^1} and Corollary \ref{cor: PotFloerHom}, to show
that, for a Lagrangian torus $T$, critical points of the potential gives rise to 
(bulk deformed) Floer cohomology isomorphic to the usual cohomology of $T$.
We believe that \ref{lem: H^1} is known by experts on the field, but we are
not aware of it being written.

In Section \ref{sec: Regularity}, we define the notion of a pair $(X,L)$ 
consisting of a K\"{a}hler manifold $X$ and a Lagrangian submanifold $L$
being \emph{$K$-pseudohomogeneous}, for some Lie group $K$ acting
holomorphically and Hamiltonianly on $X$, leaving $L$ invariant.
We showed that if $(X,L)$ is \emph{$K$-pseudohomogeneous}, any Maslov index
$2$ holomorphic disk with boundary on $L$ such that its boundary is 
transverse to $K$-orbits, is regular. We use that to show regularity
for the Maslov index $2$ disks with boundary in $\Ts$.

In Section \ref{sec: LagTori}, we define the Lagrangian tori $\Ts$, establish
its potential function, essentially computed in \cite{Au07,Au09}, and prove
it satisfies Assumption \ref{ass: ass}, for some regular almost complex structure $J$
with the same potential function of the standard complex structure. We also prove
Proposition \ref{prp:toridistinct1} and show that holomorphic disks of Maslov index
bigger than $2$ have area bigger than $a = 1 - s$, which we use to argue why
Conjecture \ref{cnj:toridistinct} should hold.  

In Section \ref{sec: Proof}, we compute the critical points of the potential 
bulk deformed by some cocycle in $C^2(\PN, \Lambda_+)$. We show that for $n=2m$,
there are bulk deformation $\bb_s$ and a weak bounding cochain $b_s$ which is
a critical point of the potential $\PO_{\bb_s}^{\Tsm}$. It then follows from
Corollary \ref{cor: PotFloerHom} that the bulk deformed Floer cohomology 
$HF(\Tsm, (b_s,\bb_s);\Lambda)$ is isomorphic to the cohomology of the torus.
Non-displaceability then follows from \cite[Theorem~G]{FO3Book} which is also 
stated as \cite[Theorem~2.5]{FO311a}.

In Section \ref{sec: Quasimorph}, we finish the proof of Theorem \ref{thm: 
heavy}.

Finally in Section \ref{sec: Bl3}, we describe $(\BlIII, \omega_\epsilon) =
(\PxPBlII,\omega_\epsilon)$ as two blowups of capacity $\epsilon$ on two corners
of the moment polytope of $\PxP$. The Lagrangian tori $L^\epsilon_s$ on the
blowup comes from $\Theta^2_s \in \PxP$. We compute the potential for
$L^\epsilon_s$ and show the existence of critical points for some bulk
deformation. This allow us to prove Theorems \ref{thm: Bl3} and \ref{thm:
Bl3heavy}. These tori are equivalent to the fibres of the singular
fibration given by blowing up the corners of the ``semi-toric polytope''
described in \cite{FO312}, see Figure \ref{fig: Bl3}.

\subsection*{Acknowledgements} We are very grateful to Georgios Dimitroglou
Rizell, Ivan Smith, Dmitry Tonkonog and Kaoru Ono for useful discussions.

\section{Floer homology and the potential function} \label{sec: FloerHom}

Let $X$ be a symplectic manifold and $J$ a regular and compatible almost complex
structure. Let $L$ be a Lagrangian submanifold of $X$ (with a chosen spin
structure). We consider a unital canonical $A_\infty$ algebra structure
$\{\m_k\}$ on the classical cohomology $H(L;\Lambda_{0,nov})$ \cite[Section~6]{FO312},
\cite[Corollary~5.4.6, Theorem~A]{FO3Book}. The potential function
is defined from the space of weak bounding cochains $\hat{\mM}(L)$ of $L$ to
$\Lambda_0$. We refer the reader to \cite{FO311a, FO311b, FO312, FO3Book} for
the definition. 

Suppose we are given an compatible almost complex
structure $J_0$ for which $(X,L,J_0)$ satisfy:

\begin{ass} \label{ass: ass}   
Let $\beta \in \pi_2(X,L)$. Assume that:
\begin{enumerate}[label=(\subscript{A}{\arabic*})]
  \item If $\beta$ is represented by a non-constant $J_0$-holomorphic disk, then
 $\mu_L(\beta) \ge 2$, \label{ass: A1}
 \item Maslov index 2 $J_0$-holomorphic disks are regular, \label{ass: A2}
\end{enumerate}
\end{ass}
\noindent where $\mu_L$ is the Maslov index.


  
  
  
Throughout the paper we say an almost complex structure $J$ is regular 
if it satisfies assumption $(A_2)$.    

An almost complex structure satisfying Assumption \ref{ass: ass}, automatically
satisfies \cite[Condition~6.1]{FO312}, hence by
\cite[Theorem~A.1,~Theorem~A.2]{FO312} there is an embedding of
$H^1(L,\Lambda_0)$ into $\hat{\mathcal{M}}(L)$ and restricted to
$H^1(L,\Lambda_0)$ the potential function $\PO^L$ is so that

\begin{equation} \label{eq: Pot=m0}
  \m_0^b(1) = \PO^L(b)q[L],
\end{equation}
where 

\begin{equation} \label{eq: m0}
  \m_0^b(1) = \sum_{k=0}^\infty \m_k(b,\dots,b) = \underset{\mu_L(\beta) = 2}{\sum_{\beta \in \pi_2(X, L),}} 
  q^{\mu_L(\beta)/2}T^{\int_\beta \omega} \exp(b \cap \del \beta) \ev_{0*}([\mathscr{M}_1(\beta)]). 
\end{equation}

Here $[\mathscr{M}_1(\beta)]$ is the (virtual) fundamental class of the moduli space of
$J$-holomorphic disks in the class $\beta$ with 1 marked point and $\ev_0:
\sM_1(\beta) \to L$ is the evaluation map.

Using a notation closer to \cite{Au07,Au09} we define for $\beta \in 
\pi_2(X,L)$:

\begin{equation} \label{def: coord z}
  z_\beta(L,b) = T^{\int_\beta \omega} \exp(b \cap \del \beta).
\end{equation}

Letting $\eta_\beta$ be the degree of $\ev_0: \sM_1(\beta) \to L$, we can
write: 

\begin{equation} \label{eq: Pot}
  \PO^L(b) = \underset{\mu_L(\beta) = 2}{\sum_{\beta \in \pi_2(X, L),}} 
  \eta_\beta z_\beta(L,b)
\end{equation}

We want to consider the Floer cohomology of $L$ bulk-deformed by a class
$\bb = T^\rho[\fs] \in H^2(X, \Lambda_+)$ \cite{FO311a}. The potential function will
depend on the cocycle $\bb \in C^2(X, L; \Z)$, even though the Floer cohomology
doesn't. Since we use a cocycle in degree $2$ (Poincar\'e dual to a cycle of
codimension 2) the degree of the bulked deformed $A_\infty$ maps $\m_k^\bb$
\cite[(2.6)]{FO311a} is unaffected by the bulk and the bulk deformed potential is
given by:

\begin{equation} \label{eq: PotBulked}
  \PO^{L}_{\bb}(b) = \underset{\mu_L(\beta) = 2}{\sum_{\beta \in \pi_2(X, L),}} 
  \eta_\beta \exp[(\fs \cap \beta) T^\rho] z_\beta(L,b),
\end{equation}
where $b \in H^1(L,\Lambda_0)$, is a weak bounding cochain for the curved $A_\infty$
algebra $(H(L,\Lambda_{0,nov}), \{\m_k^\bb\})$, with 

\begin{equation} \label{eq: m0bulked}
  \m_0^{b,\bb}(1) = \sum_{k=0}^\infty \m_k^\bb(b,\dots,b) = \PO^{L}_{\bb}(b)q[L]. 
\end{equation}
 
The fact that $b \in H^1(L,\Lambda_0)$ is a weak bounding cochain for
$(H(L,\Lambda_0), \{\m_k^\bb\})$ implies that we can define a (not curved)
$A_\infty$ algebra $(H(L,\Lambda_{0,nov}), \{\m_k^{b,\bb}\})$, where

\begin{equation} \label{eq: mkbulked}
  \m_k^{b,\bb}(x_1, \dots, x_k) = \sum_{j=0}^\infty 
  \m_j^\bb(b,\dots,b,x_1,b,\dots,b,x_2,b,\dots,b,x_k,b,\dots,b). 
\end{equation}

In particular, 

\begin{gather}
  (\m_1^{b,\bb})^2 = 0; \label{eq: m_1 square} \\
  \m_1^{b,\bb}(\m_2^{b,\bb}(x,y)) = \pm \m_2^{b,\bb}(\m_1^{b,\bb}(x),y) \pm 
  \m_2^{b,\bb}(x,\m_1^{b,\bb}(y)). \label{eq: Leibniz}
\end{gather}

\begin{dfn} \label{dfn: bFH}
  We define the bulk deformed Floer cohomology:
 
\begin{equation}
  HF(L,(b,\bb); \Lambda_{0,nov}) = \frac{\ker(\m_1^{b,\bb})}{\im (\m_1^{b,\bb})}
\end{equation} 
  
\end{dfn}

\begin{rmk} Strengthening Assumption \ref{ass: ass} to assume regularity of
holomorphic disks with Maslov index smaller than $n-1$, one should be able to
define the Floer cohomology using the Pearl version \cite{BC09B}, and
analogously define its bulk-deformed version, which should be isomorphic to the
one in Definition \ref{dfn: bFH}. In that framework, the proof of Leibniz rule
\eqref{eq: Leibniz} follows the same ideas as \cite[Theorem~4]{Bu10}. \end{rmk}

By the work of Fukaya-Oh-Ohta-Ono, we have:

\begin{thm}[ Theorem G \cite{FO3Book}, Theorem 2.5 \cite{FO311a}] \label{thm: FOOOnonDisp}
  If $\psi: X  \to X$ is a Hamiltonian diffeomorphism, then the order of $\psi(L)\cap L$ is 
  not smaller than the rank of $HF(L,(b,\bb); \Lambda_{0,nov})\otimes_{\Lambda_{0,nov}} \Lambda_{nov}$. 
\end{thm}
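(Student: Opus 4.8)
The plan is to prove Theorem~\ref{thm: FOOOnonDisp} by reducing the statement about $\psi(L) \cap L$ to a rank computation in bulk-deformed Floer cohomology, following the standard Floer-theoretic machinery developed by Fukaya-Oh-Ohta-Ono. The key idea is that the bulk-deformed Floer cohomology $HF(L,(b,\bb);\Lambda_{0,nov})$ is, by construction, the homology of the complex $(H(L;\Lambda_{0,nov}),\m_1^{b,\bb})$ arising from the $A_\infty$-algebra bulk-deformed by $\bb$ and twisted by the weak bounding cochain $b$, and that this is \emph{independent} (up to isomorphism) of the Hamiltonian chain of perturbations used to define it.

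First I would recall the construction of the Floer cochain complex associated to the pair $(L, \psi(L))$: for a generic Hamiltonian $H$ generating $\psi$, one considers the $\Lambda_{0,nov}$-module generated by the intersection points $L \cap \psi(L)$ (after a small perturbation to achieve transversality) with the Floer differential counting $J$-holomorphic strips with bulk insertions from $\bb$ and boundary insertions from $b$ on both Lagrangian boundary components. The rank of the resulting Floer homology is then at most $|\psi(L) \cap L|$, since the complex has that many generators over $\Lambda_{0,nov}$; tensoring with $\Lambda_{nov}$ (which makes $\Lambda_{0,nov} \to \Lambda_{nov}$ a flat, indeed localization, map) the inequality $\operatorname{rank}_{\Lambda_{nov}} HF(L,\psi(L);(b,\bb)) \otimes \Lambda_{nov} \le |\psi(L)\cap L|$ still holds. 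Second, I would invoke the Hamiltonian invariance theorem: the continuation maps induced by interpolating Hamiltonians give an isomorphism $HF(L,\psi(L);(b,\bb)) \cong HF(L,L;(b,\bb)) = HF(L,(b,\bb);\Lambda_{0,nov})$, where the latter is the self-Floer cohomology of Definition~\ref{dfn: bFH}. These continuation maps are constructed exactly as in the undeformed case, with bulk and boundary deformation data carried along; the compatibility of $\bb$ (a class in $H^2(X;\Lambda_+)$, hence closed and not affected by the Hamiltonian isotopy) and the weak bounding cochain $b$ with the continuation moduli spaces is what guarantees the chain homotopy equivalence.

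Combining these two points gives $|\psi(L) \cap L| \ge \operatorname{rank}_{\Lambda_{nov}}\bigl(HF(L,(b,\bb);\Lambda_{0,nov}) \otimes_{\Lambda_{0,nov}} \Lambda_{nov}\bigr)$, which is the assertion.

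The main obstacle is purely one of bookkeeping and reference: all the analytic input — transversality via virtual fundamental chains, the $A_\infty$ structure of the deformed complex, the well-definedness of the deformed Floer differential squaring to zero (equation~\eqref{eq: m_1 square}), and the Hamiltonian invariance — is established in \cite{FO3Book} and \cite{FO311a}, so the ``proof'' here is really a matter of citing the precise statements and checking that the bulk deformation class $\bb \in H^2(X,\Lambda_+)$ and the weak bounding cochain $b \in H^1(L,\Lambda_0)$ satisfy the hypotheses under which those theorems apply. In particular one must verify that $b$ being a weak bounding cochain for $(H(L,\Lambda_{0,nov}),\{\m_k^\bb\})$ — which is exactly the condition ensuring \eqref{eq: m0bulked} and \eqref{eq: m_1 square} — is the correct input for the invariance theorem of \cite[Theorem~2.5]{FO311a}. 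Since this is verbatim the setup of Fukaya-Oh-Ohta-Ono, there is nothing genuinely new to prove, and I would simply quote \cite[Theorem~G]{FO3Book} / \cite[Theorem~2.5]{FO311a}, as the excerpt already does in stating the theorem.
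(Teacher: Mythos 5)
The paper does not prove this theorem at all: it is quoted verbatim as Theorem~G of \cite{FO3Book} / Theorem~2.5 of \cite{FO311a}, and your proposal correctly recognizes this, ending by deferring to the same citations. Your sketch of the underlying argument (the Floer complex for $(L,\psi(L))$ has $|\psi(L)\cap L|$ generators, so its homology has rank at most that over $\Lambda_{nov}$, and Hamiltonian invariance identifies it with the self-Floer cohomology of Definition~\ref{dfn: bFH}) is the standard one and is consistent with how the result is used here.
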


We would like to point out that the product $\m_2^{b,\bb}$ can be thought as
deformation of the cup product in the sense that for $x,y \in H(L,\Lambda_0)$
of pure degrees $|x|$ and $|y|$,

\begin{equation} \label{eq: cup*}
  \m_2^{b,\bb}(x,y) = x \cup y + \text{other terms}
\end{equation}

where $x \cup y$ comes from counting constant disks and the other terms is a sum
of elements of degree smaller than $|x| + |y|$ in $H(L,\Lambda_{0,nov})$, since
it comes from evaluating moduli spaces $\mathscr{M}_{k,l+1}(\beta)$ to a cycle of
dimension $|x| + |y| - \mu_L(\beta)$ and $(X,L,J)$ satisfies Assumption \ref{ass: A1}.

The following Lemma is well established for the monotone case in \cite{Bu10}, and
in the general case in \cite{FO312}.

\begin{lem}[Theorem 2.3 of \cite{FO312}] \label{lem: H^1}
  Take $(X,L)$ satisfying Assumption \ref{ass: ass}. Also assume that
  $H(L,\Lambda_0)$ is generated by $H^1(L,\Lambda_0)$ as an algebra with respect to the
  classical cup product. If $\m_1^{b,\bb}|_{H^1(L,\Lambda_{0,nov})} = 0$ then
  $\m_1^{b,\bb} \equiv 0$.
\end{lem}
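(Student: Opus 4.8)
The plan is to exploit the Leibniz rule \eqref{eq: Leibniz}, which says that $\m_1^{b,\bb}$ is a derivation with respect to the product $\m_2^{b,\bb}$. First I would recall from \eqref{eq: cup*} that $\m_2^{b,\bb}(x,y) = x\cup y + (\text{lower degree terms})$, where ``lower degree'' means strictly smaller than $|x|+|y|$ in the cohomological grading. The hypothesis is that $H(L,\Lambda_{0,nov})$ is generated as an algebra by $H^1$ under the \emph{classical} cup product, so every class can be written as an iterated cup product of degree-one classes; I would then prove by induction on the word length (equivalently, on the cohomological degree) that $\m_1^{b,\bb}$ vanishes on all of $H(L,\Lambda_{0,nov})$, given that it vanishes on $H^1$.

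Concretely, the induction goes as follows. The base case is degree $0$: $\m_1^{b,\bb}(1) = \m_0^{b,\bb}(1)$ composed appropriately, but more directly $1$ is the unit of the unital $A_\infty$ structure, so $\m_1^{b,\bb}(1)=0$ follows from unitality; and degree $1$ is the hypothesis. For the inductive step, suppose $\m_1^{b,\bb}$ vanishes on all classes of degree $\le k-1$, and let $z$ have degree $k\ge 2$. By the generation hypothesis we may write $z$ as a $\Lambda_{0,nov}$-linear combination of classes of the form $x\cup y$ with $|x|,|y|\ge 1$ and $|x|+|y|=k$; by $\Lambda_{0,nov}$-linearity of $\m_1^{b,\bb}$ it suffices to treat $z = x\cup y$. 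Using \eqref{eq: cup*}, write $x\cup y = \m_2^{b,\bb}(x,y) - r$, where $r$ is a sum of terms of degree $\le k-1$, so $\m_1^{b,\bb}(r)=0$ by the inductive hypothesis, and hence $\m_1^{b,\bb}(x\cup y) = \m_1^{b,\bb}(\m_2^{b,\bb}(x,y))$. Now apply the Leibniz rule \eqref{eq: Leibniz}: this equals $\pm\m_2^{b,\bb}(\m_1^{b,\bb}(x),y)\pm\m_2^{b,\bb}(x,\m_1^{b,\bb}(y))$. Since $|x|,|y|\le k-1$ (as both are $\ge 1$ and sum to $k$), the inductive hypothesis gives $\m_1^{b,\bb}(x)=\m_1^{b,\bb}(y)=0$, so the whole expression vanishes. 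Therefore $\m_1^{b,\bb}(z)=0$, completing the induction.

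The main point requiring care is the bookkeeping of the cohomological grading: one must make sure that in $x\cup y$ the two factors $x$ and $y$ can indeed always be chosen of strictly positive degree (so that strictly smaller than $k$), which is exactly what ``generated by $H^1$ as an algebra'' buys us, and that the ``other terms'' in \eqref{eq: cup*} genuinely drop the degree so the inductive hypothesis applies to them. A secondary subtlety is that we are working over $\Lambda_{0,nov}$ rather than over a field: the grading is by cohomological degree of $L$ and the Novikov coefficients are grading-neutral in the relevant sense (the parameter $q$ carries the Maslov grading but the $A_\infty$ operations $\m_k^{b,\bb}$ have a fixed degree so the argument is compatible), so the induction on degree is still well-founded; I would note this explicitly. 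I do not anticipate a serious obstacle — the statement is essentially the standard argument of Buhovsky \cite{Bu10} and \cite[Theorem~2.3]{FO312} transported to the bulk-deformed setting, and the only inputs are the derivation property \eqref{eq: Leibniz} and the leading-term description \eqref{eq: cup*}, both of which are available under Assumption \ref{ass: ass}.
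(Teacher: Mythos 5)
Your proof is correct and follows essentially the same route as the paper: induction on cohomological degree, using the leading-term description \eqref{eq: cup*} to reduce $\m_1^{b,\bb}(x\cup y)$ to $\m_1^{b,\bb}(\m_2^{b,\bb}(x,y))$ modulo lower-degree terms, and then the Leibniz rule \eqref{eq: Leibniz} together with the inductive hypothesis. Your version just spells out the base cases and the grading bookkeeping more explicitly than the paper does.
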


\begin{proof}

First we point out that $\m_1^{b,\bb}|_{H^0(L,\Lambda_{0,nov})} = 0$. 
Since $H(L,\Lambda_0)$ is generated by $H^1(L,\Lambda_0)$ with respect to the
cup product, we only need to show by induction on the degree that for $x$ and 
$y$ of pure degree $|x| \ge 1$, $|y|\ge 1$,
$\m_1^{b,\bb}(x \cup y) = 0$, if $\m_1^{b,\bb}(z) = 0$ for all $z$, such that
$|z| < |x| + |y|$. Using \eqref{eq: cup*},
 
\begin{equation*}
 \m_1^{b,\bb}(x \cup y) =  \m_1^{b,\bb}(\m_2^{b,\bb}(x,y))  -  \m_1^{b,\bb}(\text{other terms}) 
 = 0
\end{equation*}

by induction hypothesis and using the Leibniz rule \eqref{eq: Leibniz}. 
 
\end{proof}

\begin{rmk} \label{rmk: Buh}
  
  Lemma \ref{lem: H^1} strengthen the result of \cite[Theorem~6.4.35]{FO3Book}
  and \cite{Bu10}, showing that the minimal Maslov number $M_L$ of any Lagrangian
  torus $L$ (or any orientable Lagrangian such that the cohomology ring is
  generated by $H^1$) in $\C^n$ is 2, provided $T$ satisfies Assumption
  \ref{ass: ass} for some $J$. That is because the Lagrangian is orientable and
  $HF(T, (b,\bb); \Lambda) \equiv 0$ (from Theorem \ref{thm: FOOOnonDisp}, since
  $T$ is displaceable), so there must be a Maslov index $2$ disk. The inequality
  $2 \le M_L \le n +1$ was proven in \cite[Theorem~6.1.17]{FO3Book}, for any
  spin Lagrangian $L\subset \C^n$ satisfying Assumption \ref{ass: ass}, via the
  use of spectral sequence.
  
  \end{rmk}

\begin{dfn} \label{dfn: crit Point} Take $(X,L)$ satisfying the assumptions of
Lemma \ref{lem: H^1}. Assume that $\pi_1(L) \cong H_1(L, \Z)$ and $\pi_2(X, L)
\cong \pi_2(X) \oplus H_1(L, \Z)$. So, we are able to write the Potential
function \eqref{eq: PotBulked} in terms of $z_i = z_{\beta_i}$, for some $\beta_1, \dots,
\beta_n \in \pi_2(X, L)$, where $\del \beta_1, \dots, \del \beta_n$ is a basis
of $H_1(L, \Z)$. We say that $b$ is a \emph{critical point} of $\PO^{L}_{\bb}(b)$ 
if:
$$z_i \frac{\del
\PO^{L}_{\bb}(b)}{\del z_i} = 0.$$

\end{dfn}

\begin{cor}[Theorem 2.3 of \cite{FO312}] \label{cor: PotFloerHom}
 
Take $(X,L)$ satisfying the assumptions of Lemma \ref{lem: H^1} and Definition
\ref{dfn: crit Point}. If $b$ is a critical point of $\PO^{L}_{\bb}(b)$
\eqref{eq: PotBulked} for $\bb = T^{\rho}[\fs] \in H^2(X, \Lambda_+)$, then
$HF(L,(b,\bb);\Lambda) \cong H(L; \Lambda)$.

\end{cor}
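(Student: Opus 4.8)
The plan is to deduce Corollary~\ref{cor: PotFloerHom} from Lemma~\ref{lem: H^1} by showing that the critical point condition of Definition~\ref{dfn: crit Point} is exactly the statement that $\m_1^{b,\bb}$ vanishes on $H^1(L,\Lambda_{0,nov})$; once that is established, Lemma~\ref{lem: H^1} forces $\m_1^{b,\bb} \equiv 0$, and then $HF(L,(b,\bb);\Lambda_{0,nov}) = \ker(\m_1^{b,\bb})/\im(\m_1^{b,\bb}) = H(L;\Lambda_{0,nov})$ by Definition~\ref{dfn: bFH}, and tensoring down to $\Lambda$ gives the stated isomorphism. So the crux is the computation relating $\m_1^{b,\bb}|_{H^1}$ to the partial derivatives of $\PO^L_\bb$.

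First I would recall that $b\in H^1(L,\Lambda_0)$ being a weak bounding cochain means $\m_0^{b,\bb}(1) = \PO^L_\bb(b)\,q[L]$ is a multiple of the unit, so $(H(L,\Lambda_{0,nov}),\{\m_k^{b,\bb}\})$ is a genuine (uncurved) $A_\infty$ algebra and $\m_1^{b,\bb}$ is a differential. Next I would write $b = \sum_i x_i \mathbf{e}_i$ with $\mathbf{e}_1,\dots,\mathbf{e}_n$ a basis of $H^1(L,\Z)$ dual to $\del\beta_1,\dots,\del\beta_n$, so that $x_i$ is the exponent appearing in $z_i = z_{\beta_i}(L,b) = T^{\int_{\beta_i}\omega}\exp(x_i)$ (using $b\cap\del\beta_j = x_j$ by duality). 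The key structural input is the standard identity $\m_1^{b,\bb}(\mathbf{e}_i) = \bigl(\partial \PO^L_\bb/\partial x_i\bigr)\,[L]^{\vee}$-type formula: differentiating $\m_0^{b,\bb}(1) = \PO^L_\bb(b)q[L]$ with respect to the coefficient $x_i$ of $\mathbf{e}_i$ in $b$, and using that $\partial/\partial x_i$ applied to the $A_\infty$ relations produces $\m_1^{b,\bb}$ acting on $\mathbf{e}_i$, yields
\begin{equation*}
  \m_1^{b,\bb}(\mathbf{e}_i) = \Bigl(z_i\frac{\partial \PO^L_\bb}{\partial z_i}\Bigr)[L] \quad \text{in } H^*(L;\Lambda_{0,nov}),
\end{equation*}
since $z_i\,\partial/\partial z_i = \partial/\partial x_i$ because $z_i = cT^{\dots}e^{x_i}$. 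I would cite \cite[Theorem~2.3]{FO312} (and \cite[Section~6]{FO312}, \cite{Bu10} in the monotone case) for this; the point that the bulk deformation $\bb$ enters only through reweighting the coefficients $\eta_\beta \exp[(\fs\cap\beta)T^\rho]$ and does not change degrees was already noted around \eqref{eq: PotBulked}, so the same derivation goes through verbatim with $\PO^L$ replaced by $\PO^L_\bb$.

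Given that identity, $b$ is a critical point, i.e. $z_i\,\partial\PO^L_\bb/\partial z_i = 0$ for all $i$, precisely when $\m_1^{b,\bb}(\mathbf{e}_i) = 0$ for a basis of $H^1$, hence when $\m_1^{b,\bb}|_{H^1(L,\Lambda_{0,nov})} = 0$. Applying Lemma~\ref{lem: H^1} — whose hypotheses (Assumption~\ref{ass: ass}, and $H(L;\Lambda_0)$ generated by $H^1$, which holds for the torus $L$) are part of our standing assumptions — gives $\m_1^{b,\bb}\equiv 0$ on all of $H(L;\Lambda_{0,nov})$, and the Floer cohomology computation follows immediately. I expect the main obstacle to be a clean justification of the differentiation-of-the-$A_\infty$-structure identity $\m_1^{b,\bb}(\mathbf{e}_i) = (z_i\,\partial_{z_i}\PO^L_\bb)[L]$: one must be careful that differentiating with respect to $x_i$ commutes with the $\m_k$ operations and that the only output landing in $H^*(L)$ of the correct degree is the $[L]$-component, but this is exactly the content cited from \cite[Theorem~2.3]{FO312}, so I would invoke it rather than reprove it.
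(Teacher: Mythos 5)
Your proposal is correct and follows essentially the same route as the paper: the paper likewise reduces to showing $\m_1^{b,\bb}|_{H^1}=0$ at a critical point via the identity $\m_1^{b,\bb}(\sigma)=\sigma\cap\sum_i x_i\bigl(z_i\,\partial\PO^L_\bb/\partial z_i\bigr)$, obtained by summing Maslov index $2$ disk contributions (your ``differentiate $\m_0^{b,\bb}(1)$ in the coordinates of $b$'' derivation is the same computation, since $\partial/\partial x_i$ brings down exactly the factor $k_i=\langle\partial\beta, \mathbf{e}_i^\vee\rangle$), and then applies Lemma \ref{lem: H^1}.
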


\begin{proof}
  
  Take a basis $x_1, \dots, x_n$ a basis of $H_1(L, \Z)$. Let $\beta_1, \dots,
  \beta_n \in \pi_2(X, L) \cong \pi_2(X) \oplus H_1(L, \Z)$, be so that $\del
  \beta_i = x_i \in H_1(L, \Z)$ and write the Potential $\PO^{L}_{\bb}(b)$
  \eqref{eq: PotBulked} in terms of $z_i = z_{\beta_i}$.
  
 Since $\fs$ is of degree $2$, we have that $\m_1^{b,\bb}(\sigma)$ for $\sigma
 \in H^1(L,\Lambda)$, only counts contributions of Maslov index 2 disks. A 
 Maslov index 2 $J$-holomorphic disk in the class $\beta = \gamma + k_1 \beta_1
 + \cdots + k_n \beta_n$, $\gamma \in \pi_2(X)$ contributes to $\m_1^{b,\bb}(\sigma)$
 as
 
 $$ \sum _i k_i(\sigma \cap x_i) \eta_\beta \exp[(\fs \cap \beta)T^\rho]
 T^{\int_\gamma \omega} z_1^{k_1}\cdots z_n^{k_n} $$
 
Summing all contributions of Maslov index 2 $J$-holomorphic disks we have:

$$ \m_1^{b,\bb}(\sigma) = \sigma \cap \sum_i x_i \left(z_i \frac{\del
\PO^{L}_{\bb}(b)}{\del z_i} \right)$$
 
 Therefore, if $b$ is a critical point of $\PO_{\bb}^L(b)$, we have that
 $\m_1^{b,\bb}|_{H^1(L,\Lambda)} = 0$ and by Lemma \ref{lem: H^1},
 $\m_1^{b,\bb} \equiv 0$, so $HF(L,(b,\bb);\Lambda_{0,nov}) \cong H(L; \Lambda_{0,nov})$.
  
\end{proof}

\section{Regularity Lemma} \label{sec: Regularity}

We now move to the K\"{a}hler setting and we discuss a Lemma that we will use to
prove regularity for Maslov index 2 disks with boundary on $\Ts$ with respect to
the standard complex structure in $\PN$. The following definition is inspired in
\cite[Definition~1.1.1]{EL15b}.

\begin{dfn} \label{dfn: almHomogeneuos}
  Let $L$ be a $n$ dimensional Lagrangian in a K\"{a}hler manifold $X$. Assume 
  that $K$ is a Lie group of dimension $n-1$ acting Hamiltonianly and 
  holomorphically on $X$ preserving $L$. Assume that the action restricted to $L$
  is free. Then we say that $(X,L)$ is \emph{$K$-pseudohomogeneous}.  
\end{dfn}

We get then the following Lemma:

 \begin{lem} \label{lem: almHom}
   Let $(X,L)$ be $K$-pseudohomogeneous, for some Lie group $K$. 
   If $u$ is a Maslov index 2 holomorphic disk such that $\del 
   u$ is transverse to the $K$-orbits, then $u$ is regular.
 \end{lem}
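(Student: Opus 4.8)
The plan is to show that the linearized Cauchy--Riemann operator $D_u\bar\partial$ is surjective by exhibiting, for each $u$ with $\partial u$ transverse to the $K$-orbits, enough holomorphic sections of $u^*TX$ (with totally real boundary condition $u|_{\partial\D}^*TL$) to span the cokernel. First I would recall the standard Riemann--Roch index count: a Maslov index $2$ disk has $\mathrm{index}(D_u\bar\partial)= n + \mu_L(u) = n+2$, so the expected dimension of the moduli space with no marked points is $n+2-3 = n-1$ once we quotient by the automorphisms of the disk; equivalently, it suffices to produce an $(n+2)$-dimensional space of holomorphic sections (before dividing out reparametrizations), since then $\dim\ker D_u\bar\partial \ge n+2 \ge \mathrm{index}$, forcing $\mathrm{coker}\,D_u\bar\partial = 0$.

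The key mechanism is that the $K$-action furnishes such sections essentially for free. Because $K$ acts holomorphically on $X$ and preserves $L$, each element $\xi$ of the Lie algebra $\mathfrak{k}$ generates a holomorphic vector field $X_\xi$ on $X$ tangent to $L$ along $L$; pulling back along $u$ gives a holomorphic section $u^*X_\xi$ of $u^*TX$ satisfying the totally real boundary condition, hence an element of $\ker D_u\bar\partial$. This produces a linear map $\mathfrak{k}\to \ker D_u\bar\partial$ of dimension $n-1$. Next I would observe that the three-dimensional algebra of infinitesimal automorphisms of the disk $\D$ (the Lie algebra of $\mathrm{Aut}(\D,\partial\D)=\mathrm{PSL}(2,\R)$), acting by $v\mapsto du(v)$, also lands in $\ker D_u\bar\partial$ and, together with $\mathfrak{k}$, spans a subspace of dimension at least $n+2$. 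Here the transversality hypothesis enters crucially: the only way the combined span could fail to have dimension $n+2$ is if some nontrivial combination of a disk-automorphism field and a $K$-field vanished identically, or more precisely dropped rank; transversality of $\partial u$ to the $K$-orbits guarantees that the $n-1$ boundary evaluations of the $\mathfrak{k}$-fields, together with the (generically 2-dimensional) span of the $\partial u$-direction coming from disk rotations, remain independent, so the map $\mathfrak{k}\oplus\mathfrak{psl}_2\to \ker D_u\bar\partial$ has image of dimension $\ge n+2$. Combined with the index being exactly $n+2$, this forces surjectivity of $D_u\bar\partial$, i.e. regularity.

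I would then package the bookkeeping: let $W = \{\,u^*X_\xi : \xi\in\mathfrak{k}\,\} + \{\,du(v): v\in\mathfrak{psl}_2\,\}\subseteq \ker D_u\bar\partial$. Transversality of $\partial u$ to $K$-orbits means that at any boundary point $z_0\in\partial\D$ the evaluation $\xi\mapsto X_\xi(u(z_0))$ has image complementary, inside $T_{u(z_0)}L$, to the line $\R\cdot\partial_\theta u(z_0)$ spanned by the boundary rotation; so the $(n-1)$-dimensional $\mathfrak{k}$-contribution and a $1$-dimensional rotation contribution already give dimension $n$ at one boundary point, and the remaining two independent disk automorphisms (the parabolic/hyperbolic ones, which move $z_0$) contribute the missing $2$ dimensions since $du$ is nonconstant. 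Hence $\dim W\ge n+2 = \mathrm{index}(D_u\bar\partial)$, and as the cokernel of a Fredholm operator cannot be negative-dimensional while the kernel is at least the index, $D_u\bar\partial$ is onto.

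The main obstacle I anticipate is the last linear-algebra step — verifying cleanly that the disk-automorphism fields and the $\mathfrak k$-generated fields are genuinely independent (no spurious relation), and handling the freeness-of-the-action hypothesis so that $X_\xi$ never vanishes on $L$, which is what makes the boundary evaluation injective on $\mathfrak k$. One must be a little careful that $du(v)$ for $v$ the infinitesimal rotation is exactly the boundary tangent field $\partial_\theta u$ and that this is the one direction already ``used up'' by transversality, so the count is tight rather than over- or under-counting; once this is pinned down the argument is essentially the classical automorphic-vector-field regularity criterion (as in the cited \cite{EL15b}) adapted to the $K$-orbit-transverse setting.
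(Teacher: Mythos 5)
Your construction of kernel elements is the right raw material --- it is essentially what the paper uses, namely the fields $V_i = \del u/\del\theta_i$ generated by the $K$-action together with $du$ applied to infinitesimal automorphisms of the disk --- but the logical step you use to conclude regularity is false. For a Fredholm operator one has $\dim\ker D_u\bar\del = \indx + \dim\OP{coker}\, D_u\bar\del$, so the inequality $\dim\ker \ge \indx$ holds \emph{automatically} and carries no information; exhibiting an $(n+2)$-dimensional subspace of the kernel cannot force the cokernel to vanish. (Concretely: a rank-two Riemann--Hilbert problem splitting into line bundles of partial Maslov indices $-4$ and $6$ has total Maslov index $2$, index $4$, kernel of dimension $7 \ge 4$, and cokernel of dimension $3$.) What is needed is an \emph{upper} bound $\dim\ker \le \indx$, or a direct surjectivity argument; your closing sentence ``the cokernel cannot be negative-dimensional while the kernel is at least the index'' has the inequality pointing the wrong way.

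The paper supplies the missing step via a splitting argument that your proposal stops just short of. The transversality hypothesis, fed through Lemma \ref{lem: Vi's}, shows that $W = du(r\,\del/\del\theta)$ together with the $K$-generated fields $V_1,\dots,V_{n-1}$ frame $u^*TX$ along $\del\D$: the winding of $\det^2(W\wedge V_1\wedge\cdots\wedge V_{n-1})$ equals the Maslov index $2$, which is entirely accounted for by the simple zero of $W$ at the origin, so the frame degenerates nowhere else. This yields a holomorphic splitting $u^*TX \cong T\D\oplus\fL_1\oplus\cdots\oplus\fL_{n-1}$ compatible with the totally real boundary condition, with each $\fL_i$ trivial of partial Maslov index $0$ and the $T\D$ factor of partial Maslov index $2$. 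The $\bar\del$ problem then decomposes as a direct sum of line-bundle problems, each of non-negative partial index and hence surjective, and the kernel is computed to be exactly $T_{\id}\Aut(\D)\oplus\bigoplus_{i=1}^{n-1}\hol((\D,\del\D),(\C,\R))$, of dimension $n+2=\indx$. It is the absence of negative partial indices --- not the abundance of kernel elements --- that gives regularity; without the splitting, your dimension count proves nothing.
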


The proof of the above Lemma relies on the Lemmas below, very similar to 
\cite[Lemmas~5.19, \ 5.20]{Vi13}.

\begin{lem} \label{lem: Vi's}
    
   Let $u: \D \to X$ be a Maslov index 2 disk in a K\"ahler manifold $X$ of
  complex dimension $n$ with boundary on a Lagrangian $L$. Assume that
$u_{|\del \D}$ is an immersion. Call $W = du(r\sfrac{\del}{\del \theta})$ a
holomorphic vector field along $u$ vanishing at $0$ and tangent to the boundary.
Assume also that there exists $V_1,\dots, V_{n-1}$ holomorphic vector fields in
$u^* TX$ such that $W \wedge V_1 \wedge \cdots \wedge V_{n-1} \ne 0$ along the
boundary of $u$. Then $u$ is an immersion and no linear combination of the
$V_i$'s is tangent to $u(\D)$.
       
   \end{lem}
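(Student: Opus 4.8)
\textbf{Proof plan for Lemma \ref{lem: Vi's}.}

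The plan is to exploit the standard fact that for a holomorphic disk $u \colon \D \to X$ with boundary on a Lagrangian $L$, the bundle pair $(u^*TX, (u|_{\del \D})^*TL)$ splits holomorphically as a direct sum of line bundle pairs $\bigoplus_{j=1}^n (\mathcal{L}_j, \ell_j)$ with Maslov indices summing to $\mu_L(u) = 2$, and that by automatic transversality/positivity considerations each summand has Maslov index at most $1$ unless some degenerate behaviour occurs along the boundary. First I would use $W$: since $u|_{\del \D}$ is an immersion and $W = du(r\,\sfrac{\del}{\del\theta})$ is a nonzero holomorphic section of $u^*TX$ that is tangent to the boundary, vanishes to first order at the origin, and is nowhere zero on $\del \D$, it spans a trivial line subbundle pair of Maslov index $1$ (or at least a nontrivial section forcing one summand to carry ``positive'' index). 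I would then set this summand aside and observe that the remaining summand(s) must have total Maslov index $1$; combined with the hypothesis that $V_1,\dots,V_{n-1}$ are holomorphic sections of $u^*TX$ with $W\wedge V_1 \wedge \cdots \wedge V_{n-1}\neq 0$ along $\del\D$, these sections trivialize the complementary bundle along the boundary. The key point is that a holomorphic section of a line bundle of Maslov index $\le 1$ (with appropriate totally real boundary condition) that is nonvanishing on the boundary must be nonvanishing everywhere, or equivalently that the total vanishing order (counted with boundary contributions) is controlled by the Maslov index.

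Concretely, the key steps in order: (1) decompose $(u^*TX, (u|_{\del\D})^*TL) = \bigoplus_j (\mathcal{L}_j,\ell_j)$; (2) use $W$ to conclude that in a frame adapted to the boundary, $W$ occupies a Maslov-$1$ summand, hence the other summands together carry Maslov index $1$; (3) use $V_1,\dots,V_{n-1}$, which span the complement of $W$ along $\del\D$, to show that the complementary rank-$(n-1)$ sub-bundle is trivialized along the boundary, so each of its summands has Maslov index $0$ except one with index $1$; (4) conclude from the vanishing-order count that $W\wedge V_1\wedge\cdots\wedge V_{n-1}$, being a holomorphic section of $\det(u^*TX)$ nonvanishing on $\del\D$ and of total degree matching the Maslov index $2$, can vanish only at interior points with total multiplicity $0$ after accounting for the forced first-order zero of $W$ at $0$ — wait, more carefully: the first-order zero of $W$ at the origin is already ``used up'' by the Maslov index $2$, so $W$ itself must be nonzero away from $0$ and the $V_i$'s nonzero everywhere, which gives that $du$ has full rank (i.e. $u$ is an immersion) and that no linear combination $\sum c_i V_i$ is tangent to $u(\D)$, since such tangency would mean $W\wedge V_1\wedge\cdots\wedge V_{n-1}$ degenerates at an interior point, contradicting the degree count; (5) finally translate regularity: once the bundle pair splits as Maslov $\le 1$ summands (after the analysis), $H^1$ of each vanishes, giving surjectivity of the linearized operator — though this last translation is really the content of Lemma \ref{lem: almHom} rather than this Lemma, so here I would stop at the immersion and transversality-of-tangency conclusions.

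The main obstacle I anticipate is step (2)–(3): carefully justifying that the holomorphic frame $(W, V_1,\dots,V_{n-1})$ along the boundary forces the holomorphic splitting of the bundle pair into pieces of Maslov index $1,1,0,\dots,0$ (rather than, say, $2,0,\dots,0$ or involving negative-index summands that are compensated), and handling the bookkeeping of zeros of holomorphic sections of line bundles over the disk with totally real boundary conditions — in particular that interior zeros count with multiplicity $2$ (from the doubling construction) while boundary zeros count with multiplicity $1$, so that the Maslov index bounds the total. I would model this bookkeeping on the Riemann–Hurwitz / degree argument in \cite[Lemmas~5.19,~5.20]{Vi13} and on the automatic transversality arguments for disks (e.g. in the style of Hofer–Lizan–Sikorav in dimension four, adapted to higher dimension via the line-bundle splitting). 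The transversality-of-tangency statement ``no linear combination of the $V_i$ is tangent to $u(\D)$'' then follows because tangency of $\sum c_i V_i$ to $u(\D)$ at a point $p$ would mean $\sum c_i V_i(p) \in \mathrm{span}(du_p)$, and since $du_p$ is spanned by $W(p)$ (and its conjugate) for $p$ away from $0$, this would make $W\wedge V_1\wedge\cdots\wedge V_{n-1}$ vanish at $p$ — but we have already shown this wedge is nonvanishing on $\D\setminus\{0\}$, a contradiction.
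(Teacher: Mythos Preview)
Your step (4) is the entire argument, and it matches the paper's proof: $\sigma := W \wedge V_1 \wedge \cdots \wedge V_{n-1}$ is a holomorphic section of $\det(u^*TX)$ that is nonvanishing on $\del\D$, and the interior zeros of $\sigma$ (counted with multiplicity, doubled --- this is the paper's ``$\det^2$'') compute the Maslov index $2$, so $\sigma$ has exactly one simple zero, namely the forced zero of $W$ at the origin. Both conclusions then follow exactly as you describe at the end.

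Steps (1)--(3) are an unnecessary detour, and you are right to flag them as the main obstacle --- because they are not needed at all. The paper never splits the bundle pair into line summands: the zero count for the \emph{determinant} section already gives everything, with no need to know how the Maslov index distributes. (A splitting exists by Oh's theorem, but the partial indices could a priori be $(2,0,\dots,0)$ or $(3,-1,0,\dots,0)$, etc.; ruling these out would essentially reuse the determinant argument you already have in step (4), so the splitting buys nothing.) Delete (1)--(3) and the proof collapses to two lines, as in the paper.
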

   
   \begin{proof} Up to reparametrization, we may assume $du(0) \ne 0$. 
   The result follows from the fact that the zeros of $\det^2 (W \wedge V_1 \wedge
   \cdots \wedge V_{n-1})$ computes the Maslov index, which is assumed to be
   $2$. So $W \wedge V_1 \wedge \cdots \wedge V_{n-1}$ can only vanish once
   (with order 1). Since $W$ already vanishes at $0$, we cannot have either
   $du(x) = 0$ or a linear combination of the $V_i$'s being a complex multiple
   of $W$.
   \end{proof}
   
   \begin{lem} \label{lem: reg}
     
     Let $u_{\theta_1, \dots, \theta_{n-1}}$ be an $n-1$ dimensional family of
     Maslov index 2 holomorphic disks in a K\"{a}hler manifold $X$ of complex
     dimension $n$, $\theta_i \in (-\epsilon, \epsilon)$. If $u:= u_{0, \dots,
     0}$ and $V_i := \frac{\del u}{\del \theta_i}$ satisfy the hypothesis of
     Lemma \ref{lem: Vi's}, then $u$ is regular. 
     
   \end{lem}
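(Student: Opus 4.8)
\textbf{Proof plan for Lemma \ref{lem: reg}.}

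The plan is to show that the linearized operator $D_u\bar\partial$ is surjective by producing enough sections in its kernel to saturate the (virtual) dimension count. Since $u$ has Maslov index $2$ and $X$ has complex dimension $n$, the expected dimension of the moduli space of holomorphic disks through $0$ marked points is $n-1+\mu_L(\beta)-3+3 = n$ (or, counting unparametrized disks with the boundary marked point, the index of $D_u\bar\partial$ on the relevant space is $n-1$); in any case it matches the dimension of the given family $u_{\theta_1,\dots,\theta_{n-1}}$ together with the holomorphic reparametrizations. First I would invoke Lemma \ref{lem: Vi's}: its hypotheses are assumed to hold for $u$ and the $V_i=\partial u/\partial\theta_i$, so we conclude that $u$ is an immersion and that no nontrivial linear combination of the $V_i$ is tangent to $u(\D)$. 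In particular $W, V_1,\dots,V_{n-1}$ frame $u^*TX$ along $\partial\D$ away from $0$, and $V_1,\dots,V_{n-1}$ span a complement to $Tu(\D)$ there.

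Next I would argue that the $V_i$, being derivatives of a family of honest $\bar\partial$-holomorphic disks, lie in $\ker D_u\bar\partial$, and together with the infinitesimal reparametrizations they span a space whose dimension equals the index of $D_u\bar\partial$. The key point is linear independence: a relation $\sum c_iV_i + (\text{reparametrization vector field}) = 0$ would force $\sum c_iV_i$ to be tangent to $u(\D)$ (reparametrization fields are tangent to the image), contradicting the conclusion of Lemma \ref{lem: Vi's} unless all $c_i=0$, and then the reparametrization field vanishes too. Hence $\dim\ker D_u\bar\partial \ge \indx D_u\bar\partial$. Combined with $\dim\ker D_u\bar\partial - \dim\operatorname{coker} D_u\bar\partial = \indx D_u\bar\partial$, this gives $\operatorname{coker} D_u\bar\partial = 0$, i.e.\ $u$ is regular.

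The step I expect to be the main obstacle is the bookkeeping of the index: one has to be careful about whether disks are parametrized or unparametrized, whether the boundary marked point is fixed, and whether one works with the operator on $W^{k,p}$ sections with totally real boundary condition $u^*TL$ — the relevant Riemann–Roch formula gives $\indx D_u\bar\partial = n + \mu_L(\beta) = n+2$ for parametrized disks, and one must subtract the $3$-dimensional automorphism group $\Aut(\D)$ (or fix three boundary points) to land on the $n-1$ matching the family. A clean way to sidestep part of this is to note that automatic regularity for Maslov index $2$ disks in this situation is a local statement: the restriction of $D_u\bar\partial$ to a suitable finite-dimensional subspace is already an isomorphism onto the obstruction space, because the holomorphic frame $W\wedge V_1\wedge\cdots\wedge V_{n-1}$ vanishes only to order $1$ (this is exactly the Maslov $=2$ input used in Lemma \ref{lem: Vi's}), which forces the relevant twisted bundle $u^*TX$ to split as a sum of line bundles of degrees summing to $2$ with at most one of positive degree, and such split bundles have vanishing $H^1$ with the appropriate boundary condition. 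I would present the argument in the first form (kernel saturates the index) as the main line and remark on the splitting description.
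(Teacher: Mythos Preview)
Your main line has a genuine logical gap: from $\dim\ker D_u\bar\partial \ge \indx D_u\bar\partial$ together with $\dim\ker - \dim\operatorname{coker} = \indx$ you get only $\dim\operatorname{coker} = \dim\ker - \indx \ge 0$, which is vacuous. Exhibiting $n+2$ independent elements in the kernel does \emph{not} by itself force the cokernel to vanish; nothing prevents there being further holomorphic sections and a matching cokernel. To conclude regularity you must either bound $\dim\ker$ from above or show $\operatorname{coker}=0$ directly, and your first argument does neither.

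What actually carries the proof is precisely the splitting you relegate to a side remark, and this is exactly how the paper argues. Using Lemma~\ref{lem: Vi's} one knows $u$ is an immersion and the $V_i$ are everywhere transverse to $u(\D)$, so the holomorphic sections $V_1,\dots,V_{n-1}$ trivialise a complement to $T\D$ inside $u^*TX$; this gives a holomorphic splitting $u^*TX \cong T\D \oplus \fL_1 \oplus \cdots \oplus \fL_{n-1}$ with $\fL_i$ the trivial line bundle generated by $V_i$, and the boundary condition splits as $T\partial\D \oplus \RRe(\fL_1)\oplus\cdots\oplus\RRe(\fL_{n-1})$. Now the linearised $\bar\partial$ decomposes accordingly, and on each summand one reads off kernel and cokernel: the $(T\D,T\partial\D)$ piece contributes $T_{\id}\Aut(\D)$ (dimension $3$, no cokernel since Maslov $=2\ge -1$), and each $(\fL_i,\RRe(\fL_i))$ contributes $\hol((\D,\partial\D),(\C,\R))\cong\R$ (Maslov $=0$, no cokernel). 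Hence $\dim\ker = 3+(n-1)=n+2=\indx$ and $\operatorname{coker}=0$. The splitting is not a way to ``sidestep bookkeeping''; it is the mechanism that kills the cokernel, and your kernel-counting argument cannot stand without it.
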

   
   \begin{proof}

It follows similar arguments as in \cite[Lemma~5.19]{Vi13}. Using Lemma
\ref{lem: Vi's}, we are able to split $u^* TX = T\D \oplus \fL_1 \oplus \cdots
\oplus \fL_n$, as holomorphic vector bundles where $\fL_i$ is the trivial line
bundle generated by $V_i$. Also, $u_{|\del \D}^* TL = T\del \D \oplus \RRe (\fL_1)
\oplus \cdots \oplus \RRe(\fL_n)$. As in \cite[proof of Lemma~5.19]{Vi13}, we see
that the kernel of the linearised $\bar{\del}$ operator is isomorphic to      
      
  $$ T_{\mathrm{Id}} \Aut (\D) \bigoplus_{i = 1}^{n-1} \hol((\D, \del \D), (\C,\R)) $$  
  
Hence the kernel has dimension $n + 2 = n + \mu_{\Ts}(u) = \indx$.    
      
   \end{proof}

   \begin{proof}[Proof of Lemma \ref{lem: almHom}]
     
 Since the $K$ action is holomorphic and $\del u$ is transverse to the
 $K$-orbits, we can build $u_{\theta_1, \dots, \theta_n}$ from a neighbourhood
 of $\id \in K$, satisfying all the hypothesis of Lemma \ref{lem: reg}.
   
   \end{proof}

\section{The Lagrangian tori $\Theta^n_s$} \label{sec: LagTori}

In this section we give an explicit description of the tori $\Ts$ and of its
potential function, which encodes the number of Maslov index 2 disks that $\Ts$
bounds. For a definition of the potential, we refer the reader to
\cite[Section~4]{FO310},\cite{FO3Book}. See also the definition of
superpotential in \cite[Section 2.2]{Au09}.

The tori $\Ts$ appears as fibres of a singular Lagrangian fibration analogous to
the one described in ~\cite[Example 3.3.1]{Au09}.  

\subsection{Definition of $\Ts$} \label{subsec: DfnTs}

Consider $(\CP^1)^n$ with the standard symplectic form, for which the symplectic
area of each $\CP^1$ factor is $1$. For $1 \le i \le n$, let $[x_i : y_i]$ denote the
$i$-th coordinate of $(\CP^1)^n$. Consider the function $f = \prod_i
\frac{x_i}{y_i}$, defined from the complement of $V = \bigcup_{i,j} \{x_i = 0\}
\cap \{y_j = 0\}$ to $\CP^1$, whose fibres are preserved by the $T^{n-1}$ action given by

\begin{gather} (\theta_1, \dots, \theta_{n-1})\cdot ([x_1:y_1], \dots,
[x_{n-1}:y_{n-1}], [x_n:y_n]) \nonumber \\ = ([e^{\theta_1}x_1:y_1], \dots,
[e^{i\theta_{n-1}}x_{n-1},y_{n-1}], [e^{-i\sum_j \theta_j}x_n:y_n]), \label{eq: action}
\end{gather} 

and $\m: (\CP^1)^n \to \R^{n-1}$ its moment map.

\begin{dfn} \label{def: Chekanov type tori} Let $\gamma$ be an embedded circle
on $\C^{\star}$, not enclosing $0 \in \C$, and $\lambda \in \R^{n-1}$. Define
the $\Theta^n$-type Lagrangian torus:
  
   \[\Theta^n_{\gamma, \lambda} = \{ x \in (\CP^1)^n \setminus V ; f(x) \in
   \gamma, \m(x) = \lambda\} \] 
   
\end{dfn}
 
Noting that $\m^{-1}(0) = \{|x_i/y_i| = |x_n/y_n|, \forall i = 1, \dots, n-1
\}$, one can see, by using the maximum principle, that $\Theta^n_{\gamma, 0}$
bounds only one $(n-1)$-family of holomorphic disks that project injectively to
the interior of $\gamma$. Call $\beta_\gamma \in \pi_2((\CP^1)^n,
\Theta^n_{\gamma, 0})$ the class represented by each of the above disk. We note
that there are $n$ disjoint holomorphic disks in the class $\beta_\gamma$ inside
the line $\Delta = \{[x_i:y_i] = [x_n:y_n], \forall i = 1, \dots, n-1\}$. Since
$\int_{\Delta} \omega = n$, we see that $\int_{\beta_\gamma} \omega \in (0, 1)$. 

Foliate $\C \setminus \R_{\le 0}$ by curves $\gamma_s$, $s \in [0,1)$ so that
$\gamma_0$ is a point, say $1 \in \C$, and for $s \in (0,1)$, $\gamma_s$ is an
embedded circle so that $\int_{\beta_{\gamma_s}} \omega = s$.
   
\begin{dfn} \label{def: Ts}
  Define the Lagrangian torus $\Ts$ to be $\Theta^n_{\gamma_s, 0}$. 
\end{dfn}   

The hamiltonian isotopy class of $\Ts$, does not depend in the curve $\gamma_s$ 
inside $\C \setminus \R_{\le 0}$, but only on $s = \int_{\beta_{\gamma_s}} 
\omega$.

Consider the divisor $D = f^{-1}(1) \bigcup_i \{y_i =0\}$ and the holomorphic 
$n$-form $\Omega = (\prod_i x_i - 1)^{-1} dx_1\wedge \cdots \wedge dx_n$
defined on $\PN \setminus D$, in coordinates charts $y_i = 1$. 

\begin{prp}[Auroux] \label{prp: SpecialLag} 
  The tori $\Ts$ are special Lagrangians \cite[Definition~2.1]{Au07} with respect 
  to $\Omega$
\end{prp}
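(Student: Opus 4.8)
The plan is to verify directly that $\Ts = \Theta^n_{\gamma_s, 0}$ satisfies the two defining conditions of a special Lagrangian with respect to $\Omega = (\prod_i x_i - 1)^{-1}\, dx_1 \wedge \cdots \wedge dx_n$ on $\PN \setminus D$: namely that $\Ts$ is Lagrangian for the standard symplectic form (already built into the definition), and that the restriction of $\IIm\,\Omega$ (up to a constant phase) vanishes on $\Ts$, equivalently that $\Omega|_{\Ts}$ has constant phase. First I would recall Auroux's observation \cite[Example~3.3.1]{Au07}, \cite[Section~5]{Au09} that the relevant structure is governed by the two functions $\log|f| = \log\bigl|\prod_i x_i/y_i\bigr|$ and $\arg f$, together with the $T^{n-1}$-moment map $\m$: the torus $\Ts$ lies in the level set $\{\m = 0\}$ and $\{f \in \gamma_s\}$. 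The form $\Omega$ is, up to sign and the factor $(\prod_i x_i - 1)^{-1}$, the standard holomorphic volume form $\bigwedge_i d\log x_i$ on the open torus orbit; the pole divisor $f^{-1}(1) \cup \bigcup_i\{y_i = 0\}$ is exactly $D$, so $\Omega$ is holomorphic and non-vanishing on the complement, and $\Ts \subset \PN \setminus D$ since $\gamma_s$ avoids $1 \in \C$ (it does not enclose or pass through $0$, and by construction sits in $\C \setminus \R_{\le 0}$, and $1 \notin \gamma_s$ for $s>0$ as $\gamma_0 = \{1\}$ is the degenerate case — here one restricts to $s \in (0,1)$, the torus case).

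The key computation is to change coordinates on the open orbit to $(w_1, \dots, w_{n-1}, f)$, where $w_j = x_j/y_j$ for $j = 1, \dots, n-1$ are $T^{n-1}$-equivariant coordinates and $f = \prod_i x_i/y_i$; then $\bigwedge_i d\log x_i = \pm\, d\log w_1 \wedge \cdots \wedge d\log w_{n-1} \wedge d\log f$. Writing $w_j = e^{\rho_j + i\phi_j}$ and $f = e^{\sigma + i\psi}$, one gets $\Omega = (\text{unit}) \cdot (d\rho_1 + i\,d\phi_1)\wedge \cdots \wedge (d\sigma + i\,d\psi)$ up to the nowhere-zero scalar $(\prod x_i - 1)^{-1}\prod y_i$ which I must track. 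On $\Ts$ the moment map conditions $\m = 0$ pin down the $\rho_j$ as functions of $\sigma$ (from $\m^{-1}(0) = \{|x_i/y_i| = |x_n/y_n|\}$), so $d\rho_j$ restricted to $\Ts$ is a multiple of $d\sigma$; and $\sigma = \log|f|$ is constant along $\Ts$ since $f \in \gamma_s$ forces $|f|$... — more carefully, $f$ ranges over the circle $\gamma_s$, so along $\Ts$ the pair $(\sigma, \psi)$ traces $\gamma_s$, a one-real-parameter curve, hence $d\sigma$ and $d\psi$ are proportional on $\Ts$. Therefore the pullback of $\Omega$ to $\Ts$ reduces to $(\text{scalar}) \cdot d\phi_1 \wedge \cdots \wedge d\phi_{n-1} \wedge d\psi$ (the $d\rho_j, d\sigma$ directions being either killed or absorbed), a real form times the tracked complex scalar; it remains to check that the argument of that scalar is locally constant along $\Ts$. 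This is where I expect Auroux's specific choice of curve $\gamma_s$ and divisor to matter: the factor $(\prod_i x_i - 1)^{-1}$ is precisely designed so that its phase cancels the phase variation coming from $d\log f$ along $\gamma_s$, making $\arg \Omega|_{\Ts}$ constant. Concretely, parametrizing $\gamma_s$ and differentiating, the condition $\arg\bigl((f-1)^{-1} \cdot f \cdot (\text{real factors})\bigr) = \text{const}$ along $\gamma_s$ becomes a statement about the curve $\gamma_s$ in $\C^\star$; since we have freedom in choosing $\gamma_s$ within its Hamiltonian isotopy class (the isotopy class depends only on $s = \int_{\beta_{\gamma_s}}\omega$), we may take $\gamma_s$ to be exactly the curve along which this phase is constant — this is Auroux's definition of the special Lagrangian fibration.

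The main obstacle, and the step requiring genuine care rather than bookkeeping, is the phase-cancellation check: verifying that the holomorphic volume form $\Omega$ with its particular meromorphic prefactor does restrict to a constant-phase form on the torus swept out as $f$ runs over the correctly chosen circle $\gamma_s$ while $\m = 0$. I would handle this by following \cite[Section~5]{Au09} essentially verbatim — the function $\prod_i x_i$ plays the role of the "coordinate near the anticanonical divisor," and $\Omega$ is the standard holomorphic volume form with a log pole along $D = \{\prod x_i = 1\} \cup \bigcup\{y_i = 0\}$; Auroux shows in general that for such a pair, the fibration by orbits of the Hamiltonian torus action that preserves $\arg(\prod x_i)$ and a suitable moment level is special Lagrangian, and the curves $\gamma_s$ are the projections of the fibres. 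So the proof is: (i) identify $\Omega$ as the log-pole holomorphic volume form for the divisor $D$, (ii) note $\m = 0$ and $f \in \gamma_s$ cut out exactly the orbit-type Lagrangians of Auroux's construction, (iii) invoke (or reprove by the coordinate computation above) that these are special Lagrangian, fixing the phase constant by choice of $\gamma_s$. The degenerate fibre $\gamma_0 = \{1\}$ lies on $D$ and is excluded, consistent with the form being defined only on $\PN \setminus D$; for $s \in (0,1)$ the curve $\gamma_s$ misses $D$ and the argument goes through.
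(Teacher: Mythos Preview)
Your approach is exactly what the paper does: it simply cites \cite[Example~3.3.1]{Au09} and \cite[Proposition~5.2]{Au07}, and your proposal is an unpacking of Auroux's direct phase computation. One sharpening worth making: after your coordinate change you get $\Omega|_{\Ts} = i^{n-1}\, d\phi_1 \wedge \cdots \wedge d\phi_{n-1} \wedge d\log(f-1)$, so the constant-phase condition singles out precisely the circles $|f-1| = r$ (on which $d\log|f-1| = 0$), which is the content of Auroux's Proposition~5.2 --- your proposal leaves this implicit rather than naming the curves.
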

 
 \begin{proof}
   See \cite[Example~3.3.1]{Au09} and \cite[Proposition~5.2]{Au07}.
 \end{proof}
  
Also, we clearly have:
  
\begin{prp}

We have that $(\PN,\Ts)$ is $T^{n-1}$-pseudohomogeneous, for the action
\eqref{eq: action}. 
 
\end{prp}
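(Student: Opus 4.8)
The plan is simply to verify the three requirements in Definition \ref{dfn: almHomogeneuos} for $X = \PN$ (with its product Fubini--Study K\"ahler form), $L = \Ts$, and $K = T^{n-1}$ acting by \eqref{eq: action}. That $\Ts$ is an $n$-dimensional Lagrangian torus is already recorded (Definition \ref{def: Ts}, Proposition \ref{prp: SpecialLag}), and $\dim T^{n-1} = n-1$, so what remains is to check that the action is holomorphic and Hamiltonian, that it preserves $\Ts$, and that it is free on $\Ts$.

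First I would note that each generator in \eqref{eq: action} acts on a single $\CP^1$ factor by a rotation $[x:y] \mapsto [e^{i\theta}x:y]$, which is a holomorphic automorphism of $\CP^1$; hence the whole $T^{n-1}$-action is holomorphic on $\PN$. It is Hamiltonian with moment map $\m$ by construction. Invariance of $\Ts$ follows because $\Ts = \Theta^n_{\gamma_s,0} = \{x \in \PN\setminus V : f(x)\in\gamma_s,\ \m(x)=0\}$, and both defining conditions are preserved: the fibres of $f$ are preserved by the action (as remarked just after \eqref{eq: action}), and $\m^{-1}(0)$ is preserved because the action of a group preserves the level sets of its own moment map.

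The only point needing a short argument is freeness on $\Ts$. Here I would observe that on $\Ts$ every homogeneous coordinate lies in $\C^\star$: the condition $\m(x)=0$ forces $|x_i/y_i| = |x_n/y_n|$ for all $i$, while $f(x) = \prod_i x_i/y_i \in \gamma_s \subset \C\setminus\R_{\le 0}$ forces $f(x)\ne 0,\infty$, and together these give $x_i/y_i \in \C^\star$ for every $i$. Consequently, if $(\theta_1,\dots,\theta_{n-1})$ fixes a point of $\Ts$, then $e^{i\theta_j}(x_j/y_j) = x_j/y_j$ with $x_j/y_j\ne 0$ for $j = 1,\dots,n-1$, so $e^{i\theta_j}=1$, i.e. $\theta_j = 0$ in $T^{n-1}$; hence the action is free on $\Ts$ and $(\PN,\Ts)$ is $T^{n-1}$-pseudohomogeneous. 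There is no genuine obstacle: the statement is an unwinding of the definitions, the only slightly non-formal ingredient being the $\C^\star$-coordinate observation, which is the same fact that will later pin down the Maslov index $2$ disks bounded by $\Ts$.
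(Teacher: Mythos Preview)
Your verification is correct. In the paper this proposition carries no proof at all---it is introduced with ``Also, we clearly have''---so your unwinding of Definition \ref{dfn: almHomogeneuos}, including the freeness check via $x_i/y_i \in \C^\star$ on $\Ts$, is precisely the routine verification the paper leaves to the reader.
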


\subsection{The Potential of $\Ts$} \label{subsec: PotTs}

We come back to our Lagrangian tori $\Ts$. We would like to describe the
potential $\PO^L$ in coordinates of the form \eqref{def: coord z} with
respect to a nice basis for $\pi_2(\PN, \Ts)$. Fix a point $a_s \in \gamma_s$.
Consider the $S^1$ action given by the $i$-th coordinate of the $T^{n-1}$ action
described in \eqref{eq: action}. Take the orbit lying in $\Ts \cap f^{-1}(a_s)$ and
consider its parallel transport over the segment $[0, a_s]$, formed by orbits of
the considered $S^1$ action that collapse to a point over $0$, giving rise to a
Lagrangian disk. Define $\alpha_i \in \pi_2(\Ts, \PN)$ to be the class of the
above disk. Also, from now one we write $\beta = \beta_{\gamma_s}$ and $H_i =
p_i^*[\CP^1] \in \pi_2(\PN)$ the pullback of the class of the line by the $i$-th
projection. Note that $\beta, \alpha_1, \dots, \alpha_{n-1}, H_1, \dots, H_n$
are generators of $\pi_2(\PN, \Ts)$. We assume that our monotone symplectic form
is so that $\int_{H_i} \omega = 1$.

Set $u = z_\beta$ and $w_i = z_{\alpha_i}$, $i \in (1, \dots, n-1)$. Note that
$z_{H_i}(\nabla') = T^{\int_{H_i} \omega} \exp(b \cap \del H_i) = T$.

\begin{prp}[\cite{Au07, Au09}]\label{prp: Poten} 
  
The potential function encoding the count of Maslov index 2 holomorphic
disks with boundary on the Lagrangian tori $\Theta^n_{s}$ (for some spin
structure) is given by

\begin{equation} \label{eq: PotPN} 
\PO^{\Ts} = u + \frac{T}{u}(1 + w_1 + \cdots + w_{n-1})\left(1 + \frac{1}{w_1} + \cdots + 
\frac{1}{w_{n-1}}\right)   
\end{equation}

\end{prp}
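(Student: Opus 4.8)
The plan is to compute the potential by enumerating the Maslov index $2$ holomorphic disks bounded by $\Ts$ with respect to the standard complex structure on $\PN$, establishing their regularity via the machinery of Section \ref{sec: Regularity}, and then reading off the count through the coordinates $u = z_\beta$, $w_i = z_{\alpha_i}$, $z_{H_j} = T$. First I would invoke Proposition \ref{prp: SpecialLag}: since $\Ts = \Theta^n_{\gamma_s,0}$ is special Lagrangian with respect to the holomorphic volume form $\Omega = (\prod_i x_i - 1)^{-1} dx_1 \wedge \cdots \wedge dx_n$ on $\PN \setminus D$, the wall-crossing/Maslov-index-$2$ disk count of Auroux \cite[Example~3.3.1]{Au09}, \cite[Proposition~5.2]{Au07} applies essentially verbatim. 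The divisor $D = f^{-1}(1) \cup \bigcup_i \{y_i = 0\}$ has $n+1$ irreducible components: the $n$ hyperplane components $\{y_i = 0\}$ (each defining $H_i$, with $\int_{H_i}\omega = 1$) and the ``anticanonical-type'' component $f^{-1}(1) = \{\prod_i x_i/y_i = 1\}$, which accounts for the class $\beta$. A holomorphic Maslov index $2$ disk meets $D$ once, so its class is $\beta + \sum_i k_i H_i$ or $(\sum_i k_i H_i) + \text{(boundary classes)}$ intersecting a single $\{y_j = 0\}$; minimality of area together with Proposition \ref{prp: SpecialLag} and the maximum principle (as in the remark preceding Definition \ref{def: Ts}) restricts the relevant classes to $\beta$ and the $n$ classes $H_j$ corrected by the $\alpha_i$-shifts.

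Next I would identify the $n$ ``toric-type'' disk families explicitly. For each coordinate $j \in \{1,\dots,n\}$, the disks in $\PN$ lying in the $j$-th $\CP^1$-factor through $\{y_j = 0\}$, transported over the $T^{n-1}$-orbit structure, give disk classes whose $z$-coordinates are monomials in $T$, $u$, and $w_1,\dots,w_{n-1}$; a careful bookkeeping of which multiples of $\alpha_i$ enter (keeping in mind the constraint $\theta_n = -\sum_j \theta_j$ in the action \eqref{eq: action}, which ties the $n$-th factor to the others) produces precisely the terms $\frac{T}{u}$, $\frac{T w_i}{u}$, $\frac{T}{u w_i}$, and $\frac{T w_i}{u w_k}$ — i.e. the expansion of $\frac{T}{u}(1 + \sum w_i)(1 + \sum 1/w_i)$. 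The single remaining family, projecting injectively to the interior of $\gamma_s$, contributes the term $u = z_\beta$ with $\int_\beta \omega = s$, consistent with the construction of $\gamma_s$. I would then verify that each of these disks has boundary transverse to the $T^{n-1}$-orbits — which holds away from the divisor $V$ and follows from the explicit form of the disks — so that Lemma \ref{lem: almHom} (via Lemmas \ref{lem: Vi's} and \ref{lem: reg}) guarantees all of them are regular, and the evaluation maps $\ev_0: \sM_1(\beta) \to \Ts$ have degree $1$. This regularity also confirms Assumption \ref{ass: ass} holds for the standard $J$ on $(\PN, \Ts)$, which is what lets the count assemble into the potential via \eqref{eq: Pot}.

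The main obstacle I expect is the combinatorial bookkeeping of the $\alpha_i$-corrections: matching the $n$ toric disk families (each associated to collapsing one $S^1$-orbit over $0 \in \CP^1$ in the fibration picture) against the $2n$-term product expansion requires care because the $n$-th factor plays an asymmetric role in the action \eqref{eq: action}, so one of the families must be normalized to carry the ``bare'' $\frac{T}{u}$ term while the others pick up $w_i$ or $1/w_i$ factors. A clean way to sidestep the asymmetry is to observe that the potential is intrinsically a function on $\pi_2(\PN,\Ts) \otimes \Lambda$ and to fix the basis $\{\beta, \alpha_1,\dots,\alpha_{n-1}, H_1,\dots,H_n\}$ so that the relation $\sum_j H_j$-class of the anticanonical divisor equals $2\beta + \text{(corrections)}$; then the product form falls out by the same reflection-symmetry argument Auroux uses for the Chekanov torus in $\C^n$ (the factor $\PN$ being handled one $\CP^1$ at a time, each contributing a ``$1 + T/(\text{variable})$''-type enhancement). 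Granting Auroux's computation in \cite{Au07, Au09} — which is legitimate since the setup there is the same singular Lagrangian fibration — the remaining work is essentially a change of coordinates, and no genuinely new analytic input beyond the regularity Lemma of Section \ref{sec: Regularity} is needed.
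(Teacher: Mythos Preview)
Your approach matches the paper's ``Idea of proof'': classify the Maslov index $2$ classes via positivity of intersection and the special Lagrangian property, then defer the explicit enumeration and count to Auroux \cite{Au07,Au09}, with the spin structure chosen so all disks count positively. One imprecision worth correcting: the non-$\beta$ disk classes are $H_i - \beta - \alpha_i + \alpha_j$ for $i,j \in \{1,\dots,n\}$ with the convention $\alpha_n = 0$ (so there are $n^2$ such families, not $n$, and they do not lie in a single $\CP^1$-factor as you suggest); the paper also uses positivity of intersection with $\{x_i = 0\}$---not just with the components of $D$---to nail down this list, and it separates the regularity check into the subsequent Proposition~\ref{prp: Ass} rather than folding it into the potential computation.
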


\begin{proof}[Idea of proof]
  
First we consider positivity of intersection of an holomorphic disk with the
complex submanifolds $\{x_i = 0\}$, $\{y_i = 0\}$, $\{\prod_i x_i = \prod_i
y_i\}$, for all $i \in (1, \dots, n)$, to conclude that Maslov index 2 classes
admitting holomorphic representatives must be of the form $\beta$, $H_i - \beta
- \alpha_i + \alpha_j$, where $i,j = 1, \dots, n$ and $\alpha_n = 0$.
Computations of the holomorphic disks and their algebraic count can be done
explictly. We omit here since it follows a straightforward procedure as in
~\cite[Proposition 5.12]{Au07}, see final remark after Proposition 3.3 in
\cite{Au09}. See also \cite[Section~5]{Vi13} for similar computations.  

We can choose a spin structure so that every disk counts positively, i.e.,
$\ev_0: \mathscr{M}_1 \to \Ts$ is orientation preserving, e.g. by choosing a
trivialisation of $T\Ts$ using the boundary of $\{\alpha_1,\cdots,\alpha_{n-1},
\beta\}$, as spin structure. See \cite[Section~5.5]{Vi13} and
\cite[Section~8]{Cho04}, for a complete discussion in a similar scenario. 
 
\end{proof}

\begin{rmk}
  The potential of $\Ts$ can be obtained from the known potential 
  for the Clifford torus, $\underset{n}{\times} S^1_{\OP{eq}}$. It is given by
  
\[ \PO^{\OP{Clif}} = z_1 + \cdots + z_n + \frac{T}{z_1} + \cdots + \frac{T}{z_n} .\]
  
 We obtain the potential for $\Ts$ via wall-crossing transformation $u =
 z_n(1 + w_1 + \cdots w_{n-1})$, $w_i = z_i/z_n$. See ~\cite[Example
 3.3.1]{Au09}. \end{rmk}

 \begin{prp} \label{prp: Ass}
   The tori $\Ts$ satisfy Assumption \ref{ass: ass}, with respect to the 
   standard complex structure of $\PN$.
 \end{prp}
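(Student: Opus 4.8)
The plan is to verify the two conditions \ref{ass: A1} and \ref{ass: A2} of Assumption \ref{ass: ass} for the pair $(\PN, \Ts)$ with $J_0$ the standard complex structure. For \ref{ass: A1}, I would argue via positivity of intersections. Any non-constant $J_0$-holomorphic disk $u$ with boundary on $\Ts$ has non-negative intersection number with each of the complex hypersurfaces $\{x_i = 0\}$, $\{y_i = 0\}$ for $i = 1,\dots,n$, and with the divisor $\{\prod_i x_i = \prod_i y_i\}$; since $\Ts = \Theta^n_{\gamma_s,0}$ is disjoint from all of these (the curve $\gamma_s$ encloses neither $0$ nor $\infty$, and $\m(x) = 0$ keeps us off the coordinate hyperplanes), all these intersection numbers are defined and non-negative. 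Because the Maslov index of $\beta \in \pi_2(\PN,\Ts)$ is twice the total intersection number with the anticanonical divisor $D' = \sum_i(\{x_i=0\} + \{y_i=0\})$ (or an appropriate such expression, using that $\Ts$ is special Lagrangian with respect to $\Omega$ by Proposition \ref{prp: SpecialLag}, hence the Maslov index equals $2$ times the intersection with the polar divisor of $\Omega$), every class represented by a non-constant holomorphic disk has $\mu_L(\beta) \ge 2$.

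For \ref{ass: A2}, the regularity of Maslov index $2$ disks, I would invoke Lemma \ref{lem: almHom} together with the observation, already recorded in the excerpt, that $(\PN, \Ts)$ is $T^{n-1}$-pseudohomogeneous for the action \eqref{eq: action}. By Lemma \ref{lem: almHom}, it suffices to show that every $J_0$-holomorphic Maslov index $2$ disk $u$ with boundary on $\Ts$ has boundary $\del u$ transverse to the $T^{n-1}$-orbits. By the classification in the proof of Proposition \ref{prp: Poten}, such a disk lies in one of the classes $\beta$ or $H_i - \beta - \alpha_i + \alpha_j$. For each of these families, the disks project non-trivially and injectively enough onto the relevant $\CP^1$ factors (for $\beta$, the boundary maps onto a curve tracing $\gamma_s$ under $f$, which is transverse to the level sets of $\m$; for the $H_i$-type classes, the explicit description shows the boundary winds around the relevant $S^1$-factors), so that $\del u$ is never tangent to a $T^{n-1}$-orbit. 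One then concludes regularity disk-by-disk.

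The main obstacle I anticipate is the transversality-to-orbits verification in the second part: one must genuinely use the explicit parametrizations of the Maslov index $2$ disks (as in \cite[Proposition~5.12]{Au07}, \cite[Section~5]{Vi13}) rather than a soft argument, since a priori a boundary point of a disk could lie on a degenerate orbit or have its tangent aligned with the orbit direction. A clean way to organize this: the $T^{n-1}$-action is free on $\Ts$ (part of $K$-pseudohomogeneity), and $\del u$ lies in $\Ts$; if $\del u$ were tangent to an orbit at some point, then by the maximum-principle/unique-continuation structure of holomorphic disks the whole disk would be swept by the orbit torus, forcing $u$ to lie in a proper invariant subvariety, contradicting the Maslov index $2$ count (or contradicting $\int_\beta \omega \in (0,1)$ for the class $\beta$). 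I would also double-check that the spin structure chosen in Proposition \ref{prp: Poten} is compatible, though this does not affect Assumption \ref{ass: ass} itself. Finally, I would remark that the same almost complex structure $J_0$ (or a small perturbation with identical disk count) serves for the bulk-deformed computations in later sections, so no separate regular $J$ needs to be constructed.

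\begin{proof}[Sketch]
We check Assumption \ref{ass: ass} for $(\PN, \Ts, J_0)$, $J_0$ the standard complex structure.

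Condition \ref{ass: A1}: Let $u$ be a non-constant $J_0$-holomorphic disk with boundary on $\Ts$. Since $\Ts = \Theta^n_{\gamma_s,0}$ avoids each hyperplane $\{x_i=0\}$, $\{y_i=0\}$ and the hypersurface $\{\prod_i x_i = \prod_i y_i\}$ (as $\gamma_s$ encloses neither $0$ nor $\infty$ and $\m(x)=0$), positivity of intersections gives that $u$ meets the polar divisor $D$ of $\Omega$ non-negatively. By Proposition \ref{prp: SpecialLag}, $\Ts$ is special Lagrangian for $\Omega$, so $\mu_L([u])$ equals twice the intersection number of $u$ with $D$, which is a sum of non-negative terms and is positive because $u$ is non-constant and $\Ts$ is null-homologous rel boundary in complements of these divisors only up to such intersections; hence $\mu_L([u]) \ge 2$.

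Condition \ref{ass: A2}: As noted above, $(\PN,\Ts)$ is $T^{n-1}$-pseudohomogeneous for \eqref{eq: action}, so by Lemma \ref{lem: almHom} it suffices to show that for every Maslov index $2$ $J_0$-holomorphic disk $u$, the boundary $\del u$ is transverse to the $T^{n-1}$-orbits. By the classification in the proof of Proposition \ref{prp: Poten}, $[u]$ is either $\beta$ or of the form $H_i - \beta - \alpha_i + \alpha_j$. In each case the explicit description of the disk (see \cite[Proposition~5.12]{Au07}, \cite[Section~5]{Vi13}) shows that $\del u$ projects with non-vanishing derivative onto the $T^{n-1}$-orbit directions; equivalently, if $\del u$ were tangent to an orbit at a point, unique continuation would force $u(\D)$ to lie in the orbit torus, contradicting $\int_{[u]}\omega \in (0,1)$ (for $\beta$) or the intersection-number computation (for the $H_i$-type classes). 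Thus each such $u$ is regular by Lemma \ref{lem: almHom}, proving \ref{ass: A2}.
\end{proof}
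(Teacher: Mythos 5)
Your overall strategy coincides with the paper's: condition \ref{ass: A1} via the special Lagrangian property of $\Ts$ and positivity of intersections with the polar divisor of $\Omega$, and condition \ref{ass: A2} via $T^{n-1}$-pseudohomogeneity together with Lemma \ref{lem: almHom} applied to the classified Maslov index $2$ disks. However, your argument for \ref{ass: A1} has a genuine gap at the decisive step. Positivity of intersections plus \cite[Lemma~3.1]{Au07} only yields $\mu_{\Ts}(\beta)\ge 0$ for classes with holomorphic representatives; to upgrade this to $\mu_{\Ts}(\beta)\ge 2$ for \emph{non-constant} disks you must rule out non-constant Maslov index $0$ disks, and your stated reason (that the intersection number ``is positive because $u$ is non-constant and $\Ts$ is null-homologous rel boundary \dots only up to such intersections'') is not an argument: in general a non-constant holomorphic disk with boundary on a Lagrangian disjoint from a divisor need not meet that divisor. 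The paper supplies the missing idea: a Maslov index $0$ disk avoids $D = f^{-1}(1)\cup\bigcup_i\{y_i=0\}$, so $f\circ u$ is a well-defined holomorphic map to $\C\setminus\{1\}$ with boundary on $\gamma_s$, hence constant; the disk then lies in a single regular fibre of $f$, which is biholomorphic to $(\C^*)^{n-1}$, forcing $u$ to be constant. Note also that the relevant divisor is the polar divisor of $\Omega$, namely $f^{-1}(1)\cup\bigcup_i\{y_i=0\}$, not the full toric anticanonical divisor $\sum_i(\{x_i=0\}+\{y_i=0\})$ that you propose as the default.

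For \ref{ass: A2}, your primary route (explicit description of the disks in the classes $\beta$ and $H_i-\beta-\alpha_i+\alpha_j$, whose boundaries are transverse to the orbits generated by the $\del\alpha_i$) is exactly the paper's check, and is fine. But your ``soft'' fallback is not valid: tangency of $\del u$ to a $T^{n-1}$-orbit at a single boundary point does not, via unique continuation, force $u(\D)$ into the orbit torus --- unique continuation applies to holomorphic objects agreeing to infinite order, and a $T^{n-1}$-orbit in the Lagrangian $\Ts$ is not a holomorphic curve. So the transversality verification really does have to go through the explicit disks (for instance, the $\beta$-disks project injectively to the interior of $\gamma_s$ under $f$ while the orbits are contained in fibres of $f$), as the paper does.
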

 
 \begin{proof}

 To prove Assumption \ref{ass: A1} we use similar argument as in \cite[Example~3.3.1]{Au07}.
 First we use that $\Ts$ are special Lagrangians, and hence, by
 \cite[Lemma~3.1]{Au07}, the Maslov index is twice the intersection with the
 divisor $D$. This shows that $\mu_{\Ts}(\beta) \ge 0$, $\forall \beta \in
 \pi_2(\PN,\Ts)$ represented by an holomorphic disk $u$. Now, if $u$ is a Maslov
 index $0$ holomorphic disk, then $f \circ u$ is well define and lies in $\C
 \setminus \{1\}$, hence it is a constant in $\gamma_s$. Since the regular fibres
 of $f$ are diffeomorphic to $(\C^*)^{n-1}$, we have that $u$ is itself is constant.   
 
 The proof of Assumption \ref{ass: A2} follows from $(\PN,\Ts)$ being $T^{n-1}$-pseudohomogeneous
 together with Lemma \ref{lem: almHom}. We just need to check that since the 
 $T^{n-1}$-orbit in $\Ts$ is generated by $\del \alpha_i$, therefore transverse 
 to the boundary of the Maslov index 2 disks with boundary in $\Ts$, whose relative
 homotopy classes are $\beta$ and $H_i - \beta - \alpha_i + \alpha_j$, $i,j = 1, \dots,n$
 and $\alpha_n = 0$.

 \end{proof}

 \subsection{Regarding Proposition \ref{prp:toridistinct1}, and Conjecture \ref{cnj:toridistinct}}
 
 We start noting that Maslov index 2 classes in $H_2(\PN ,\Ts;\Z)$ are of the 
 form 
 
 \begin{equation}
   \beta + k_1 (H_1 -2\beta) + \cdots + k_n(H_n - 2\beta) + l_1\alpha_1 + \cdots + 
   l_{n-1}\alpha_{n-1},
 \end{equation}
 where $\beta$ is the Maslov index 2 and $\alpha_i$ the Maslov index $0$ classes
 described in Section \ref{subsec: PotTs}, viewed in $H_2(\PN ,\Ts;\Z)$ via
 $\pi_2(\PN, \Ts) \hookrightarrow H_2(\PN ,\Ts;\Z)$. Recalling that $\int_{H_i}
 \omega = 1$ and $\int_{\alpha_i} \omega = 0$, we see that area of Maslov index
 2 disks belongs to $\{s + (1 - 2s)\Z\} \subset \R$.
 
 \begin{proof}[Proof of Proposition \ref{prp:toridistinct1}]
   We note that each torus 
   $$\Theta^{k_1}_{s_1} \times \cdots \times \Theta^{k_l}_{s_l} \times
(S^1_{\OP{eq}})^{n - \sum_i k_i}$$ 
bounds a disk of Maslov index 2 and symplectic area $1/2$, 
if $n > \sum_i k_i$, coming from a Maslov index 2 disk in the last $\CP^1$
factor, with boundary in its equator $S^1_{\OP{eq}}$. We see that $1/2$ is in
$\{s + (1 - 2s)\Z\}$ if and only if $s=1/2$. This rules out the possibility of
$\Theta^{k_1}_{s_1} \times \cdots \times \Theta^{k_l}_{s_l} \times
(S^1_{\OP{eq}})^{n - \sum_i k_i}$ being symplectomorphic to $\Ts$ for $s \ne 1/2$.

For $s = 1/2$ the torus $\Ts$ is monotone, hence the Maslov index 2
$J$-holomorphic disks becomes an invariant of its symplectomorphism class --
this was first pointed out in \cite{ElPo93}, see also \cite[Theorem~6.4]{Vi13}.
This invariant allows us to distinguish between (the symplectomorphism classes of)
$\Ts$ and $\Theta^{k_1}_{s_1} \times \cdots \times \Theta^{k_l}_{s_l} \times
(S^1_{\OP{eq}})^{n - \sum_i k_i}$. For instance, one could look for pairs
$(\sigma_1, \sigma_2)$ of (relative homotopy classes represented by) Maslov
index 2 holomorphic disks with $\del \sigma _1 = -\del \sigma_2$. For the
torus $\Ts$, we must have $\del \sigma_i = \pm \del \beta$, i.e., only one possibility
for $\del \sigma_i$ modulo sign, see Proposition
\ref{prp: Poten}. But for each torus $\Theta^{k_1}_{s_1} \times \cdots \times
\Theta^{k_l}_{s_l} \times (S^1_{\OP{eq}})^{n - \sum_i k_i}$ we have more than 
one possibility for $\del \sigma_i$, modulo sign. 
 \end{proof}

\begin{rmk} Note that, by Proposition \ref{prp: Poten}, the total number of
Maslov index 2 holomorphic disks with boundary in $\Ts$ is $1 + n^2$, while for
the tori $\Theta^{k_1}_{s_1} \times \cdots \times
\Theta^{k_l}_{s_l} \times (S^1_{\OP{eq}})^{n - \sum_i k_i}$ it is 
$\sum_{i=1}^{l} (1 + k_i^2) + 2(n - \sum_{i=1}^{l}k_i) = 2n + \sum_{i=1}^{l} (k_i - 1)^2$.
Hence they can be equal if $(n-1)^2 = \sum_{i=1}^{l} (k_i - 1)^2$. 
\end{rmk}

 \begin{rmk} The above argument also proves the monotone version ($s = 1/2$) of
 Conjecture \ref{cnj:toridistinct}.   
\end{rmk}
 
 We proceed now to show that holomorphic disks with boundary in $\Ts$ with 
 Maslov index bigger than $2$ have area bigger than $a = 1 - s$ -- the minimal area
 of Maslov index 2 holomorphic disks for $s > 1/2$. 
 
 \begin{prp}\label{prp:HighMaslovArea}
   For $k>0$ and $s \in [1/2,1)$, the area of holomorphic Maslov index $2k$ disk with boundary on $\Ts$ is  
   least $1 - s$, with respect to the standard complex structure in $\PN$. The 
   minimum only occur if $k=1$.
 \end{prp}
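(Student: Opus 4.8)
The plan is to bound the symplectic area of a holomorphic Maslov index $2k$ disk $u$ with boundary on $\Ts$ from below by decomposing $[u] \in H_2(\PN, \Ts;\Z)$ in the basis computed just above the statement, namely
\[
[u] = \beta + k_1(H_1 - 2\beta) + \cdots + k_n(H_n - 2\beta) + l_1\alpha_1 + \cdots + l_{n-1}\alpha_{n-1},
\]
so that the Maslov index being $2k$ forces $1 + \sum_i k_i \cdot 0 = k$ — wait, more precisely $\mu_{\Ts}(H_i) = 2$, $\mu_{\Ts}(\beta) = 2$, $\mu_{\Ts}(\alpha_i) = 0$, so $\mu_{\Ts}([u]) = 2 + 2\sum_i k_i$, giving $\sum_i k_i = k - 1$. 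The area is then $\int_{[u]}\omega = s + \sum_i k_i(1 - 2s) = s + (k-1)(1-2s)$, using $\int_\beta\omega = s$, $\int_{H_i}\omega = 1$, $\int_{\alpha_i}\omega = 0$. The point is that while this algebraic computation gives the area in terms of $k$ and the $k_i$'s, one must rule out negative $k_i$'s (and more generally control the coefficients) using positivity of intersection, since a priori the homology class could be represented with $k_i < 0$.

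Concretely, I would argue as follows. First, since $\Ts$ is a special Lagrangian with respect to $\Omega$ (Proposition \ref{prp: SpecialLag}), the Maslov index of any holomorphic disk equals twice its intersection number with the divisor $D = f^{-1}(1) \cup \bigcup_i\{y_i = 0\}$ (by \cite[Lemma~3.1]{Au07}, as already invoked in the proof of Proposition \ref{prp: Ass}); moreover each irreducible component of $D$ meets a holomorphic disk non-negatively by positivity of intersection. Writing $m_0 = u \cdot f^{-1}(1) \ge 0$ and $m_i = u \cdot \{y_i = 0\} \ge 0$, one gets $k = m_0 + \sum_i m_i$. Intersecting $[u]$ with these divisors and with the $\{x_i = 0\}$ translates into linear relations among the $k_i$, $l_j$, $m_0$, $m_i$; the key inequalities are that the area contributed by each $H_i$-"wrapping" is controlled by how many times the disk hits $\{x_i=0\}$ and $\{y_i=0\}$. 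Following \cite[Example~3.3.1]{Au07} (which is exactly the template for this computation, together with \cite[Section~5]{Vi13}), these constraints force $\int_{[u]}\omega \ge 1 - s$, with equality exactly when $k = 1$ and $[u] = \beta$ (for $s > 1/2$; for $s = 1/2$ the torus is monotone and all Maslov $2$ disks have area $1/2 = 1-s$, and one checks Maslov $2k$ disks have area $k/2 \ge 1/2$).

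More directly, once we know $\sum_i k_i = k - 1 \ge 0$ is the relevant constraint and that the $k_i$ can be taken to satisfy $k_i \ge 0$ (which is where positivity of intersection with $\{x_i = 0\}$, $\{y_i = 0\}$ enters — each $H_i$ in the decomposition must come from the disk genuinely wrapping the $i$-th $\CP^1$ factor, contributing a positive multiple of the area of that factor), we get
\[
\int_{[u]}\omega = s + (k-1)(1 - 2s).
\]
For $s \in [1/2, 1)$ we have $1 - 2s \le 0$, so this is minimized over $k \ge 1$; but we also need it $\ge 1 - s$. Rearranging, $s + (k-1)(1-2s) \ge 1 - s \iff 2s - 1 \ge (k-1)(2s - 1)(-1)\cdot(-1)$... let me just note $s + (k-1)(1-2s) - (1-s) = (2s-1) + (k-1)(1-2s) = (2s-1)(1 - (k-1)) = (2s-1)(2-k)$. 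For $k = 1$ this is $2s - 1 \ge 0$ so area $= s \ge 1 - s$, with the correct value $s$; actually we want area $\ge 1-s$ and indeed $s \ge 1 - s$. Hmm, but the claimed minimum is $1 - s$, achieved at $k = 1$ — so actually the minimal-area Maslov $2$ disk has area $1 - s$, not $s$. This means the relevant decomposition for the minimal disk is not $\beta$ but one of the classes $H_i - \beta - \alpha_i + \alpha_j$ from Proposition \ref{prp: Poten}, which has area $1 - s$. So the correct bookkeeping must include these: the generators of Maslov $2$ classes with holomorphic representatives are $\beta$ (area $s$) and $H_i - \beta - \alpha_i + \alpha_j$ (area $1 - s$), and a general Maslov $2k$ disk is a combination thereof plus effective curve classes, all of non-negative area; positivity of intersection bounds the number of "negative $\beta$" contributions, and the minimum of the resulting linear function over the allowed integer cone is $1-s$, attained only at $k = 1$.

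I expect the main obstacle to be precisely this last point: writing down the full system of linear constraints (from positivity of intersection with each of the divisors $\{x_i = 0\}$, $\{y_i = 0\}$, $f^{-1}(1)$) that cut out the cone of homology classes of Maslov $2k$ holomorphic disks, and verifying that the area functional restricted to this cone has minimum $1 - s$ attained only for $k = 1$. This is a finite linear-programming-type argument; the paper rightly says it follows the "straightforward procedure" of \cite[Proposition~5.12]{Au07} and \cite[Section~5]{Vi13}, and I would organize the write-up by (i) recalling $\mu = 2 (\text{intersection with } D)$ from the special Lagrangian property, (ii) listing the divisors and the positivity constraints, (iii) solving the resulting inequalities to extract the area bound, and (iv) checking the equality case pins down $k = 1$. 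I would not grind through the explicit disk parametrizations, only the homological/positivity bookkeeping.
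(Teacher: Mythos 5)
Your overall strategy---use the special Lagrangian property to identify the Maslov index with twice the intersection number with $D$, impose positivity of intersection with the components of $D$, and minimize the area over the resulting cone---is exactly the paper's. But the execution contains a genuine gap, and it begins with a computational error: you assert $\mu_{\Ts}(H_i)=2$, whereas $H_i$ is a sphere class with $\langle c_1, H_i\rangle = 2$, so $\mu_{\Ts}(H_i)=4$ and $\mu_{\Ts}(H_i - 2\beta)=0$. Consequently a Maslov index $2k$ class is not $\beta + \sum_i k_i(H_i-2\beta)+\cdots$ with $\sum_i k_i = k-1$ (with the correct indices that class always has Maslov index $2$), but rather $k\beta + \sum_i k_i(H_i-2\beta)+\sum_j l_j\alpha_j$, with the $k_i$ unconstrained by the Maslov index alone. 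Your area formula $s+(k-1)(1-2s)$ is therefore wrong; you notice that it is incompatible with the claimed minimum $1-s$ at $k=1$, but you never resolve the discrepancy---you instead defer to ``a finite linear-programming-type argument'' following \cite[Example~3.3.1]{Au07}, which is precisely the part of the proof that is not written down.

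The missing content is short. Intersecting the (corrected) class with $\{y_i=0\}$ gives $k_i\ge 0$ for each $i$ (since $\beta\cdot\{y_i=0\}=\alpha_j\cdot\{y_i=0\}=0$ and $H_j\cdot\{y_i=0\}=\delta_{ij}$), and intersecting with $\overline{f^{-1}(1)}$ gives $k-\sum_i k_i\ge 0$ (since $\beta\cdot\overline{f^{-1}(1)}=H_j\cdot\overline{f^{-1}(1)}=1$ and $\alpha_j\cdot\overline{f^{-1}(1)}=0$). The area is then $ks+\sum_i k_i(1-2s)=s\bigl(k-\sum_i k_i\bigr)+(1-s)\sum_i k_i\ge (1-s)k\ge 1-s$, using $s\ge 1-s$, and the value $1-s$ forces $k=1$. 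Without this rearrangement and the constraint coming from $\overline{f^{-1}(1)}$, your write-up establishes only the setup, not the bound.
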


\begin{proof}
  Maslov index $2k$ disks are in relative classes of the form 
  \begin{equation}\label{eq:M2k}
   k\beta + k_1 (H_1 -2\beta) + \cdots + k_n(H_n - 2\beta) + l_1\alpha_1 + \cdots + 
   l_{n-1}\alpha_{n-1}.
 \end{equation}
 
 If they are represented by holomorphic disks, their intersection with the 
 divisors $\{y_i = 0\}$ and $\{\prod_{i=1}^n x_i =  \prod_{i=1}^n y_i 
 \} = \overline{\{f^{-1}(1)\}}$ is non-negative -- recall from Definitions \ref{def: Chekanov type tori}, \ref{def: Ts}
 that $1$ is in the interior of $\gamma \subset 
 \C^*$. Noting that $$\beta \cdot \{y_i = 0\} = 0, \ \ \alpha_j \cdot \{y_i = 0\} = 0, \ \ 
 H_j \cdot \{y_i = 0\} = \delta_{ij},$$ and $$\beta \cdot \overline{\{f^{-1}(1)\}} = 1, \ \ 
 \alpha_j \cdot \overline{\{f^{-1}(1)\}} = 0, \ \  
 H_j \cdot \overline{\{f^{-1}(1)\}} = 1,$$ 
 $i,j = 1, \dots, n$, we get that
 
$$ k_i \ge 0 \ \forall i = 1, \dots, n \ \ \text{and} \ \ k - \sum_{i=1}^n k_i \ge 0. $$ 

The result follows from taking the symplectic area of $\eqref{eq:M2k}$, which is 

$$ ks + \sum_{i=1}^n k_i (1 - 2s) = s(k - \sum_{i=1}^n k_i) + (1 - s)(\sum_{i=1}^n k_i)
$$

\end{proof}

As pointed out before the above Proposition allows us to informally argue why
Conjecture \ref{cnj:toridistinct} should hold. Indeed, for $s > 1/2$, the number
of Maslov index $2$ holomorphic disks with boundary in $\Ts$ and with minimal
area $a = 1-s$ is $n^2$, by Proposition \ref{prp: Poten}. Hence the number of
Maslov index $2$ disks with boundary in $\Theta^{k_1}_{s_1} \times \cdots \times
\Theta^{k_l}_{s_l}$ and with minimal area is at most $\sum_{i=1}^l k_i^2 <
(\sum_{i=1}^l k_i)^2 = n^2$, for $l>1$.

\section{Proof of Theorem  \ref{thm: main} - Bulk deformations} \label{sec: Proof}

In this section we use bulk deformations to prove that the tori $\Ts$ are
non-displaceable for $n$ even and $s \in [1/2, 1)$, as done in \cite{FO312}
for the case $n = 2$. In \cite{FO312}, Fukaya-Oh-Ohta-Ono used the cocycle
Poincar\'{e} dual to the anti-diagonal in $\CP^1 \times \CP^1$ to bulk-deform
Floer-homology. In this section we will bulk-deform Floer-homology by an element
of the form $T^{\rho}[h] \in H^*(\PN, \Lambda_+)$, where $[h] \in H^2(\PN, \Z)$. 

For $1 \le i \le n$, let $h_i$ be the cocycle Poincar\'e dual to
$\{ y_i = 0\} \subset (\CP^1)^n$.

\begin{prp} \label{prp: Bulk def Poten}
 
The potential for the Lagrangian tori $\Theta^n_{s}$, bulk deformed by the 
cocycle $\bb = T^{\rho}[(k_1 + k_n)h_1 + \cdots + (k_{n-1} + k_n) h_{n-1} + k_n h_n ] \in 
C^2(\PN, \Lambda_+)$ is given by 
 
\[ \PO^{\Ts}_{\bb}(b) = u + \frac{T}{u}\left(1 + w_1 + \cdots + w_{n-1}\right)\left(1 +
\frac{e^{k_1T^{ \rho}}}{w_1} + \cdots + \frac{e^{k_{n-1}T^{
\rho}}}{w_{n-1}}\right)e^{k_{n}T^{ \rho}} \] 
 
\end{prp}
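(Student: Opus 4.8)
# Proof Plan for Proposition 4.2

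The plan is to compute how the bulk deformation modifies each coordinate $z_\beta$ in the potential formula \eqref{eq: PotBulked}, using the intersection numbers of the relevant relative classes with the divisors $\{y_i = 0\}$. Recall from Proposition \ref{prp: Poten} that the Maslov index $2$ classes carrying holomorphic disks are $\beta$ and $H_i - \beta - \alpha_i + \alpha_j$ for $i,j = 1,\dots,n$ with $\alpha_n = 0$, and that the undeformed potential is
\[ \PO^{\Ts} = u + \frac{T}{u}(1 + w_1 + \cdots + w_{n-1})\left(1 + \frac{1}{w_1} + \cdots + \frac{1}{w_{n-1}}\right). \]
By \eqref{eq: PotBulked}, bulk deforming by $\bb = T^\rho[\fs]$ with $\fs = (k_1 + k_n)h_1 + \cdots + (k_{n-1}+k_n)h_{n-1} + k_n h_n$ multiplies the contribution of a class $\gamma$ by $\exp[(\fs \cap \gamma)T^\rho]$. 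So the computation reduces entirely to evaluating $\fs \cap \gamma$ on each of the $1 + n^2$ classes above.

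First I would record the intersection numbers. Since $h_j$ is Poincaré dual to $\{y_j = 0\}$, we have $h_j \cap \beta = 0$ and $h_j \cap \alpha_i = 0$ for all $i,j$ (these follow, as in the proof of Proposition \ref{prp:HighMaslovArea}, from $\beta \cdot \{y_i = 0\} = 0$ and $\alpha_j \cdot \{y_i = 0\} = 0$), while $h_j \cap H_i = \delta_{ij}$. Therefore $\fs \cap \beta = 0$, so the first term $u = z_\beta$ of the potential is unchanged. For the class $\gamma_{ij} := H_i - \beta - \alpha_i + \alpha_j$ (interpreting $\alpha_n = 0$), the only contribution comes from the $H_i$ term, giving $\fs \cap \gamma_{ij} = \fs \cap H_i$. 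If $1 \le i \le n-1$ this equals $k_i + k_n$; if $i = n$ it equals $k_n$. Thus the disk in class $\gamma_{ij}$ acquires a weight $\exp[(k_i + k_n)T^\rho]$ for $i < n$ and $\exp[k_n T^\rho]$ for $i = n$, with no dependence on $j$.

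Next I would reassemble the potential. Writing $z_{\gamma_{ij}} = z_{H_i} z_\beta^{-1} z_{\alpha_i}^{-1} z_{\alpha_j} = T u^{-1} w_i^{-1} w_j$ (with $w_n := 1$), the undeformed second term is $\frac{T}{u}\sum_{i,j} w_j/w_i = \frac{T}{u}(1 + \sum_j w_j)(1 + \sum_i 1/w_i)$. After inserting the bulk weights, the term indexed by $(i,j)$ becomes $\frac{T}{u} \cdot \frac{w_j}{w_i} \cdot \exp[(\fs\cap H_i)T^\rho]$. Factoring out $\exp[k_n T^\rho]$ uniformly (present in all $H_i$ weights), what remains is a factor $e^{k_i T^\rho}$ attached to the $1/w_i$ slot for $i < n$ and $1$ for $i = n$; summing over $j$ contributes the unchanged factor $(1 + w_1 + \cdots + w_{n-1})$. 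This yields exactly
\[ \PO^{\Ts}_{\bb}(b) = u + \frac{T}{u}(1 + w_1 + \cdots + w_{n-1})\left(1 + \frac{e^{k_1 T^\rho}}{w_1} + \cdots + \frac{e^{k_{n-1}T^\rho}}{w_{n-1}}\right)e^{k_n T^\rho}, \]
as claimed.

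The main obstacle is purely bookkeeping: one must be careful that the divisor $\fs$ is a sum of \emph{codimension-2} cocycles so that the degree of the bulk-deformed $A_\infty$ operations is unaffected (as noted in the paragraph preceding \eqref{eq: PotBulked}), and that the exponential weights attach to the correct variable slots given the asymmetric role of the $n$-th factor (encoded by $\alpha_n = 0$, $w_n = 1$). A secondary point worth a sentence is that the bulk deformation does not introduce new holomorphic disk classes nor change the algebraic counts $\eta_\gamma$ — the set of Maslov index $2$ classes with holomorphic representatives and their regularity (Proposition \ref{prp: Ass}) are properties of $J$ alone — so the only effect of $\bb$ is the multiplicative factor $\exp[(\fs\cap\gamma)T^\rho]$, and the formula above is a direct substitution.
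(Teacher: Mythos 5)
Your proposal is correct and follows essentially the same route as the paper's proof: it computes the intersection numbers of $\beta$, $\alpha_j$, and $H_i$ with the divisors $\{y_k = 0\}$, concludes that only the $H_i$ part of each class $H_i - \beta - \alpha_i + \alpha_j$ contributes, and inserts the resulting weight $\exp[(\fs\cap H_i)T^\rho]$ into each monomial of the undeformed potential. The extra bookkeeping you supply (the explicit factoring of $e^{k_n T^\rho}$ and the remark that $\bb$ changes neither the disk classes nor the counts $\eta_\gamma$) is consistent with, and slightly more detailed than, the argument given in the paper.
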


\begin{proof}
  The relative classes $\beta$, $\alpha_j$ have no intersection with 
  $\{ y_k = 0\}$ viewed as a cycle in $\PN \setminus \Ts$. Therefore
  the disk in the class $H_i - \beta - \alpha_i + \alpha_j$ intersect
  $\{ y_k = 0\}$ if and only if $k=i$, and with multiplicity 1. Hence,
  the coefficient of the monomial $\sfrac{Tw_j}{uw_i}$ is bulk-deformed
  by $\bb_s$ to $e^{(k_i + k_n)T^{\rho}}$.
  
\end{proof}

\begin{lem} \label{lem: Critical Pts}
  
The potential for the Lagrangian tori $\Theta^n_{s}$, bulk deformed by the 
cocycle $\bb = T^{\rho}[(k_1 + k_n)h_1 + \cdots + (k_{n-1} + k_n) h_{n-1} + k_n h_n ] \in 
C^2(\PN, \Lambda_+)$ have its critical points given by:

\[ w_i = \epsilon_i e^{\frac{k_i}{2}T^\rho}, \: \: u = \epsilon_n e^{\frac{k_n}{2}T^\rho} T^{\frac{1}{2}} (1 + \sum_{i \ge
1}^{n-1} \epsilon_i e^{\frac{k_i}{2}T^\rho}), \]

where $\epsilon_i = \pm 1$.

\end{lem}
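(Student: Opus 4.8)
The plan is to compute the critical points of $\PO^{\Ts}_{\bb}$ directly from the formula in Proposition \ref{prp: Bulk def Poten}, differentiating with respect to the coordinates $u, w_1, \dots, w_{n-1}$ and solving the resulting system. Write $P = \PO^{\Ts}_{\bb}(b)$ and recall the critical-point equations are $u\,\partial P/\partial u = 0$ and $w_i\,\partial P/\partial w_i = 0$ for $i = 1, \dots, n-1$. First I would tidy notation by setting $A = 1 + w_1 + \cdots + w_{n-1}$ and $B = 1 + e^{k_1 T^\rho}/w_1 + \cdots + e^{k_{n-1}T^\rho}/w_{n-1}$, so that $P = u + (T/u)\,A B\, e^{k_n T^\rho}$. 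The equation $u\,\partial P/\partial u = 0$ gives $u - (T/u) A B\, e^{k_n T^\rho} = 0$, i.e. $u^2 = T\,A B\, e^{k_n T^\rho}$. The equation $w_i\,\partial P/\partial w_i = 0$ becomes $(T/u)\,e^{k_n T^\rho}\,(w_i\,\partial_{w_i}A \cdot B + A \cdot w_i\,\partial_{w_i}B) = 0$, and since $w_i\,\partial_{w_i}A = w_i$ while $w_i\,\partial_{w_i}B = -e^{k_i T^\rho}/w_i$, this reduces (dropping the nonzero prefactor) to $w_i B = (e^{k_i T^\rho}/w_i)\,A$, that is
\[ w_i^2\, B = e^{k_i T^\rho}\, A. \]

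The key observation is that the right-hand side $e^{k_i T^\rho} A$ of this last equation is independent of $i$ up to the factor $e^{k_i T^\rho}$: dividing the $i$-th equation by $e^{k_i T^\rho}$ shows that $w_i^2 e^{-k_i T^\rho} B$ is the same for all $i$, hence $w_i^2 e^{-k_i T^\rho}$ is independent of $i$ (as $B \ne 0$ at a critical point — this needs a brief check, but $B$ is a unit in the relevant Novikov ring since each $e^{k_i T^\rho}/w_i$ and the $1$ are units, or one argues $B=0$ would force $A=0$ too, contradicting $1 \in A$ being the leading term). So $w_i = \pm e^{k_i T^\rho/2}\,\cdot c$ for a common constant $c$; substituting back into any one equation $w_i^2 B = e^{k_i T^\rho} A$ forces $c^2 B = A$. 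Then I would compute $A$ and $B$ in terms of $c$ and the signs $\epsilon_i = \pm 1$: with $w_i = \epsilon_i c\, e^{k_i T^\rho/2}$ one gets $A = 1 + c\sum_i \epsilon_i e^{k_i T^\rho/2}$ and $B = 1 + c^{-1}\sum_i \epsilon_i e^{k_i T^\rho/2}$, so $c^2 B = c^2 + c\sum_i \epsilon_i e^{k_i T^\rho/2} = A$ holds identically — confirming $c = 1$ is forced only if one normalizes, but in fact the relation $c^2 B = A$ is automatically satisfied, so I must use the $u$-equation to pin down the remaining freedom. Here is the subtlety: the $w_i$-equations only determine the $w_i$ up to the overall scalar $c$, and it is $c = 1$ that is claimed; I would recheck whether the $w_i$-equations actually force $c=1$ — indeed $w_i^2 B = e^{k_i T^\rho} A$ with the substitution gives $c^2 e^{k_i T^\rho} B = e^{k_i T^\rho} A$, i.e. $c^2 B = A$, and since $A - c^2 B = 1 - c^2$ after the cancellation above, we get $c^2 = 1$, hence $c = \pm 1$, which can be absorbed into the signs $\epsilon_i$. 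This gives $w_i = \epsilon_i e^{k_i T^\rho/2}$ as claimed.

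Finally, with the $w_i$ determined, I substitute into $u^2 = T A B\, e^{k_n T^\rho}$. Using $c = 1$ we have $A = 1 + \sum_{i=1}^{n-1}\epsilon_i e^{k_i T^\rho/2} = B$, so $u^2 = T\,e^{k_n T^\rho}\,\big(1 + \sum_{i=1}^{n-1}\epsilon_i e^{k_i T^\rho/2}\big)^2$, giving $u = \epsilon_n\, T^{1/2}\, e^{k_n T^\rho/2}\,\big(1 + \sum_{i=1}^{n-1}\epsilon_i e^{k_i T^\rho/2}\big)$ with $\epsilon_n = \pm 1$, which is exactly the asserted formula. The one point requiring care — and the main (minor) obstacle — is the algebra over the Novikov ring: one must check that the square roots $T^{1/2}$, $e^{k_i T^\rho/2}$ make sense and that $A = B \ne 0$ so that the solutions genuinely lie in $(\Lambda_0 \setminus \{0\})$ (equivalently $(\Lambda_0^\times$ after the appropriate valuation normalization), which is where one uses $\rho > 0$ so that $e^{k_i T^\rho/2} = \sum_{m\ge 0}(k_i T^\rho/2)^m/m!$ converges in $\Lambda_0$ and has valuation $0$; then $A = 1 + (\text{positive valuation terms})$ is a unit. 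I would state this verification explicitly but not belabor it.
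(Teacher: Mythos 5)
Your overall strategy is the same as the paper's: write down the critical-point equations $u\,\partial_u\PO^{\Ts}_{\bb}=0$ and $w_i\,\partial_{w_i}\PO^{\Ts}_{\bb}=0$ and solve them directly. The algebra is organized slightly differently: the paper sums the $n-1$ equations to produce the quantity $L=\sum_i w_i=\sum_i b_i/w_i$ (with $b_i=e^{k_iT^\rho}$) and factors each equation as $\left(w_i-\tfrac{b_i}{w_i}\right)(1+L)=0$, then uses the $u$-equation to exclude the branch $1+L=0$; you instead read off from $w_i^2B=e^{k_iT^\rho}A$ that $w_i^2e^{-k_iT^\rho}=A/B$ is independent of $i$, introduce the common scalar $c$, and show $c^2=1$. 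Both routes land on the same answer and your computation of $c^2=1$ and of $u$ is correct.

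There is, however, one step whose justification is genuinely wrong, and it is wrong precisely in the case this paper cares about. To divide by $B$ you assert $B\neq 0$, first because ``a sum of units is a unit'' (false: $1+(-1)=0$), and then because $A$ ``has leading term $1$''; at the end you likewise claim $A=1+(\text{positive valuation terms})$ is a unit. But $A=1+\sum_i\epsilon_ie^{k_iT^\rho/2}$ has valuation-zero coefficient $1+\sum_i\epsilon_i$, which vanishes for exactly the sign choices exploited in Lemma \ref{lem: val=1/2} (for $n=2m$, with $m$ signs equal to $-1$ and $m-1$ equal to $+1$); the entire point of the construction is that the valuation of $u$ then jumps to $1/2+\rho$. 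So neither of your arguments for $B\neq0$ survives, and the concluding claim that the solutions are units of valuation zero is false in the key case. The repair is immediate and is the one the paper uses: the $u$-equation gives $u^2=Te^{k_nT^\rho}AB$, and since $u=z_\beta=T^s\exp(b\cap\partial\beta)$ is a unit multiple of $T^s$, hence nonzero, one gets $AB\neq0$ and in particular $B\neq0$. With that substitution your argument goes through; you should also note (as the paper implicitly does via the proviso in Lemma \ref{lem: val=1/2}) that sign choices making $1+\sum_i\epsilon_ie^{k_iT^\rho/2}$ identically zero must be discarded, since they would force $u=0$.
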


\begin{proof}
  
For easier notation, let $b_i = e^{k_iT^{\rho}}$. Taking the differential of the
bulk deformed potential $\PO^{\Ts}_{\bb}(b) $ with respect to $w_i$ and equating
to $0$, we get, after multiplying by $w_i$, equations 

\begin{gather} \label{eq: i} (i): w_i + \sum_{j \neq i} \frac{b_jw_i}{w_j} -
b_i(\frac{1}{w_i} + \sum_{j \neq i} \frac{w_j}{w_i}) = 0. \end{gather}

Summing all the equations $(1), \dots, (n)$, we end up with

\begin{equation*} 
  \sum_{i=1}^{n-1} w_i - \sum_{i=1}^{n-1} \frac{b_i}{w_i} = 0 
\end{equation*}



Let \[L = \sum_{i=1}^{n-1} w_i = \sum_{i=1}^{n-1} \frac{b_i}{w_i}.\] 

We have that 

\begin{gather*} w_iL - b_i = \sum_{j \neq i} \frac{b_jw_i}{w_j},\\
\frac{L}{w_i} - 1 = \sum_{j \neq i} \frac{w_j}{w_i}.
\end{gather*}

Substituting the above into equations $(i)$ (see \eqref{eq: i}), we get that

\begin{equation} \label{eq: 1 + L} 
 \left(w_i - \frac{b_i}{w_i}\right)\left(1 + L \right) = 0  
\end{equation}

So if $u, w_1, \dots w_{n-1}$ are critical points of the
bulk deformed potential $\PO^{\Ts}_{\bb}(b)$, besides equation \eqref{eq: 1 + L}, we
must have

\begin{equation}\label{eq: del_u}
 \del_u \PO^{\Ts}_{\bb} = 1 - \frac{b_nT}{u^2}(1 + L)^2 = 0  
\end{equation}

Hence $L \neq -1$, and therefore

\[ w_i = \sqrt{b_i} = \epsilon_i e^{\frac{k_i}{2}T^\rho}, \: \: u = \sqrt{b_n}T^{\frac{1}{2}}(1 + L) =
\epsilon_n e^{\frac{k_n}{2}T^\rho}T^{\frac{1}{2}} (1 + \sum_{i \ge
1}^{n-1} \epsilon_i e^{\frac{k_i}{2}T^\rho}), \]
    
\end{proof}

We call the \emph{valuation} of an element in $\Lambda_+$ the smallest exponent
with non-zero coefficient. Looking at the expression of the critical points of
the previous Lemma, one can see that:

\begin{lem} \label{lem: val=1/2}
 Looking at the critical points given on Lemma \ref{lem: Critical Pts} we have that,
 the valuation of $u$ is not $1/2$ if and only if $n = 2m$ and $m-1$ $\epsilon_i$'s are
 equal to $1$ while the other $m$ $\epsilon_i$'s are equal to $-1$, where $i = 1, \dots, 2m-1$.
 In that case, the valuation of $u$ is $T^{1/2 + \rho}$, provided $\sum_{i=1}^{2m-1} \epsilon_i k_i \ne 0$. 
\end{lem}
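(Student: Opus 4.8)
The plan is to substitute the explicit critical point from Lemma \ref{lem: Critical Pts} and track Novikov valuations factor by factor. Recall that $\rho > 0$ since $\bb \in H^2(\PN,\Lambda_+)$, and that for each $i$ the element $e^{\frac{k_i}{2}T^\rho} = 1 + \frac{k_i}{2}T^\rho + O(T^{2\rho})$ lies in $\Lambda_0$ with constant term $1$, hence has valuation $0$; in particular $\epsilon_n e^{\frac{k_n}{2}T^\rho}$ is a unit of valuation $0$. Writing $u = \epsilon_n e^{\frac{k_n}{2}T^\rho}\,T^{1/2}\,F$ with $F := 1 + \sum_{i=1}^{n-1} \epsilon_i e^{\frac{k_i}{2}T^\rho}$, additivity of the valuation gives $\mathrm{val}(u) = \tfrac12 + \mathrm{val}(F)$, so the whole statement reduces to computing $\mathrm{val}(F)$.

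Expanding in powers of $T^\rho$, $F = \left(1 + \sum_{i=1}^{n-1}\epsilon_i\right) + \tfrac{T^\rho}{2}\left(\sum_{i=1}^{n-1}\epsilon_i k_i\right) + O(T^{2\rho})$. First I would dispose of the case $1 + \sum_{i=1}^{n-1}\epsilon_i \neq 0$: here $\mathrm{val}(F) = 0$, so $\mathrm{val}(u) = \tfrac12$. Thus $\mathrm{val}(u)$ can fail to be $\tfrac12$ only when $\sum_{i=1}^{n-1}\epsilon_i = -1$. Setting $p = \#\{i : \epsilon_i = 1\}$ and $q = \#\{i : \epsilon_i = -1\}$, the relations $p + q = n-1$ and $p - q = -1$ yield $p = \frac{n-2}{2}$ and $q = \frac{n}{2}$; these are integers exactly when $n$ is even, say $n = 2m$, and then $p = m-1$, $q = m$. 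Conversely, any configuration with $m-1$ of the signs equal to $1$ and $m$ equal to $-1$ makes $1 + \sum\epsilon_i = 0$. This establishes the claimed ``if and only if''.

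Finally, in this configuration $F = \tfrac{T^\rho}{2}\left(\sum_{i=1}^{2m-1}\epsilon_i k_i\right) + O(T^{2\rho})$, and since $2\rho > \rho$ there is no cancellation at order $\rho$: if $\sum_{i=1}^{2m-1}\epsilon_i k_i \neq 0$ then $\mathrm{val}(F) = \rho$ and hence $\mathrm{val}(u) = \tfrac12 + \rho$. There is no real obstacle here; the argument is pure bookkeeping of Novikov valuations, and the only point needing a moment's care is the elementary combinatorial observation that $\sum_{i=1}^{n-1}\epsilon_i = -1$ with $\epsilon_i = \pm 1$ is solvable precisely for $n$ even — together with noting that $T^{1/2}$ factors out cleanly, so $\mathrm{val}(u)$ is never pushed below $\tfrac12$.
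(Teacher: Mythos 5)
Your proof is correct and is exactly the valuation bookkeeping the paper leaves implicit (the paper states this lemma with no written proof, only the remark that ``one can see'' it from the formula in Lemma \ref{lem: Critical Pts}). The key steps --- factoring out the unit $\epsilon_n e^{\frac{k_n}{2}T^\rho}$ and $T^{1/2}$, observing that $\mathrm{val}(F)>0$ forces $1+\sum_{i=1}^{n-1}\epsilon_i=0$, which is solvable in signs $\pm1$ precisely when $n=2m$ with $m-1$ plus signs and $m$ minus signs, and then reading off the order-$T^\rho$ coefficient $\tfrac12\sum\epsilon_i k_i$ --- are all present and correct.
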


Now we recall that 

$$u = z_\beta = T^s \exp(b \cap \del \beta)$$

for the class $\beta$ defined in the beginning of Section \ref{subsec: PotTs}. 
By Lemma \ref{lem: val=1/2}, we have:

\begin{cor} \label{cor: CritcPotTsm} Take $s> 1/2$ and consider the cocycle
$\bb_s = T^{s - 1/2}[(k_1 + k_{2m})h_1 + \cdots + (k_{2m-1} + k_{2m}) h_{2m-1} +
k_{2m} h_{2m} ] \in C^2(\Pm, \Lambda_+)$. Assume that not all $k_i$'s are $0$,
for $i= 1, \dots, 2m-1$, i.e., $[\bb_s]$ is not a multiple of the monotone
symplectic form. Then there exists $b_s$ a critical point of
$\PO^{\Tsm}_{\bb_s}$. 
\end{cor}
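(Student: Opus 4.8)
The plan is to single out, among the critical points of $\PO^{\Tsm}_{\bb_s}$ produced in Lemma~\ref{lem: Critical Pts}, exactly those that arise from an honest weak bounding cochain $b_s \in H^1(\Tsm,\Lambda_0)$ in the sense of Definition~\ref{dfn: crit Point}. Under the identifications used to set up the potential (Sections~\ref{subsec: DfnTs}--\ref{subsec: PotTs}) one has $u = z_\beta = T^s\exp(b_s\cap\del\beta)$ with $\int_\beta\omega = s$, and $w_i = z_{\alpha_i} = \exp(b_s\cap\del\alpha_i)$ with $\int_{\alpha_i}\omega = 0$. Hence a tuple of values $(u,w_1,\dots,w_{2m-1})$ in $\Lambda$ comes from a genuine $b_s\in H^1(\Tsm,\Lambda_0)$ if and only if each $w_i$ and $T^{-s}u$ has valuation $0$ (so that $\log$ is defined and lands in $\Lambda_0$). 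So the task reduces to checking which of the critical points of Lemma~\ref{lem: Critical Pts} satisfy this.

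First I would observe that for the critical points $w_i = \epsilon_i e^{\frac{k_i}{2}T^\rho}$, $u = \epsilon_{2m}e^{\frac{k_{2m}}{2}T^\rho}T^{1/2}\bigl(1+\sum_{i=1}^{2m-1}\epsilon_i e^{\frac{k_i}{2}T^\rho}\bigr)$ with $\rho = s-\tfrac12 > 0$, each $w_i$ has constant term $\epsilon_i=\pm1$ and so automatically has valuation $0$; the only constraint is on $u$. Since $s>\tfrac12$ we have $s\ne\tfrac12$, so requiring $T^{-s}u$ to have valuation $0$ means requiring $u$ to have valuation $s = \tfrac12+\rho \neq \tfrac12$; by Lemma~\ref{lem: val=1/2} this holds precisely when we are in the special sign configuration --- exactly $m-1$ of $\epsilon_1,\dots,\epsilon_{2m-1}$ equal $+1$ and the remaining $m$ equal $-1$ --- and in addition $\sum_{i=1}^{2m-1}\epsilon_i k_i\ne0$. (In the generic configuration $u$ has valuation $\tfrac12<s$, and in the special configuration with $\sum_i\epsilon_i k_i=0$ it has valuation $>s$; neither gives an element of $\Lambda_0$.) Conversely, in the special configuration with $\sum_i\epsilon_i k_i\ne0$, a short computation --- using $1+\sum_{i=1}^{2m-1}\epsilon_i = 0$ --- gives $T^{-s}u = \tfrac12\epsilon_{2m}\bigl(\sum_{i=1}^{2m-1}\epsilon_i k_i\bigr) + O(T^\rho)$, which has nonzero constant term; setting $b_s\cap\del\beta := \log(T^{-s}u)$ and $b_s\cap\del\alpha_i := \log(w_i)$ then produces the desired critical point $b_s\in H^1(\Tsm,\Lambda_0)$ (which via Corollary~\ref{cor: PotFloerHom} will then give $HF(\Tsm,(b_s,\bb_s);\Lambda)\cong H(\Tsm;\Lambda)$).

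It remains to show that, under the standing hypothesis that $(k_1,\dots,k_{2m-1})\neq(0,\dots,0)$, there is a choice of signs $\epsilon_1,\dots,\epsilon_{2m-1}\in\{\pm1\}$ with exactly $m-1$ of them equal to $+1$ and $\sum_{i=1}^{2m-1}\epsilon_i k_i\ne0$. This is the only genuinely non-formal step, though it is elementary, and I expect it to be the one place requiring an actual argument rather than bookkeeping with valuations. For $m=1$ there is a unique admissible configuration, $\epsilon_1=-1$, and $\sum_i\epsilon_i k_i = -k_1\neq0$ since $k_1\neq0$ (this recovers the case $n=2$ of \cite{FO312}). For $m\ge2$ I would argue by contradiction: if $\sum_i\epsilon_i k_i=0$ for every admissible sign vector, then for any two indices $a\neq b$ one can pick an admissible vector with $\epsilon_a=+1$, $\epsilon_b=-1$ (possible since $m-1\ge1$), swap these two entries to obtain another admissible vector, and subtract the two vanishing relations to get $k_a=k_b$; thus all the $k_i$ equal a common value $c$, and then $\sum_i\epsilon_i k_i = c\bigl((m-1)-m\bigr)=-c=0$ forces $c=0$, contradicting the hypothesis. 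This yields the required signs $\epsilon_i$ and completes the proof.
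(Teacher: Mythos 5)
Your proof is correct and follows the same route as the paper: the paper deduces the corollary directly from Lemma \ref{lem: Critical Pts} and Lemma \ref{lem: val=1/2} by noting that $u = z_\beta = T^s\exp(b\cap\del\beta)$ forces the valuation of $u$ to equal $s = 1/2+\rho$, which is exactly your criterion. The only difference is that you make explicit two steps the paper leaves implicit --- the characterisation of which critical points of the Laurent potential come from an honest $b_s\in H^1(\Tsm,\Lambda_0)$, and the elementary swapping argument showing that some admissible sign vector has $\sum_i\epsilon_i k_i\ne 0$ when the $k_i$ are not all zero --- both of which are correct and welcome additions.
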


Recalling that $\Tsm$ satisfy Assumption \ref{ass: ass} (Propositions \ref{prp: Ass}), for some almost complex structure $J$, and noting that
$\Tsm$ is a contractible Lagrangian torus of $\Pm$, we have that $(\Pm,\Tsm)$
satisfy all the hypothesis of Corollary \ref{cor: PotFloerHom}. Therefore, from
Corollaries \ref{cor: PotFloerHom} and \ref{cor: CritcPotTsm}, we deduce:

\begin{thm} \label{thm: FloerHomTsm}
  For $s \ge 1/2$ there exists a bulk $[\bb_s] \in H^2(\Pm, \Lambda_+)$ and a weak 
  bounding cochain $b_s \in H^1(\Tsm,\Lambda_0)$ such that $HF(\Tsm, 
  (b_s,\bb_s);\Lambda_{0,nov}) \cong H(\Tsm,\Lambda_{0,nov})$.
\end{thm}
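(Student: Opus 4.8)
The plan is to combine the three ingredients already assembled in this section, essentially bookkeeping the hypotheses of Corollary \ref{cor: PotFloerHom}. First I would treat the case $s > 1/2$. By Corollary \ref{cor: CritcPotTsm}, choosing $n = 2m$ and any $(k_1, \dots, k_{2m})$ with not all $k_i$ ($i = 1, \dots, 2m-1$) equal to zero and $\sum_{i=1}^{2m-1}\epsilon_i k_i \ne 0$ for the sign pattern described in Lemma \ref{lem: val=1/2}, we obtain a cocycle $\bb_s = T^{s-1/2}[\dots] \in C^2(\Pm, \Lambda_+)$ and an element $b_s$ which is a critical point of the bulk-deformed potential $\PO^{\Tsm}_{\bb_s}$; the point of Lemma \ref{lem: val=1/2} is exactly that this forces $z_\beta = u$ to have valuation $1/2 + \rho = s$, matching $\int_\beta\omega = s$, so that $b_s$ lies in $H^1(\Tsm, \Lambda_0)$ and genuinely decorates the torus $\Tsm$ (and not $\Theta^{2m}_{1/2}$).

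Next I would verify that $(\Pm, \Tsm)$ satisfies all the hypotheses of Corollary \ref{cor: PotFloerHom}: Assumption \ref{ass: ass} holds by Proposition \ref{prp: Ass} (for the standard complex structure, or a regular $J$ with the same potential); the cohomology ring $H^*(\Tsm; \Lambda_0)$ of a torus is generated by $H^1$; and the topological conditions $\pi_1(\Tsm) \cong H_1(\Tsm, \Z)$ and $\pi_2(\Pm, \Tsm) \cong \pi_2(\Pm) \oplus H_1(\Tsm, \Z)$ hold since $\Tsm$ is a contractible Lagrangian torus in the simply connected $\Pm$ — the basis $\{\alpha_1, \dots, \alpha_{2m-1}, \beta\}$ of relative classes with the required boundary behaviour was already identified in Section \ref{subsec: PotTs}. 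Granting these, Corollary \ref{cor: PotFloerHom} applied to $(X, L) = (\Pm, \Tsm)$ with the critical point $b_s$ and bulk $\bb_s$ immediately yields $HF(\Tsm, (b_s, \bb_s); \Lambda_{0,nov}) \cong H(\Tsm; \Lambda_{0,nov})$.

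Finally I would dispose of the boundary case $s = 1/2$, where $\Theta^{2m}_{1/2} = \Theta^{2m}$ is monotone: here one takes $\bb = 0$ (so $\rho$ is irrelevant) and observes that the undeformed potential \eqref{eq: PotPN} already has a critical point, namely the point given by Lemma \ref{lem: Critical Pts} with all $k_i = 0$, i.e. $w_i = \epsilon_i$, $u = \epsilon_n T^{1/2}(1 + \sum_i \epsilon_i)$ for a sign pattern making $1 + \sum_i \epsilon_i \ne 0$; this again feeds into Corollary \ref{cor: PotFloerHom}. I do not expect any serious obstacle: the real content of the theorem is packaged in Lemma \ref{lem: Critical Pts} and Lemma \ref{lem: val=1/2} (finding critical points whose $u$-coordinate has the correct valuation) and in Corollary \ref{cor: PotFloerHom} (critical point $\Rightarrow$ non-vanishing Floer cohomology), both already established; the one point that needs a word of care is making sure the chosen $k_i$ satisfy simultaneously the non-triviality condition of Corollary \ref{cor: CritcPotTsm} and the condition $\sum \epsilon_i k_i \ne 0$ of Lemma \ref{lem: val=1/2}, which is clearly arrangeable (e.g. take all the $\epsilon_i k_i$ of the same sign and not all zero).
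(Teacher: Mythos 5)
Your proposal is correct and follows essentially the same route as the paper: feed the critical point produced by Corollary \ref{cor: CritcPotTsm} (via Lemmas \ref{lem: Critical Pts} and \ref{lem: val=1/2}) into Corollary \ref{cor: PotFloerHom}, after checking Assumption \ref{ass: ass} through Proposition \ref{prp: Ass} and the topological hypotheses from contractibility of $\Tsm$ in $\Pm$. The one point where you go slightly beyond the paper is the explicit treatment of $s=1/2$ with $\bb=0$ and the sign pattern making $1+\sum_i\epsilon_i\ne 0$; the paper's stated Corollary \ref{cor: CritcPotTsm} only covers $s>1/2$, so this is a welcome (and correct) completion rather than a divergence.
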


This proves the first part of Theorem \ref{thm: heavy}.
Theorem \ref{thm: main} follows from Theorem \ref{thm: FOOOnonDisp} and 
Theorem \ref{thm: FloerHomTsm}. $\qed$ 

Corollary \ref{cor: ProductTori} follows from the same arguments as above using 
that

$$ \PO^{\Theta^{k_1}_{s_1} \times \cdots \times \Theta^{k_l}_{s_l} \times
(S^2_{\OP{eq}})^{n - \sum_i k_i}}_{ \bb} = \PO^{\Theta^{k_1}_{s_1}}_{\bb} + \cdots + \PO^{\Theta^{k_l}_{s_l}}_{ \bb} 
+ \PO^{(S^2_{\OP{eq}})^{n - \sum_i k_i}}_{\bb} \qed$$



\section{Quasi-morphisms and quasi-states} \label{sec: Quasimorph}

In this section we prove the last part of Theorem \ref{thm: heavy}. It follows 
arguments similar to \cite[Theorem~23.4]{FO311b}. 

\begin{lem} \label{lem: QuantCohom}
  For any $\bb = T^{\rho}[l_1 h_1 + \cdots + l_{n-1} h_{n-1} + l_n h_n ] \in 
C^2(\PN, \Lambda_+)$, the bulk deformed Quantum cohomology \cite[Section~5]{FO311b} is semi-simple. 
\end{lem}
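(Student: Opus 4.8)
The plan is to reduce the bulk-deformed quantum cohomology of $\PN$ to that of $\CP^1$ via the K\"unneth formula and then check semi-simplicity factor by factor. First I would observe that $\PN = \CP^1 \times \cdots \times \CP^1$, that $QH(\PN;\Lambda)$ (ordinary small quantum cohomology) splits as the $\Lambda$-algebra tensor product $QH(\CP^1;\Lambda)^{\otimes n}$, and that the bulk class $\bb = T^\rho[l_1 h_1 + \cdots + l_n h_n]$ is a sum $\bb = \bb_1 + \cdots + \bb_n$ where $\bb_i = T^\rho l_i h_i$ is pulled back from the $i$-th factor. Because the bulk deformation of the quantum product is defined by inserting the exponential of the bulk class into the Gromov--Witten invariants, and because the relevant genus-$0$ invariants of a product are products of invariants of the factors (the divisor axiom handles the insertions of $h_i$, which is a degree-$2$ class pulled back from the $i$-th factor), the bulk-deformed product on $QH_{\bb}(\PN;\Lambda)$ is again the tensor product of the bulk-deformed products $QH_{\bb_i}(\CP^1;\Lambda)$ over the $n$ factors.

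Next I would compute $QH_{\bb_i}(\CP^1;\Lambda)$ explicitly. Writing $p$ for the point class in $H^*(\CP^1)$, the ordinary quantum relation is $p * p = T \cdot 1$ (with our normalization $\int_{[\CP^1]}\omega = 1$), so $QH(\CP^1;\Lambda) \cong \Lambda[p]/(p^2 - T)$. Bulk-deforming by $T^\rho l_i h_i$ only rescales the quantum parameter by a unit $e^{l_i T^\rho} \in \Lambda_0^{\times}$ (again by the divisor axiom), giving $QH_{\bb_i}(\CP^1;\Lambda)\cong \Lambda[p]/(p^2 - e^{l_i T^\rho}T)$. Since $e^{l_i T^\rho}T$ is a square in $\Lambda$ — indeed $\Lambda$ is algebraically closed, or more concretely $\sqrt{T}\in\Lambda$ and $e^{l_i T^\rho/2}\in\Lambda_0^\times$ — this quotient factors as $\Lambda[p]/(p - c_i) \times \Lambda[p]/(p + c_i)$ with $c_i = e^{l_i T^\rho/2}\sqrt{T}$, hence is a product of two copies of the field $\Lambda$. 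Thus each factor $QH_{\bb_i}(\CP^1;\Lambda)$ is semi-simple, being a direct product of fields.

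Finally, a tensor product (over a field $\Lambda$) of finitely many semi-simple commutative $\Lambda$-algebras is semi-simple: each factor is a finite product of copies of $\Lambda$ itself, so the tensor product is a finite product of copies of $\Lambda$, i.e. isomorphic to $\Lambda^{2^n}$, which is manifestly semi-simple. Therefore $QH_{\bb}(\PN;\Lambda)$ is semi-simple. I expect the main technical point to be the justification that the bulk deformation respects the K\"unneth decomposition and that inserting the divisor classes $h_i$ merely rescales the quantum parameter of the corresponding factor; this is precisely the divisor axiom for (bulk-deformed) genus-$0$ Gromov--Witten invariants, and for $\PN$ it can be checked directly since the only contributing curve classes are the lines $H_i$ in each factor, each meeting $h_i$ once and $h_j$ ($j\ne i$) not at all. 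Everything else is the elementary algebra of splitting $\Lambda[p]/(p^2-\text{square})$.
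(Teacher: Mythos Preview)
Your proof is correct but follows a genuinely different route from the paper. The paper invokes the isomorphism between the bulk-deformed quantum cohomology of a toric manifold and the Jacobian ring of its bulk-deformed toric potential (Fukaya--Oh--Ohta--Ono \cite[Theorem~1.1.1]{FO316}, \cite[Theorem~6.1]{FO310}), and then simply observes that the toric potential
\[
\PO_\bb = \sum_{i=1}^n \Bigl( z_i + \frac{T e^{l_i T^\rho}}{z_i} \Bigr)
\]
has exactly $2^n$ non-degenerate critical points $z_i = \pm T^{1/2} e^{l_i T^\rho/2}$; non-degeneracy makes the Jacobian ring a product of $2^n$ copies of $\Lambda$, hence semi-simple.

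Your approach via K\"unneth and the divisor axiom is more elementary in that it sidesteps the Jacobian-ring isomorphism, relying instead on standard facts about genus-zero Gromov--Witten invariants of products and the effect of degree-two bulk insertions. The trade-off is that you must justify that the bulk-deformed quantum product respects the K\"unneth decomposition when $\bb$ is a sum of classes pulled back from the factors; this is true for the reason you indicate (the only curve classes are the $H_i$, and $h_j \cdot H_i = \delta_{ij}$), but in the paper's framework it is not isolated as a lemma and would need a sentence or a reference. The paper's route has the incidental advantage that the idempotents are explicitly labelled by critical points of the same Laurent-polynomial shape that governs the Lagrangian potential, which dovetails with the surrounding narrative; but Proposition~\ref{prp: idempotent} only uses semi-simplicity and unitality of $i^*_{\qm}$, so your argument is entirely adequate for what follows.
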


\begin{proof} By \cite[Theorem~1.1.1]{FO316} (see also
\cite[Theorem~6.1]{FO310}, for the Fano case) we have an isomorphism between the
bulk deformed Quantum cohomology of a toric symplectic manifold and the Jacobian
Ring of the bulk deformed toric potential. If the bulk deformed toric potential
has only non-degenerate critical points, we can split the Quantum cohomology
ring into orthogonal algebra summands according to the factors corresponding to
the critical points under the isomorphism given in \cite[Theorem~1.1.1]{FO316}.

Naming now $z_i = z_{\beta_i}$ \eqref{def: coord z}, for $\beta_i$ the class of 
Maslov index 2 holomorphic disk intersecting $\{x_i = 0\}$, we have that the 
bulk deformed potential of a toric fiber is:

\begin{equation} \label{eq: toricBulkPot}
  \PO_\bb = z_1 + \cdots + z_n + \frac{Te^{l_1T^\rho}}{z_1} + \cdots + 
  \frac{Te^{l_nT^\rho}}{z_n},
\end{equation}
whose critical points are given by $(z_1, \dots, z_n) = (\epsilon_1 T^{1/2} 
e^{l_1T^\rho/2}, \dots , \epsilon_n T^{1/2} e^{l_nT^\rho/2})$. Hence, there
are $2^n$ idempotents of $QH_\bb(\PN;\Lambda_{0,nov})$, $\be_1^\bb$, \dots, 
$\be_{2^n}^\bb$ for which

$$QH_\bb(\PN;\Lambda_{0,nov}) = \bigoplus_{i=1}^{2^n} \Lambda_{0,nov} \be_i^\bb  .$$
  
\end{proof}

In \cite[Section~17, (17.18)]{FO311b}, given $X$ a symplectic manifold
and $L$ a relatively spin Lagrangian submanifold, Fukaya-Oh-Ohta-Ono construct an homomorphism:

\begin{equation} \label{eq: iqmHom}
  i^*_{\qm,(b,\bb)}: QH_\bb(X;\Lambda_{0,nov}) \to HF(L, (b,\bb); \Lambda_{0,nov}),
\end{equation}
which is proven to be a ring homomorphism in \cite{AFO3Prep1}, see 
\cite[Remark~17.16]{FO311b} and \cite[Section~4.7]{FO310c}.

Applying Lemma \ref{lem: QuantCohom} for $\Pm$ and $\bb_s$ given in Theorem
\ref{thm: FloerHomTsm}, using that $i^*_{\qm,(b_s,\bb_s)}$ is unital and $HF(\Tsm,(b_s,\bb_s); \Lambda_{0,nov}) 
\ne 0$, we have:

\begin{prp} \label{prp: idempotent}
  There exists an idempotent $\be_s \in QH_{\bb_s}(\Pm;\Lambda_{0,nov})$ for which $i^*_{\qm,(b_s,\bb_s)}(\be_s) \ne 0$
  in $HF(\Tsm,(b_s,\bb_s); \Lambda_{0,nov})$.
\end{prp}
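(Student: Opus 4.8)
The plan is to combine the semisimplicity statement of Lemma~\ref{lem: QuantCohom} with the ring-homomorphism property of the map \eqref{eq: iqmHom} and the non-vanishing of $HF(\Tsm,(b_s,\bb_s);\Lambda_{0,nov})$ established in Theorem~\ref{thm: FloerHomTsm}. First I would write $QH_{\bb_s}(\Pm;\Lambda_{0,nov}) = \bigoplus_{i=1}^{2^{2m}} \Lambda_{0,nov}\,\be_i^{\bb_s}$ as in Lemma~\ref{lem: QuantCohom}, so that the unit decomposes as $\mathds{1} = \sum_i \be_i^{\bb_s}$ with $\be_i^{\bb_s}\be_j^{\bb_s} = \delta_{ij}\be_i^{\bb_s}$. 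Since $i^*_{\qm,(b_s,\bb_s)}$ is a unital ring homomorphism (unitality from \cite[Section~17]{FO311b}, the ring property from \cite{AFO3Prep1} as cited in \cite[Remark~17.16]{FO311b}), it sends $\mathds{1}$ to the unit of $HF(\Tsm,(b_s,\bb_s);\Lambda_{0,nov})$, which is nonzero precisely because the Floer cohomology is isomorphic to $H(\Tsm;\Lambda_{0,nov})$ by Theorem~\ref{thm: FloerHomTsm}.

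Then I would argue by contradiction: if $i^*_{\qm,(b_s,\bb_s)}(\be_i^{\bb_s}) = 0$ for every $i$, then applying the (additive) homomorphism to $\mathds{1} = \sum_i \be_i^{\bb_s}$ would give $i^*_{\qm,(b_s,\bb_s)}(\mathds{1}) = 0$, contradicting unitality together with the fact that the unit of $HF(\Tsm,(b_s,\bb_s);\Lambda_{0,nov})$ is nonzero. Hence at least one idempotent, call it $\be_s := \be_i^{\bb_s}$, satisfies $i^*_{\qm,(b_s,\bb_s)}(\be_s) \ne 0$, which is exactly the assertion. (For the borderline case $s = 1/2$ one takes $\bb_s = 0$, or rather the monotone bulk, where the analogous semisimplicity of the quantum cohomology of $\Pm$ is classical; the same argument applies verbatim.)

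The only genuine subtlety — rather than an obstacle — is making sure the map \eqref{eq: iqmHom} is being used as a \emph{ring} homomorphism, not merely a module map, since additivity alone does not let us transfer idempotents; this is why the reference to \cite{AFO3Prep1} and \cite[Remark~17.16]{FO311b} is essential. A secondary point to state carefully is that ``nonzero idempotent image'' should be read in $HF(\Tsm,(b_s,\bb_s);\Lambda_{0,nov})\otimes_{\Lambda_{0,nov}}\Lambda_{nov}$ if one wants to match the setup of Theorem~\ref{thm: FOOOnonDisp}; since the Floer cohomology here is free of rank $2^{2m}$ over $\Lambda_{0,nov}$, tensoring changes nothing and the image of the unit remains nonzero. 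Everything else is a direct unwinding of definitions, so I would keep the proof to these two short paragraphs.
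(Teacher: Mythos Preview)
Your proof is correct and matches the paper's argument exactly: semisimplicity (Lemma~\ref{lem: QuantCohom}) decomposes the unit as a sum of orthogonal idempotents, and unitality of $i^*_{\qm,(b_s,\bb_s)}$ together with $HF(\Tsm,(b_s,\bb_s);\Lambda_{0,nov}) \ne 0$ then forces some idempotent to have nonzero image. One small over-statement in your commentary: the ring-homomorphism property is not actually needed for this particular step---additivity and unitality alone suffice, since the proposition only asks that the image of $\be_s$ be nonzero, not that it remain an idempotent.
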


Theorem \ref{thm: heavy} follows then from Proposition \ref{prp: idempotent} and 
Theorem 18.8 of \cite{FO311b}. \qed

\section{Tori in $\BlIII$} \label{sec: Bl3}

In this section we prove Theorem \ref{thm: Bl3heavy}. We will describe a model
for $(\BlIII, \omega_\epsilon) = (\PxPBlII, \omega_\epsilon)$ which is
equivalent to performing two blowups of capacities $\epsilon$ centred at the
rank 0 elliptic singularities (corners) of the singular fibration of $\PxP$
described in \cite{FO312}, see Figure \ref{fig: Bl3}.

\begin{figure}[h!]   
  
\begin{center}

\centerline{\includegraphics[scale=0.4]{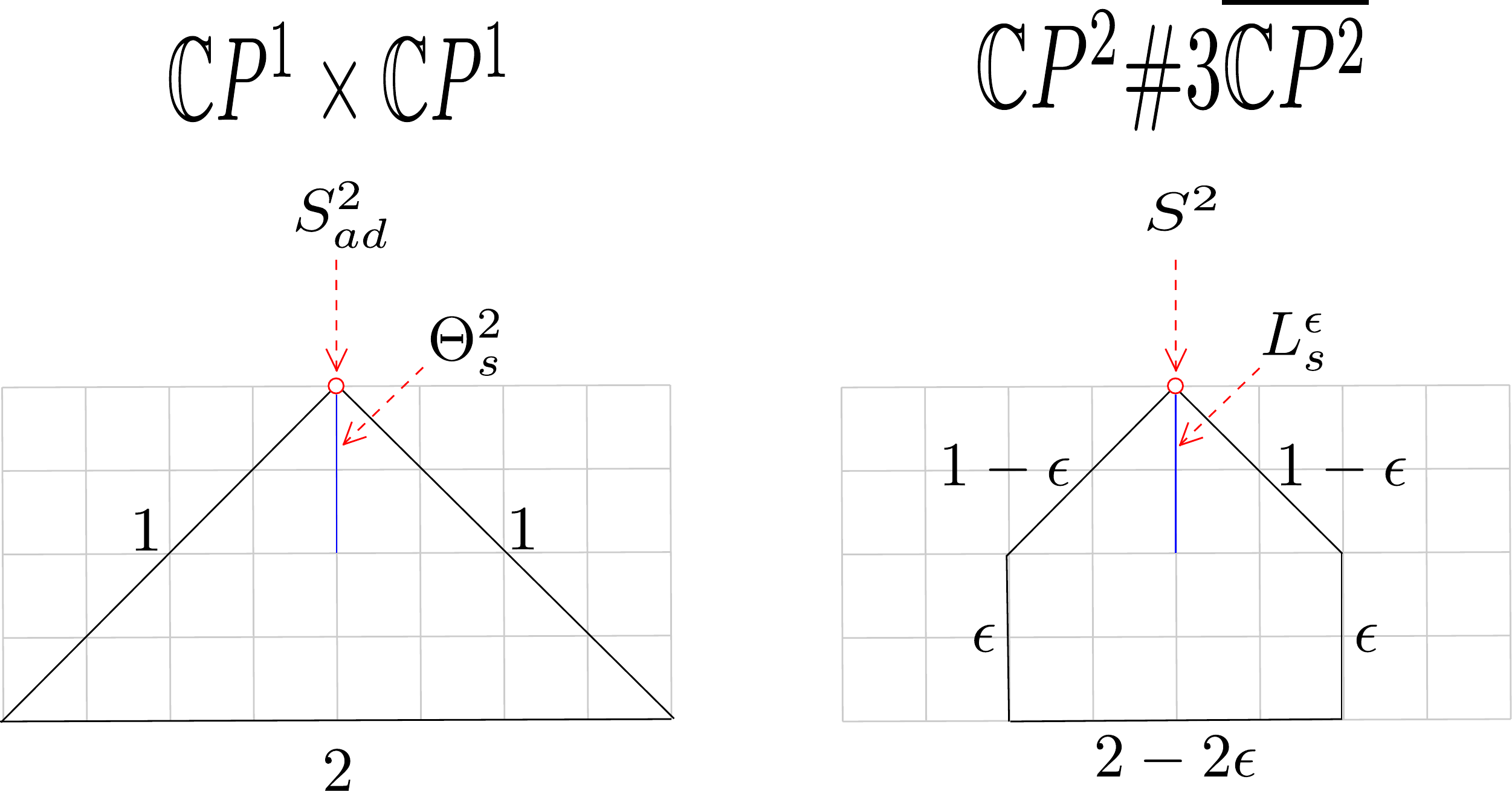}}

\caption{Singular fibrations of $\PxP$ and $\BlIII$.}
\label{fig: Bl3} 

\end{center} 
\end{figure}

Consider $\PxP$ with coordinates $([x_1:y_1], [x_2:y_2])$ as in Section
\ref{subsec: DfnTs}. Consider also the tori $\Theta^2_s$, the function $f =
x_1x_2/y_1y_2$, the relative class $\beta$ and $\alpha := \alpha_1$ and the
divisor $D = f^{-1}(1)\cup \{y_1= 0\} \cup \{y_2 = 0\}$, as defined in Section
\ref{subsec: PotTs}. 

From Proposition \ref{prp: SpecialLag} and \cite[Lemma~3.1]{Au07},
we have that $2[D] \in H_2(\PxP \setminus \Theta^2_s)$ is Poincar\'e dual to the
Maslov class $\mu_{\Theta^2_s} \in H^2(\PxP,\Theta^2_s)$. In particular the
Maslov index 2 holomorphic disks, computed in Proposition \ref{prp: Poten} for $n=2$,
do not intersect $\overline{f^{-1}(1)} \cap \{y_1= 0\} = ([1:0],[0:1]) = p_1$ and
$\overline{f^{-1}(1)} \cap \{y_2= 0\} = ([0:1],[1:0]) = p_2$.  

Let $B_i(\epsilon)$ be the ball of capacity \cite[Section~12]{MDSaBook_SympTop}
$\epsilon$ (radius $\sqrt{\epsilon / \pi}$) centered at $p_i$, in the coordinate
plane $x_i = 1$, $y_j = 1$, $i,j = 1 ,2$, $i \ne j$. Denote $S_i(\epsilon) =
\del B_i(\epsilon)$. Let $(\BlIII, \omega_\epsilon)$ be the result of blowing up
\cite[Section~7]{MDSaBook_SympTop} $\PxP$ with respect to $B_1(\epsilon)$ and
$B_2(\epsilon)$, so that the exceptional curves $E_i$ (coming from collapsing
the Hopf fibration in $S_i(\epsilon)$) have symplectic area $\omega_\epsilon
(E_i) = \epsilon$, $i = 1,2$. Let $j_\epsilon$ be the induced complex structure
and $L_s^\epsilon$ correspond to $\Theta_s^2$ after the blowup. Note that
$\epsilon$ can take any value in $(0,1)$, so that $B_1(\epsilon) \cap
B_2(\epsilon) = \emptyset$.

Note also that $f = x_1x_2/y_1y_2$ is constant along the fibers of the Hopf
fibration of both $S_1(\epsilon)$ and $S_2(\epsilon)$. In particular it give
rise to a $(j_\epsilon,j)$-holomorphic function $\tilde{f}: \BlIII \to \CP^1$. 

For computing the potential for $L_s^\epsilon$ it is interesting that the disks
of Proposition \ref{prp: Poten}, remain essentially the same. This can be
obtained by stretching the complex structure $j_\epsilon$. So take $\delta$ small
enough so that $B_1(\delta) \cup B_2(\delta)$ does not intersect any Maslov
index 2 holomorphic disk. Consider a diffeomorphism $\varphi: (\BlIII,
\omega_\epsilon) \to (\BlIII, \omega_\delta)$ coming from a finite neck stretch
\cite{EliGiHo10,CompSFT03} along $S_i(\epsilon + \delta') \subset (\BlIII,
\omega_\epsilon)$ \cite{CompSFT03,EliGiHo10}, see also \cite[Section~3]{Vi14},
which sends $L_s^\epsilon$ to $L_s^\delta$. The diffeomorphism $\varphi$ is
equivalent to considering an inflation along the exceptional curves $E_i$, $i = 
1,2$. Set $J_\delta = \varphi^*j_\delta$, an $\omega_\epsilon$ compatible 
almost complex structure.

\begin{lem} \label{lem: potBl3}
  We have that $(\BlIII,L_s^\epsilon,J_\delta)$ satisfy Assumption \ref{ass: ass}. 
  The potential function for $L_s^\epsilon$ with respect to $J_\delta$, is given by:
  
  \begin{equation}
    \PO^{L_s^\epsilon} = u + \frac{T}{u}(1 + w)(1 + \frac{1}{w}) + T^{1 - 
    \epsilon}(w + \frac{1}{w})
  \end{equation}

\end{lem}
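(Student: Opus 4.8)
The plan is to compute the potential of $L_s^\epsilon$ by comparing the holomorphic disks bounded by $\Theta^2_s \subset \PxP$ with those bounded by $L_s^\epsilon \subset \BlIII$ under the blowup. First I would establish Assumption \ref{ass: ass} for the triple $(\BlIII, L_s^\epsilon, J_\delta)$: for (A$_1$), I would use that the holomorphic $(j_\epsilon, j)$-function $\tilde f$ restricts to a holomorphic function on the complement of $\tilde D$ (the proper transform of $D$) and argue, exactly as in Proposition \ref{prp: Ass}, that Maslov index $0$ disks are constant, while positivity of intersection with the divisors (now including the exceptional curves $E_i$) forces $\mu \geq 0$ on all holomorphic classes; for (A$_2$), I would invoke the fact that away from small neighbourhoods of $p_1, p_2$ the complex structure $J_\delta$ agrees with the pullback of the standard structure (by construction of the neck-stretch diffeomorphism $\varphi$), so the disks missing $B_i(\delta)$ are regular by the $T^{n-1}$-pseudohomogeneity argument of Lemma \ref{lem: almHom}, and the new exceptional disks should be regular by a direct (or analogous $K$-equivariance) check.

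Next I would enumerate the Maslov index $2$ classes on $(\BlIII, L_s^\epsilon)$ admitting holomorphic representatives. The classes $\beta$ and $H_i - \beta - \alpha_i + \alpha_j$ from Proposition \ref{prp: Poten} survive: since the Maslov index $2$ disks of $\Theta^2_s$ avoid $B_1(\delta) \cup B_2(\delta)$ by choice of $\delta$, they persist verbatim under $\varphi$ and contribute the unchanged terms $u + \frac{T}{u}(1+w)(1+\tfrac1w)$. The genuinely new contributions come from the two exceptional curves $E_1, E_2$: each $E_i$ together with a suitable Lagrangian disk in $L_s^\epsilon$ (or rather, the class whose boundary is $\pm\partial\alpha$ and which intersects $E_i$ once) gives a Maslov index $2$ class of symplectic area $1 - \epsilon$. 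I would identify these classes as (something homologous to) $H_i - \alpha + \cdots$ minus $E_i$; tracking areas, one loses $\epsilon$ from each, giving area $1 - \epsilon$, and the boundary classes are $\partial\alpha$ and $-\partial\alpha$, producing the term $T^{1-\epsilon}(w + \tfrac1w)$. The relevant disk count is $1$ for each (the exceptional sphere minus a disk is rigid), with the sign fixed by the chosen spin structure, exactly as in the toric-blowup computation of Fukaya--Oh--Ohta--Ono.

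The main obstacle I expect is \emph{completeness}: showing that no \emph{other} Maslov index $2$ holomorphic disk appears after the blowup — in particular that a disk of $\Theta^2_s$ cannot break off an exceptional sphere to produce a new Maslov index $2$ configuration of different area, and that there are no new simple disks hidden near $p_1, p_2$. This is handled by the neck-stretching / SFT-compactness argument: as the neck length grows, any $J_\delta$-holomorphic disk degenerates into a building whose top level is a punctured disk in $\PxP \setminus (B_1 \cup B_2)$ and whose lower levels are curves in the blowup caps, and a Maslov-index and area bookkeeping (using $\omega_\epsilon(E_i) = \epsilon$ and Proposition \ref{prp:HighMaslovArea} applied downstairs) rules out everything except the listed classes. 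Once the disk classes and their counts are pinned down, the potential formula follows by summing $\eta_\beta z_\beta$ as in \eqref{eq: Pot}, with $z_\beta = T^{\int_\beta\omega}\exp(b\cap\partial\beta)$ giving the stated monomials in $u = z_\beta$ and $w = z_\alpha$.
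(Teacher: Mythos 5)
Your enumeration of the disk classes, their areas, and the resulting potential agrees with the paper, and your treatment of $(A_1)$ and of the ``old'' disks persisting is essentially the paper's. Where you genuinely diverge is on the completeness step. You propose to rule out extra Maslov index $2$ disks by an SFT/neck-stretching compactness argument, degenerating any $J_\delta$-disk into a holomorphic building and doing index/area bookkeeping in the levels. The paper avoids this machinery entirely: it observes that $2(\tilde{D} + E_1 + E_2)$ is Poincar\'e dual to the Maslov class $\mu_{L_s^\delta}$ (via the special-Lagrangian/meromorphic volume form argument of \cite[Lemma~3.1]{Au07}), so a Maslov index $2$ holomorphic disk meets $\tilde{D} + E_1 + E_2$ exactly once; if it meets $E_1$ or $E_2$ it therefore misses $\tilde{D}$, whence $\tilde{f}\circ u$ maps to $\C\setminus\{1\}$ with boundary on $\gamma_s$ and is constant by the maximum principle, reducing the classification to the two explicit disks in a fibre $\tilde{f}^{-1}(c)$; if it misses both $E_i$ it descends to one of the already-classified disks in $\PxP$. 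Both routes should work, but the paper's is elementary and self-contained, whereas yours would require setting up transversality and gluing for the limit buildings and excluding multiply covered components in the caps --- real work that you defer. One further point: for $(A_2)$ you leave the regularity of the two new exceptional disks as a ``direct check''; the paper actually supplies the argument, namely that $\tilde{f}^{-1}(\mathcal{N}_s)$ for a small neighbourhood $\mathcal{N}_s$ of $\gamma_s$ is toric and $(\tilde{f}^{-1}(\mathcal{N}_s), L_s^\delta)$ is $S^1$-pseudohomogeneous for an action transverse to $\del\alpha$, so Lemma \ref{lem: almHom} applies. You should fill in that step rather than assert it.
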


\begin{proof}
  It is enough to compute the $j_\delta$-holomorphic disks with boundary in 
  $L_s^\delta$. The $j_\delta$-holomorphic disks that don't intersect the 
  exceptional divisors $E_1$, $E_2$, corresponds to the holomorphic disks in 
  $\PxP$ with boundary in $\Theta^2_s$, which gives the terms 
  $$ u + \frac{T}{u}(1 + w)(1 + \frac{1}{w})$$ of $\PO^{L_s^\delta}$, and are regular.
 
 Let $\tilde{D}$ be the proper transform of the divisor $D \in \PxP$. It can be
 checked that, twice $\tilde{D} + E_1 + E_2$ is Poincar\'e dual to the Maslov
 class $\mu_{L_s^\delta}$. This implies Assumption \ref{ass: A1}, as in the
 proof of Proposition \ref{prp: Ass}. Moreover, Maslov index 2 disks intersects
 $\tilde{D} + E_1 + E_2$ once. Which means that if a $j_\delta$-holomorphic disk
 $u$ intersects either $E_1$ or $E_2$, by positivity of intersection, it does
 not intersect $\tilde{D}$ and hence $\tilde{f}\circ u: \D \to \C^*$ must be
 constant. There are two Maslov index 2 disks in the fiber $ \tilde{f}^{-1}(c)$,
 for $c \in \gamma_s$. Looking at the intersections with $E_i$, and the proper
 transform of $\{x_i = 0\}$ and $\{y_i = 0 \}$, we can see that the relative classes
 of these disks are $H_1 - E_1 + \alpha$ and $H_2 - E_2 - \alpha$ (for some
 orientation of $\alpha$). Since, $\omega_\epsilon (H_i - E_i \pm \alpha) = 1 -
 \epsilon$, we get the remaining term $$T^{1 - \epsilon}(w + \frac{1}{w}).$$ 
 
 To show regularity of the above disks, one notes that the pre-image under
 $\tilde{f}$ of a small neighbourhood $\mathcal{N}_s$ of $\gamma_s$ contain the
 whole family of the above disks and is actually
 toric. Moreover, $(\tilde{f}^{-1}(\mathcal{N}_s), L_s^\delta)$ is $T^2$-homogeneous
 \cite{EL15b}, or if you will, $S^1$-pseudohomogeneous (Definition \ref{dfn:
 almHomogeneuos}) for a $j_\delta$-holomorphic $S^1$-action transverse to $\del
 \alpha$, which shows Assumption \ref{ass: A2}. 
 
 The choice of spin structure is given by trivialising $TL_s^\epsilon$ according
 to $\{\alpha, \beta\}$ and is so that the evaluation map is orientation
 preserving, as in the proof of Proposition \ref{prp: Poten}. See also
 \cite[Section~5.5]{Vi13} and \cite[Section~8]{Cho04}.
  
\end{proof}

\begin{rmk} The above potential can also be computed by a technique similar to
the one developed in \cite{FO312} and also by some gluing procedure similar to
the one developed in Section 5.2 of the ArXiv.1002.1660v1 version of
\cite{FO312} and in \cite{Wu15}. 
\end{rmk}

\begin{rmk} For each $\delta' > 0$, the family $\{L_s^\epsilon: s \in [1/2, 1 -
\delta'] \}$ can be seen as fibres of an almost toric fibration (ATF) of
$\BlIII$, represented by an almost toric base diagram (ATBD) analogous to the
one in Figure 9 $(A_3)$ of \cite{Vi16a}. In fact, the singular fibration
described by the second diagram in Figure \ref{fig: Bl3} can be thought as a
limit of ATFs described by sliding nodes of the ATBD in Figure 9 $(A_3)$ of \cite{Vi16a}.
Moreover, the potential $\PO^{L_s^\epsilon}$ can be obtained
from the toric potential $$ \PO^{\mathrm{toric}} = u_1 + u_2 + \frac{T}{u_1} +
\frac{T}{u_2} + \frac{T^{1 - \epsilon}u_1}{u_2} + \frac{T^{1 -
\epsilon}u_2}{u_1},$$ via wall-crossing transformation $u = u_1(1 + w)$, $w =
u_2/u_1$, giving another example where actual computations meet wall-crossing predictions
\cite{Au07,Au09,Vi13}. 
 
\end{rmk}

Let $\fs \in C^2(\BlIII)$ be the cocycle Poincar\'e dual to $\{y_1 = 0\} \cup E_1$, 
so $[\fs] = H_1 - E_2 + E_1$. Analogous to Proposition \ref{prp: Bulk def Poten}, we
have:

\begin{prp} \label{prp: bulkdef potBl3}
  The potential for $L_s^\epsilon$, bulk deformed by the cocycle $\bb = T^{\rho}\fs
  \in C^2(\BlIII, \Lambda_+)$ is given by:  
  
  \begin{equation} \label{eq: PotBulked Bl3}
    \PO^{L_s^\epsilon}_{\bb} = u + \frac{T}{u}(1 + w)(e^{T^\rho} + \frac{1}{w}) + T^{1 - 
    \epsilon}(e^{T^\rho}w + \frac{1}{w}).
  \end{equation}
  
\end{prp}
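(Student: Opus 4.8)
The plan is to argue exactly as in Proposition \ref{prp: Bulk def Poten}, using the list of Maslov index $2$ $J_\delta$-holomorphic disk classes already produced in Lemma \ref{lem: potBl3}. Since $\fs$ is a degree $2$ cocycle, the bulk deformation does not alter the set of contributing disks, their regularity, or the validity of Assumption \ref{ass: ass} (this is the point of the discussion surrounding \eqref{eq: PotBulked}); it merely multiplies the monomial $z_\gamma$ attached to a disk class $\gamma$ by $\exp[(\fs\cap\gamma)T^{\rho}]$. So the whole content of the proof is to compute the intersection number of each Maslov $2$ class with the cycle $\{y_1=0\}\cup E_1$, i.e.\ to pair each class with $[\fs]=H_1-E_2+E_1$, and then to reassemble $\PO^{L_s^\epsilon}$ from Lemma \ref{lem: potBl3} with these factors inserted.

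Concretely, I would run through the three groups of disks. First, the disk in class $\beta$ avoids both $\{y_1=0\}$ and $E_1$ (it lies in a fibre $\tilde f^{-1}(c)$ over $c\in\gamma_s$ and misses $p_1,p_2$), so $\fs\cap\beta=0$ and the term $u$ is unchanged. Second, the four transforms of the $\PxP$-disks, with classes $H_i-\beta-\alpha_i+\alpha_j$ ($i,j\in\{1,2\}$, $\alpha_1=\alpha$, $\alpha_2=0$), miss $E_1,E_2$ and intersect $\{y_i=0\}$ according to $H_j\cdot\{y_i=0\}=\delta_{ij}$, $\beta\cdot\{y_i=0\}=\alpha\cdot\{y_i=0\}=0$; pairing with $[\fs]=H_1-E_2+E_1$ gives intersection $0$ for the classes containing $H_1$ and $1$ for those containing $H_2$, which turns $\frac{T}{u}(1+w)(1+\frac1w)$ into $\frac{T}{u}\bigl[(1+\frac1w)+e^{T^\rho}(1+w)\bigr]=\frac{T}{u}(1+w)(e^{T^\rho}+\frac1w)$. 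Third, for the new disks, with classes $H_1-E_1+\alpha$ and $H_2-E_2-\alpha$, one uses $E_i^2=-1$, $E_1\cdot E_2=0$, $H_i\cdot E_j=0$ to get $(H_1-E_1+\alpha)\cdot(H_1-E_2+E_1)=1$ and $(H_2-E_2-\alpha)\cdot(H_1-E_2+E_1)=0$, so the term $T^{1-\epsilon}(w+\frac1w)$ becomes $T^{1-\epsilon}(e^{T^\rho}w+\frac1w)$. Summing the three groups yields \eqref{eq: PotBulked Bl3}.

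The main obstacle is the bookkeeping of intersections with the exceptional divisor: one must be careful about which Maslov $2$ disks (and which proper transforms of the divisors $\{y_i=0\}$, $\overline{f^{-1}(1)}$) meet $E_1$ and $E_2$, and about the precise homology class $[\fs]=H_1-E_2+E_1$ of the bulk cycle under the blow-up; this is where a sign error would propagate. Once the intersection numbers above are confirmed, everything else is a routine substitution into \eqref{eq: PotBulked}, parallel to the proof of Proposition \ref{prp: Bulk def Poten}, with regularity and Assumption \ref{ass: ass} carried over verbatim from Lemma \ref{lem: potBl3}.
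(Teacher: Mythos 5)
Your proof is correct and is essentially the paper's own argument: the paper offers nothing beyond ``analogous to Proposition~\ref{prp: Bulk def Poten}'', and your intersection numbers ($\fs\cap\beta=0$; $0$ for the old disks containing $H_1$ and $1$ for those containing $H_2$; $\fs\cap(H_1-E_1+\alpha)=1$ and $\fs\cap(H_2-E_2-\alpha)=0$) reproduce \eqref{eq: PotBulked Bl3} exactly. The only caveat is one you already flag yourself: these numbers follow from the stated class $[\fs]=H_1-E_2+E_1$, which (given $H_j\cdot\{y_i=0\}=\delta_{ij}$ and $p_2\in\{y_2=0\}$) is really the class of the proper transform of $\{y_2=0\}$ union $E_1$ rather than of $\{y_1=0\}\cup E_1$ as literally written; this labelling slip is the paper's, not yours, and does not affect the subsequent critical-point computation.
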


We can then compute the critical points of $\PO^{L_s^\epsilon}_{\bb}$ and obtain:

\begin{lem} \label{lem: crit PotBl3}
  We have that $w = - e^{\frac{-T^\rho}{2}}$ and $u = \pm T^{\frac{1}{2}}(1 
  - e^{\frac{-T^\rho}{2}})^{\frac{1}{2}}(e^{T^\rho - 
  e^{\frac{T^\rho}{2}}})^{\frac{1}{2}}$ are critical points of 
  $\PO^{L_s^\epsilon}_{\bb}$. The valuations of $w$ and $u$ are respectively
  $0$ and $1/2 + \rho$.
  
\end{lem}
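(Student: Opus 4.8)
The plan is to solve directly the two critical-point equations of Definition~\ref{dfn: crit Point} for the bulk-deformed potential \eqref{eq: PotBulked Bl3}, in the variables $u = z_\beta$ and $w = z_\alpha$. First I would expand $(1+w)\bigl(e^{T^\rho} + \tfrac1w\bigr) = e^{T^\rho} + 1 + \tfrac1w + e^{T^\rho}w$, so that
\[ \PO^{L_s^\epsilon}_{\bb} = u + \frac{T}{u}\Bigl(e^{T^\rho} + 1 + \frac1w + e^{T^\rho}w\Bigr) + T^{1-\epsilon}\Bigl(e^{T^\rho}w + \frac1w\Bigr), \]
and then compute the logarithmic derivatives. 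The point is that $\partial_w$ of both the $\tfrac{T}{u}(\cdots)$ term and the $T^{1-\epsilon}(\cdots)$ term is proportional to $e^{T^\rho} - \tfrac1{w^2}$, so
\[ w\,\partial_w\PO^{L_s^\epsilon}_{\bb} = \Bigl(e^{T^\rho}w - \frac1w\Bigr)\Bigl(\frac{T}{u} + T^{1-\epsilon}\Bigr), \qquad u\,\partial_u\PO^{L_s^\epsilon}_{\bb} = u - \frac{T}{u}(1+w)\Bigl(e^{T^\rho} + \frac1w\Bigr). \]
A pair $(u,w)$ for which $\tfrac{T}{u} + T^{1-\epsilon} \ne 0$ is then a critical point exactly when $w^2 = e^{-T^\rho}$ and $u^2 = T(1+w)\bigl(e^{T^\rho} + \tfrac1w\bigr)$.

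Next I would take the root $w = -e^{-T^\rho/2}$, for which $1 + w = 1 - e^{-T^\rho/2}$ and $e^{T^\rho} + \tfrac1w = e^{T^\rho} - e^{T^\rho/2}$; pulling $e^{-T^\rho/2}$, resp.\ $e^{T^\rho/2}$, out of the two factors shows $(1 - e^{-T^\rho/2})(e^{T^\rho} - e^{T^\rho/2}) = (e^{T^\rho/2}-1)^2$, so $u^2 = T(e^{T^\rho/2}-1)^2$ and
\[ u = \pm T^{1/2}(1 - e^{-T^\rho/2})^{1/2}(e^{T^\rho} - e^{T^\rho/2})^{1/2} = \pm T^{1/2}(e^{T^\rho/2}-1). \]
Writing $v$ for the valuation: $w = -e^{-T^\rho/2}$ has constant term $-1$, so $v(w) = 0$, and $e^{T^\rho/2}-1 = \tfrac12 T^\rho + O(T^{2\rho})$ has $v = \rho$, whence $v(u) = \tfrac12 + \rho$. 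Taking $\rho = s - \tfrac12$ (as in Corollary~\ref{cor: CritcPotTsm}) this gives $v(u) = s$, exactly the valuation of $z_\beta = T^s\exp(b\cap\partial\beta)$ for a weak bounding cochain $b \in H^1(L_s^\epsilon,\Lambda_0)$; together with $v(w) = 0$ this lets one feed $(u,w)$ into Corollary~\ref{cor: PotFloerHom}.

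It remains to explain why the minus sign is the relevant one. The other root $w = +e^{-T^\rho/2}$ yields, from the same $u$-equation, $u^2 = T(1+e^{-T^\rho/2})(e^{T^\rho}+e^{T^\rho/2}) = T(e^{T^\rho/2}+1)^2$, so $v(u) = \tfrac12$; but $z_\beta = T^s\exp(b\cap\partial\beta)$ has valuation $s > \tfrac12$ once $s \in (\tfrac12,1)$, so only $w = -e^{-T^\rho/2}$ is realized by an honest weak bounding cochain on $L_s^\epsilon$ in that range. There is no substantial obstacle in the argument — it is a direct computation — the only points worth a word of care being the non-vanishing of $\tfrac{T}{u} + T^{1-\epsilon}$ (its two summands have distinct valuations unless $\epsilon = \tfrac12$, and then a comparison of leading coefficients, $\pm 1$ versus $1$, rules out cancellation) and the selection of the correct branch via this valuation constraint.
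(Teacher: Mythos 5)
Your computation is correct and is exactly the direct verification the paper intends (it states the lemma as the outcome of a computation, omitting details, in parallel with its proof of Lemma~\ref{lem: Critical Pts}): the factorization $w\,\partial_w\PO^{L_s^\epsilon}_{\bb} = \bigl(e^{T^\rho}w - \tfrac1w\bigr)\bigl(\tfrac{T}{u} + T^{1-\epsilon}\bigr)$ makes the first factor vanish identically at $w = -e^{-T^\rho/2}$, and $u^2 = T(e^{T^\rho/2}-1)^2$ gives valuation $\tfrac12+\rho$, matching the statement once one reads the paper's $(e^{T^\rho - e^{T^\rho/2}})^{1/2}$ as the evident typo for $(e^{T^\rho} - e^{T^\rho/2})^{1/2}$, as you implicitly did. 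Your side remarks on branch selection via $v(z_\beta)=s$ and on the non-vanishing of $\tfrac{T}{u}+T^{1-\epsilon}$ are not needed for the lemma as stated (the first factor already vanishes), and the threshold there is $\epsilon = \tfrac12+\rho$ rather than $\epsilon=\tfrac12$, but this does not affect the argument.
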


Since we have that $\int_{\beta}\omega_\epsilon = s$ and $\int_{\alpha} \omega_\epsilon 
= 0$: 

\begin{lem} \label{lem: crit2 PotBl3} For $s > 1/2$ and $\bb_s^\epsilon = T^{s -
1/2}[\fs]$, there exists a weak bounding cochain $b_s^\epsilon \in
H^1(L_s^\epsilon, \Lambda_0)$ which is a critical point of
$\PO^{L_s^\epsilon}_{\bb_s^\epsilon}$. 
\end{lem}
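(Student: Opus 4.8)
The plan is to promote the formal critical point of Lemma~\ref{lem: crit PotBl3} to an honest weak bounding cochain, exactly as Corollary~\ref{cor: CritcPotTsm} is obtained from Lemmas~\ref{lem: Critical Pts} and~\ref{lem: val=1/2} in the $\Pm$ case.

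First I would set $\rho = s - \tfrac12$. Since $s > \tfrac12$ this is strictly positive, so $\bb_s^\epsilon = T^{s-1/2}[\fs]$ is indeed a class in $H^2(\BlIII, \Lambda_+)$, and Lemma~\ref{lem: crit PotBl3}, applied with this value of $\rho$, supplies a solution $(w_0, u_0)$ of the critical point equations for $\PO^{L_s^\epsilon}_{\bb_s^\epsilon}$ in which $w_0$ has valuation $0$ and $u_0$ has valuation $\tfrac12 + \rho = s$.

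Next I would recall that, by the trivialisation of $TL_s^\epsilon$ fixing the spin structure in Lemma~\ref{lem: potBl3}, the classes $\partial\alpha, \partial\beta$ form a $\Z$-basis of $H_1(L_s^\epsilon;\Z)$, and that in the coordinates~\eqref{def: coord z} one has $z_\alpha(L_s^\epsilon, b) = \exp(b\cap\partial\alpha)$ and $z_\beta(L_s^\epsilon, b) = T^{s}\exp(b\cap\partial\beta)$, using $\int_\alpha\omega_\epsilon = 0$ and $\int_\beta\omega_\epsilon = s$. Both $w_0$ and $T^{-s}u_0$ have valuation $0$, hence are units of $\Lambda_0$; working over $\C$, every valuation-$0$ element of $\Lambda_0$ lies in the image of $\exp\colon \Lambda_0 \to \Lambda_0$, so I may choose $c_\alpha, c_\beta \in \Lambda_0$ with $\exp(c_\alpha) = w_0$ and $\exp(c_\beta) = T^{-s}u_0$. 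Because $\partial\alpha, \partial\beta$ is a basis, there is a unique $b_s^\epsilon \in H^1(L_s^\epsilon, \Lambda_0)$ with $b_s^\epsilon\cap\partial\alpha = c_\alpha$ and $b_s^\epsilon\cap\partial\beta = c_\beta$; it is automatically a weak bounding cochain, since $(\BlIII, L_s^\epsilon, J_\delta)$ satisfies Assumption~\ref{ass: ass} by Lemma~\ref{lem: potBl3}, so $H^1(L_s^\epsilon, \Lambda_0)$ embeds into $\hat{\mM}(L_s^\epsilon)$. By construction $z_\alpha(L_s^\epsilon, b_s^\epsilon) = w_0$ and $z_\beta(L_s^\epsilon, b_s^\epsilon) = u_0$, so $b_s^\epsilon$ realises the critical point of Lemma~\ref{lem: crit PotBl3} and is therefore a critical point of $\PO^{L_s^\epsilon}_{\bb_s^\epsilon}$ in the sense of Definition~\ref{dfn: crit Point}.

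The only delicate point — and precisely what breaks down at $s = \tfrac12$, where $L_{1/2}^{1/2}$ has to be handled as a monotone Lagrangian instead — is the valuation bookkeeping: one needs the valuation of $u_0$ to equal $s$ exactly, so that $T^{-s}u_0$ is a unit of $\Lambda_0$ and can serve as $\exp(b_s^\epsilon\cap\partial\beta)$, and this is exactly where the identity $\mathrm{val}(u_0) = \tfrac12 + \rho$ from Lemma~\ref{lem: crit PotBl3} is combined with $\rho = s - \tfrac12$. Beyond this matching of valuations I do not expect any further obstacle; everything else is the same linear bookkeeping already used in the $\Pm$ case.
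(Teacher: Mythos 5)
Your proposal is correct and follows essentially the same route as the paper: the paper's justification is precisely the valuation bookkeeping $\int_\beta\omega_\epsilon = s$, $\int_\alpha\omega_\epsilon = 0$ combined with the valuations $0$ and $\tfrac12+\rho$ from Lemma \ref{lem: crit PotBl3}, with $\rho = s-\tfrac12$, mirroring how Corollary \ref{cor: CritcPotTsm} is deduced in the $\Pm$ case. You merely spell out the (correct, implicit) steps about surjectivity of $\exp$ onto valuation-zero units of $\Lambda_0$ and the embedding $H^1(L_s^\epsilon,\Lambda_0)\hookrightarrow\hat{\mM}(L_s^\epsilon)$.
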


Following similar arguments as in Sections \ref{sec: Proof} and \ref{sec: Quasimorph}, we 
are able to prove Theorem \ref{thm: Bl3heavy} and consequently Theorem \ref{thm: 
Bl3}. \qed

\bibliographystyle{abbrv}
\bibliography{SympRefs}

\end{document}